\newcommand{\stkout}[1]{\ifmmode\text{\sout{\ensuremath{#1}}}\else\sout{#1}\fi}
\colorlet{Changes@Color}{red}
\numberwithin{equation}{section}
\newtheorem{prop}{Proposition}
\newtheorem{lemma}[prop]{Lemma}
\newtheorem{thm}[prop]{Theorem}
\newtheorem*{thm*}{Theorem}
\newtheorem{cor}[prop]{Corollary}
\numberwithin{prop}{section}
\newtheorem{defn}[prop]{Definition}
\newtheorem*{defn*}{Definition}
\theoremstyle{definition}
\newtheorem{ex}[prop]{Example}
\newtheorem*{ex*}{Example}
\newtheorem{rmk}[prop]{Remark}
\definecolor{c1}{rgb}{0.2,0.4,0.5}
\definecolor{c2}{rgb}{0.1,0.3,0.5}
\definecolor{c3}{rgb}{0.2,0.7,0.5}
\def \k {K\"ahler }
\def \ke {K\"ahler--Einstein }
\newcommand{\oo}[1]{\overline{#1}}
\newcommand{\bC}{\mathbb{C}}
\newcommand{\dbar}{\oo\partial}
\def\Ol{\overline}
\DeclareFontFamily{U}{MnSymbolC}{}
\DeclareSymbolFont{MnSyC}{U}{MnSymbolC}{m}{n}
\DeclareFontShape{U}{MnSymbolC}{m}{n}{
	<-6>  MnSymbolC5
	<6-7>  MnSymbolC6
	<7-8>  MnSymbolC7
	<8-9>  MnSymbolC8
	<9-10> MnSymbolC9
	<10-12> MnSymbolC10
	<12->   MnSymbolC12}{}
\DeclareMathSymbol{\intprod}{\mathbin}{MnSyC}{'270}
\DeclareMathOperator{\Ric}{Ric}
\DeclareMathOperator{\End}{End}
\DeclareMathOperator{\rank}{Rank}
\DeclareMathOperator{\Vol}{Vol}
\DeclareMathOperator{\Gl}{Gl}
\DeclareMathOperator{\Iso}{Iso}
\begin{document}

\title[]{K\"ahler--Einstein metrics and obstruction flatness II: Unit Sphere bundles}

\begin{abstract} 
This paper concerns obstruction flatness of hypersurfaces $\Sigma$ that arise as unit sphere bundles $S(E)$ of Griffiths negative Hermitian vector bundles $(E, h)$ over \k manifolds $(M, g).$ We prove that if the curvature of $(E, h)$  satisfies a splitting condition and $(M,g)$ has constant Ricci eigenvalues, then $S(E)$ is obstruction flat. If, in addition, all these eigenvalues are strictly less than one and $(M,g)$ is complete, then we show that the corresponding ball bundle admits a complete K\"ahler-Einstein metric.

\end{abstract}

\subjclass[2020]{32W20 32V15 32Q20}


\author [Ebenfelt]{Peter Ebenfelt}
\address{Department of Mathematics, University of California at San Diego, La Jolla, CA 92093, USA} \email{{pebenfelt@ucsd.edu}}

\author[Xiao]{Ming Xiao}
\address{Department of Mathematics, University of California at San Diego, La Jolla, CA 92093, USA}
\email{{m3xiao@ucsd.edu}}

\author [Xu]{Hang Xu}
\address{School of Mathematics (Zhuhai), Sun Yat-sen University, Zhuhai, Guangdong 519082, China}
\email{{xuhang9@mail.sysu.edu.cn}}

\thanks{The first author was supported in part by the NSF grant DMS-1900955 and DMS-2154368. The second author was supported in part by the NSF grants DMS-1800549 and DMS-2045104. The third author was supported in part by the NSFC grant No. 12201040.}

\maketitle

\section{Introduction}

In this paper, the authors continue their investigation of K\"ahler--Einstein metrics and obstruction flatness in the context of domains in vector bundles. In a recent paper \cite{EXX22} the authors studied obstruction flatness of CR hypersurfaces that arise as the unit circle bundle of a negative Hermitian line bundle over a \k manifold. The authors proved, among other results, that for a negative line bundle $(L, h)$ over a complex manifold $M$, if the \k metric $g$ induced by $-\Ric(L, h)$ has constant Ricci eigenvalues, then the unit circle bundle $S(L)$ is obstruction flat. If, in addition, all the Ricci eigenvalues are strictly less than $1$, then the disk bundle admits a complete \ke metric. It is natural to consider also the more general case of unit sphere bundles in Hermitian vector bundles of higher rank. The goal of this paper is to find the right conditions on a vector bundle that will guarantee obstruction flatness of the corresponding sphere bundle. This turns out to be more subtle than the line bundle case (cf., e.g., Remark \ref{rem:Webster} below and the penultimate paragraph of this introduction) and we need to pose an additional condition on the curvature, beyond the conditions on
the Ricci curvature that we pose in the line bundle case, as will be explained below.

We begin by introducing the notion of obstruction flatness in its classical context. On a smoothly bounded strongly pseudoconvex domain $\Omega\subset \mathbb{C}^n$, $n\geq 2$, the existence of a complete \ke metric on $\Omega$ is governed by the following Dirichlet problem for Fefferman's complex Monge-Amp\`ere equation:
\begin{equation}\label{Dirichlet problem}
	\begin{dcases}
		J(u):=(-1)^n \det \begin{pmatrix}
			u & u_{\oo{z_k}}\\
			u_{z_j} & u_{z_j\oo{z_k}} \\
		\end{pmatrix}=1 & \mbox{in } \Omega\\
		u=0 & \mbox{on } \partial\Omega
	\end{dcases}
\end{equation}
with $u>0$ in $\Omega$. If $u$ is a solution of \eqref{Dirichlet problem}, then $-\log u$ is the \k potential of a complete \ke metric on $\Omega$ with negative Ricci curvature.
Fefferman \cite{Fe2} established the existence of an approximate solution $\rho\in C^{\infty}(\oo{\Omega})$ of \eqref{Dirichlet problem} that only satisfies $J(\rho)=1+O(\rho^{n+1})$, and showed that such a $\rho$ is unique modulo $O(\rho^{n+2})$. Such an approximate solution $\rho$ is often referred to as a {\em Fefferman defining function}. Cheng and Yau \cite{CheYau} then proved the existence and uniqueness of an exact solution $u\in C^{\infty}(\Omega)$ to \eqref{Dirichlet problem}, now called the Cheng--Yau solution. Lee and Melrose \cite{LeeMel82} showed that the Cheng--Yau solution has the following asymptotic expansion:
\begin{equation}\label{Lee-Melrose expansion}
	u\sim \rho\sum_{k=0}^{\infty} \eta_k\bigl(\rho^{n+1}\log\rho\bigr)^k,
\end{equation}
where each $\eta_k\in C^{\infty}(\oo{\Omega})$ and $\rho$ is a Fefferman defining function.
The expansion \eqref{Lee-Melrose expansion} shows that, in general, the Cheng--Yau solution $u$ can only be expected to have a finite degree of boundary smoothness; namely, $u\in C^{n+2-\varepsilon}(\oo{\Omega})$ for any $\varepsilon>0$. Graham \cite{Graham1987a} showed that the obstruction to $C^{\infty}$ boundary regularity of the Cheng--Yau solution is in fact given by the lowest order obstruction $\eta_1|_{\partial\Omega}$, the restriction of $\eta_1$ to the boundary. More precisely, in \cite{Graham1987a} Graham proved that if $\eta_1|_{\partial\Omega}$ vanishes identically (on $\partial\Omega$), then every $\eta_k$ vanishes to infinite order on $\partial\Omega$ for all $k\geq 1$. For this reason, $\eta_1|_{\partial\Omega}$ is called the \emph{obstruction function}.
Graham also showed ({\em op.\ cit.}) that, for any $k\geq 1$, the coefficients $\eta_k$ mod $O(\rho^{n+1})$ are independent of the choice of Fefferman defining function $\rho$ and are locally determined by the local CR geometry of $\partial\Omega$.
As a consequence, the $\eta_k$ mod $O(\rho^{n+1})$, for $k\geq 1$, are local CR invariants that can be defined on any strongly pseudoconvex CR hypersurface in a complex manifold. In particular, the obstruction function $\mathcal{O}=\eta_1|_{\partial\Omega}$ is a local CR invariant that can be defined on any strongly pseudoconvex CR hypersurface $\Sigma$.
If $\Sigma$ is a CR hypersurface for which the obstruction function $\mathcal{O}$ vanishes identically, then $\Sigma$ is said to be \emph{obstruction flat}. The most basic examples of obstruction flat hypersurfaces are the sphere $\{z\in \mathbb{C}^n: |z|=1 \}$ and, more generally, any CR hypersurface that is locally CR diffeomorphic to an open piece of the sphere; a CR hypersurface that is locally CR diffeomorphic to an open piece of the sphere in a neighborhood of any point is called {\em spherical}.

As mentioned above, the main aim of this paper is to extend the authors' results in \cite{EXX22} concerning obstruction flatness of unit circle bundles in negative Hermitian line bundles over \k manifolds to the more general situation of sphere bundles in Hermitian vector bundles of higher rank. To formulate our results, we first review some standard facts and notions concerning the geometry of Hermitian vector bundles and \k manifolds. Let $(E, h)$ be a Hermitian (holomorphic) vector bundle over a complex manifold $M$. Denote by $\pi: E\rightarrow M$ the canonical projection and by $E_z=\pi^{-1}(z)$ the fiber at $z$. Let $\Theta=\Theta_{E, h}$ be the associated curvature form of the Chern connection of $(E, h)$; thus, $\Theta$ is an $\End(E)$-valued $(1,1)$-form. At each point $z\in M$, the tensor $\Theta_z=\Theta(z)$ can be regarded as a Hermitian bilinear form on $E_z\otimes T^{1,0}_z$. We make the following definition, which will be used in the main results.

\begin{defn}\label{def curvature split} {\rm
	The curvature $\Theta$ {\em splits} if, at every $z\in M$, there exists a Hermitian form $H_z$ on $T_z^{1,0}M$ such that $\Theta_z=h\cdot H_z$; or, equivalently,  for any $e\in E_z$ and any $v \in T_z^{1,0}M$,
	\begin{equation}\label{curvature splits}
		\Theta_z(e\otimes v, e\otimes v)=h(e,e) \cdot H_z(v, v).
	\end{equation}	
}
\end{defn}

In addition, we say that the vector bundle $(E, h)$ is {\em curvature split} if its curvature $\Theta$ splits. We remark that when this is the case, $H_z$ is equal to the Ricci curvature of $(E, h)$ at $z$ up to a scaling factor. See the paragraph before \eqref{H is the Ricci curvature} in $\S$\ref{Sec preliminaries}.

When the Hermitian vector bundle $(E,h)$ is Griffiths negative, i.e., $\sqrt{-1}\Theta_z(e\otimes v, e\otimes v)<0$ for all non-zero $e\in E_z$, $v\in T_z^{1,0}M$ and $z\in M$,  the negative of its Ricci, $-\Ric(E, h)$,  induces a K\"ahler metric $g$ on $M$. By Lemma 1.2.2 in Mok-Ng \cite{MoNg}, the corresponding sphere bundle $S(E)=\{e \in E: |e|_h=1 \}$ is strongly pseudoconvex; here, $|e|_h=\sqrt{h(e,e)}$ denotes the norm of $e$ with respect to the metric $h$.

Given an $n$-dimensional \k manifold $(M, g)$, let $\mathrm{Ric}=-i\partial \Ol{\partial} \log \det(g)$ denote the associated Ricci tensor.  The latter naturally induces an endomorphism, the {\em Ricci endomorphism}, of the holomorphic tangent space $T_z^{1,0}M$ given by $\mathrm{Ric} \cdot g^{-1}$ for $z\in M$.
The eigenvalues of this endomorphism will be referred to as
the {\em Ricci eigenvalues} of $(M,g)$ and, by design, are functions of $z\in M$. 
All Ricci eigenvalues are real-valued as both  $\mathrm{Ric}$ and $g$ are Hermitian tensors.
For a fixed $z$, we label the Ricci eigenvalues such that $\lambda_1(z) \leq \cdots \leq \lambda_n (z)$.
Note that the sum of the $\lambda_i(z)$, i.e., the trace of the Ricci endomorphism, gives the scalar curvature at $z$. 
The \k manifold $(M, g)$ is said to have {\em constant Ricci eigenvalues}, if each $\lambda_i(z)$, for $1 \leq i \leq n$, is a constant function on $M$; equivalently, the characteristic polynomial of the Ricci endomorphism, $\mathrm{Ric} \cdot g^{-1}: T_z^{1,0}M \rightarrow T_z^{1,0}M$ is the same at every point $z \in M$.

Our first main result is as follows.
\begin{thm}\label{thm obstruction flat}
	Let $(E, h)$ be a Hermitian holomorphic vector bundle over a complex manifold $M$. Suppose $(E, h)$ is Griffiths negative and its curvature splits. Let $g$ be the \k metric on $M$ induced by $-\Ric(E, h)$. If $(M, g)$ has constant Ricci eigenvalues, then $S(E)$ is obstruction flat.
\end{thm}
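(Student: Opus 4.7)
The plan is to construct, in a local trivialization of $E$, a formal all-orders Fefferman defining function for $S(E)$ with no logarithmic terms in the Lee--Melrose expansion \eqref{Lee-Melrose expansion}; by Graham's theorem cited above, this is equivalent to the obstruction function $\mathcal{O}$ vanishing on $S(E)$. Since $\mathcal{O}$ is a local CR invariant, working in a single local trivialization suffices.

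Fix a local trivialization of $E$ over a coordinate chart $U\subset M$, with base coordinates $z=(z^1,\ldots,z^n)$ and fiber coordinates $\xi=(\xi^1,\ldots,\xi^r)$. Let $t:=h_{\alpha\bar\beta}(z)\xi^\alpha\bar\xi^\beta$, so that $\rho_0:=1-t$ is a local defining function for $S(E)$ that is positive on the ball bundle. First, compute $J(\rho_0)$ by decomposing the bordered Hessian into fiber and base blocks and applying Schur's formula. The Chern curvature identity, combined with the splitting hypothesis $\Theta_{\alpha\bar\beta,i\bar j}=h_{\alpha\bar\beta}H_{i\bar j}$ and the relation $g=-rH$ (obtained by tracing the splitting over the bundle indices), produces a striking cancellation: the off-diagonal fiber/base blocks precisely cancel the non-curvature (``connection'') piece of the base--base block, and one is left with
\[
J(\rho_0)=(t/r)^n\,\det(h_{\alpha\bar\beta})\,\det(g_{i\bar j}).
\]

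Next, refine $\rho_0$ to a formal Fefferman defining function to all orders. Using the fiberwise $U(r)$-symmetry, look for an approximate solution $u(z,\xi)=\rho_0\,\Phi(z,\rho_0)$ with $\Phi(z,\rho_0)=\sum_{k\ge 0}\Phi_k(z)\rho_0^k$ a formal power series. Substituting into $J(u)=1$ and expanding in $\rho_0$, one obtains at each order $\rho_0^k$ an equation for $\Phi_k(z)$ whose inhomogeneity is built from the lower-order $\Phi_j$ and from invariants of the Ricci endomorphism $\Ric\cdot g^{-1}$ of $(M,g)$. The zeroth-order equation determines $\Phi_0(z)$ algebraically in terms of $\det(h_{\alpha\bar\beta})\det(g_{i\bar j})$; the explicit form above, together with the splitting, makes the higher-order equations algebraic and recursively solvable for $0\le k\le n+r$. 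The obstruction $\mathcal{O}|_{S(E)}$ arises as the solvability condition at order $\rho_0^{n+r+1}$.

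Under the constant Ricci eigenvalue hypothesis, every elementary symmetric polynomial in $\lambda_1,\ldots,\lambda_n$ is a constant on $M$. After the coordinate-dependent piece is absorbed into $\Phi_0$, the recursive expressions for the higher $\Phi_k$ become polynomials in these constants, and the order-$(n+r+1)$ compatibility condition reduces to a universal polynomial identity in $\lambda_1,\ldots,\lambda_n$. The main obstacle is to verify this identity: either by reducing, through the splitting, to the rank-one identity resolved in the line bundle case of \cite{EXX22}, or by a direct computation exploiting the explicit formula for $J(\rho_0)$ above. The Schur-complement simplification of the second paragraph is the structural key that enables either reduction and explains why the splitting condition, rather than Ricci eigenvalue conditions alone, is the appropriate hypothesis in higher rank.
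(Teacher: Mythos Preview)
Your proposal is a plausible strategy, but it is not a proof: the central step---verifying the compatibility condition at order $\rho_0^{m+1}$ (with $m=n+r$)---is explicitly left undone. You say ``the main obstacle is to verify this identity'' and then offer two possible routes without executing either. Neither route is easy. The reduction to the rank-one identity of \cite{EXX22} is not available in any direct way: the splitting hypothesis simplifies the curvature block but does not reduce the Monge--Amp\`ere recursion to the line bundle recursion, because the fiber block now has rank $r$ and contributes the extra factor $(y-\tfrac{2r}{m+1})^{r-1}$ to the characteristic polynomial governing the problem. The ``direct computation'' route would require you to actually write down the order-$(m+1)$ coefficient and check it vanishes, which you have not attempted. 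As written, this is an outline of where the difficulty lies, not a resolution of it.

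The paper takes a different and more decisive route: rather than building a formal Fefferman defining function order by order and checking that the first log obstruction vanishes, it constructs an \emph{exact} smooth solution to $J(u)=1$ with $u=0$ on $S(E)$, which by Graham's theorem immediately gives obstruction flatness. The construction is an ODE reduction. One sets $u=k^{n/(m+1)}(GH)^{-1/(m+1)}\phi(|w|_h)$, where $G=\det(g_{i\bar j})$, $H=\det(h_{\alpha\bar\beta})$, and $\phi$ is a function of the single radial variable $r=|w|_h$. The constant Ricci eigenvalue hypothesis makes the characteristic polynomial $P(y)=(y-\tfrac{2k}{m+1})^{k-1}\prod_i(y-\tfrac{2k\lambda_i}{m+1})$ independent of the base point; one then solves a first-order ODE for an auxiliary function $Z(r)$ built from $P$, and defines $\phi$ algebraically in terms of $Z$. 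The verification that $J(u)=1$ is carried out by choosing, at each point, a normal frame for $h$ (so $h_{\alpha\bar\beta}=\delta_{\alpha\beta}$ and $dh_{\alpha\bar\beta}=0$ at the point), which block-diagonalizes the complex Hessian of $-\log u$ into a base block and a fiber block; the splitting hypothesis is exactly what makes the base block equal to $-\tfrac{1}{m+1}R_{i\bar j}+\tfrac{Y}{2k}g_{i\bar j}$ with $Y=1/Z$, and the two determinants combine to $P(Y)Y'/(2^{m+1}k^nX^{2k-1})$, which matches $u^{-(m+1)}/(GH)$ by the defining ODE for $Z$. This bypasses the compatibility-condition check entirely.

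Your Schur-complement observation is morally the same simplification the paper exploits via normal frames, and your instinct that the splitting hypothesis is what makes the base block tractable is correct. But the paper's ODE ansatz converts the problem from ``verify a polynomial identity at the critical order'' into ``solve a scalar ODE and check one determinant identity,'' which is both cleaner and actually carried out.
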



Recall that the notion of obstruction flatness originates in the context of complete \ke metrics on domains. It is natural to ask whether a complete \ke metric exists (globally) on the corresponding ball bundle $B(E)=\{e \in E: |e|_h <1 \}$ of $(E, h)$ in the situations we are considering. We shall prove the following result:
\begin{thm}\label{thm ke metric}
	Let $n\geq 1$, $k\geq 1$ and $m=n+k$. Let $M$ be a complex manifold of dimension $n$ and $(E, h)$ a Hermitian holomorphic vector bundle over $M$ of rank $k$ such that $(E, h)$ is Griffiths negative and its curvature splits. Let $g$ be the \k metric on $M$ induced by $-\Ric(E, h)$. Assume $(M, g)$ is complete and has constant Ricci eigenvalues. If all the Ricci eigenvalues are strictly less than $1$, then the ball bundle $B(E)$ admits a unique complete \ke metric $\widetilde{g}$ with Ricci curvature equal to $-(m+1)$. Moreover, this metric is induced by the following \k form
	\begin{equation}\label{thm ke metric eq}
		\widetilde{\omega}(w, \oo{w}):=-\frac{1}{m+1}\pi^*(\Ric)|_w+\frac{1}{m+1}\pi^*(\omega)|_w-i\partial\dbar\log\phi(|w|_h),
	\end{equation}
	where $\omega$ and $\Ric$ are respectively the \k and the Ricci form of $(M,g)$, $\pi: E \rightarrow M$ the canonical fiber projection of the vector bundle. Moreover, $\phi: (-1, 1) \rightarrow \mathbb{R}^+$ is an even real analytic function that depends only on the characteristic polynomial of the Ricci endomorphism of $(M, g)$. (More precisely, $\phi$ is given by Proposition $\ref{prpnzh2}$ by choosing $\lambda_i$'s to be the Ricci eigenvalues).
\end{thm}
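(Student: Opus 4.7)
The plan is to verify directly that the form $\widetilde\omega$ prescribed in \eqref{thm ke metric eq} defines a complete K\"ahler metric on $B(E)$ satisfying $\Ric(\widetilde\omega)=-(m+1)\widetilde\omega$; uniqueness will then be a standard consequence of Yau's maximum principle for complete K\"ahler--Einstein metrics with strictly negative Ricci. First I would work in a local holomorphic trivialization $(U;z^1,\ldots,z^n;w^1,\ldots,w^k)$ of $E$ and set $\rho=|w|_h^2$. A standard Chern-connection calculation expresses $i\partial\dbar\rho$ as the sum of a vertical part built from $h_{\alpha\oo\beta}Dw^\alpha\wedge\overline{Dw^\beta}$ and a horizontal part carrying the curvature $\Theta$. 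Under the splitting hypothesis $\Theta_{\alpha\oo\beta j\oo k}=h_{\alpha\oo\beta}H_{j\oo k}$, combined with the identification of $H$ with a multiple of $\Ric(E,h)=-\omega$ from \eqref{H is the Ricci curvature}, the horizontal contribution collapses to a scalar multiple of $\rho\cdot\pi^*\omega$. Consequently $-i\partial\dbar\log\phi(|w|_h)$ admits a clean horizontal/vertical decomposition governed by the single radial variable $\rho$.

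Next I would assemble the three pieces of \eqref{thm ke metric eq} and, at a given base point $z_0\in M$, diagonalize the Ricci endomorphism in a unitary frame of $T^{1,0}M$ with eigenvalues $\lambda_1,\ldots,\lambda_n$. The constancy of these eigenvalues, together with the splitting, makes $\widetilde\omega$ block-diagonal in such a frame: an $n\times n$ horizontal block whose $j$-th diagonal entry is a scalar function of $\rho$ depending only on $\lambda_j$, and a $k\times k$ vertical block equal to a scalar function of $\rho$ times $h_{\alpha\oo\beta}$. Hence $\det\widetilde\omega$ depends on $(z,w)$ only through $\rho$ up to a standard volume factor, and the K\"ahler--Einstein equation $-i\partial\dbar\log\det\widetilde\omega=(m+1)\widetilde\omega$ decomposes into a horizontal identity, which is automatic because $\omega=-\Ric(E,h)$ and the $\lambda_j$ are constant, together with a single scalar second-order ODE in $\rho$ whose coefficients depend only on $\lambda_1,\ldots,\lambda_n$. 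This is precisely the ODE solved by $\phi$ in Proposition \ref{prpnzh2}, so the equation is satisfied automatically.

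It remains to verify positivity, completeness, and uniqueness. Positivity of each horizontal diagonal entry on $B(E)$ uses $\lambda_j<1$ together with the sign properties of $\phi'$, $\phi''$ encoded in Proposition \ref{prpnzh2}; positivity of the vertical block is then immediate. For completeness, the horizontal part of $\widetilde\omega$ dominates a fixed multiple of $\pi^*g$ on $B(E)$ (again because $\lambda_j<1$), so horizontal distances are controlled by completeness of $(M,g)$; the asymptotic blow-up of $-\log\phi(r)$ as $r\to 1^-$, forced by the ODE, makes fiber distances to $S(E)$ infinite. Uniqueness of a complete K\"ahler--Einstein metric of Ricci $-(m+1)$ on $B(E)$ is the standard Mok--Yau/Cheng--Yau argument.

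The main obstacle is the reduction in the second paragraph: one must check carefully that $\widetilde\omega$ is simultaneously block-diagonal and $\rho$-radial in the Ricci-diagonalizing frame, so that the full K\"ahler--Einstein PDE collapses to the single ODE of Proposition \ref{prpnzh2}. Both the splitting of the curvature and the constancy of the Ricci eigenvalues are used essentially at this step; removing either hypothesis yields a genuine PDE whose solvability on $B(E)$ is generically obstructed.
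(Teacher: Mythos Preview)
Your overall strategy matches the paper's: both reduce the K\"ahler--Einstein condition to the single radial ODE of Proposition~\ref{prpnzh2} by exploiting the curvature splitting and constant Ricci eigenvalues, and both prove completeness by showing $\widetilde g\geq c\,\pi^*g$ for horizontal directions (the paper's Lemma~\ref{lm3d15}) together with the boundary blow-up of $-\log\phi$ along fibers. The packaging differs slightly: the paper works with the potential $u=k^{n/(m+1)}(GH)^{-1/(m+1)}\phi(|w|_h)$ and the Monge--Amp\`ere equation $J(u)=1$, and justifies computing in a normal frame ($h_{\alpha\bar\beta}(q)=\delta_{\alpha\beta}$, $dh_{\alpha\bar\beta}(q)=0$) via a coordinate-independence lemma (Lemma~\ref{lemma why can we use normal coordinates}), whereas you propose to compute with Chern connection forms $Dw^\alpha$ directly. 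Either route works. The paper's positivity argument is also indirect (nonvanishing determinant plus positivity near the strongly pseudoconvex boundary), while yours is a direct eigenvalue check; both are fine.

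One concrete inaccuracy to fix: the vertical $k\times k$ block of $\widetilde\omega$ is \emph{not} a scalar function of $\rho$ times $h_{\alpha\bar\beta}$. In the paper's normal frame (Case~3 of Lemma~\ref{lemma compute the Hessian det}) it takes the form
\[
\frac{1}{2X^2}\Bigl(Y-\tfrac{2k}{m+1}\Bigr)\delta_{\alpha\beta}
+\Bigl(\tfrac{XY'}{4}-\tfrac{Y}{2}+\tfrac{k}{m+1}\Bigr)\frac{\overline{\xi_\alpha}\xi_\beta}{X^4},
\]
i.e.\ a scalar multiple of the identity plus a rank-one correction along the radial direction $\xi$. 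Its two distinct eigenvalues are $\tfrac{1}{2X^2}\bigl(Y-\tfrac{2k}{m+1}\bigr)$ and $\tfrac{Y'}{4X}$, both positive by Proposition~\ref{prpnzh2}, and its determinant is handled by the matrix determinant lemma. This does not break your argument, but your description of the block (and hence of why its determinant is purely a function of $\rho$) needs to be corrected accordingly.
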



If the Hermitian vector bundle comes from a direct sum of copies of a single Hermitian line bundle, then it is automatically curvature split (cf. Proposition \ref{curvature split when the bundle splits prop}). Thus we have the following corollary of Theorems \ref{thm obstruction flat} and \ref{thm ke metric}.
\begin{cor}\label{cor ke metric on direct sum}
	Let $(L, h_0)$ be a negative line bundle over a complex manifold $M$. Set
	\begin{equation*}
		(E, h)=(L, h_0)\oplus \cdots \oplus (L, h_0),
	\end{equation*}
	where there are $k$ copies of $(L, h_0)$ on the right hand side.
	Let $g$ be the \k metric induced by $-\Ric(E, h)$. If $(M, g)$ has constant Ricci eigenvalues, then $S(E)$ is obstruction flat. Furthermore, if in addition $(M, g)$ is complete and all the Ricci eigenvalues are strictly less than $1$, then the ball bundle $B(E)$ admits a unique complete \ke metric with Ricci curvature equal to $-(m+1)$, where $m=n+k$ and $n$ is the dimension of $M$.
\end{cor}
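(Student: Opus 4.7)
The plan is to observe that this corollary is an immediate consequence of Theorems \ref{thm obstruction flat} and \ref{thm ke metric} once two structural properties of $(E,h)=(L,h_0)^{\oplus k}$ are verified: namely, that $(E,h)$ is Griffiths negative, and that its curvature splits in the sense of Definition \ref{def curvature split}.

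For Griffiths negativity, I would use the fact that the Chern curvature of a direct sum is the direct sum of the individual curvatures. Concretely, for $e=(e_1,\dots,e_k)\in E_z$ and $v\in T_z^{1,0}M$,
\[
\sqrt{-1}\,\Theta_{E,h}(e\otimes v,e\otimes v)=\sum_{i=1}^{k}\sqrt{-1}\,\Theta_{L,h_0}(e_i\otimes v,e_i\otimes v).
\]
Each summand is nonpositive by negativity of $(L,h_0)$, and at least one summand is strictly negative when $e\neq 0$ and $v\neq 0$; hence $(E,h)$ is Griffiths negative.

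For the curvature split property, I would simply appeal to Proposition \ref{curvature split when the bundle splits prop}, which asserts precisely that a direct sum of copies of a single Hermitian line bundle is curvature split. If one prefers a direct check, write the line bundle curvature as $\sqrt{-1}\,\Theta_{L,h_0}(e_i\otimes v,e_i\otimes v)=h_0(e_i,e_i)\,H(v,v)$ for the Hermitian form $H$ on $T_z^{1,0}M$ determined by the scalar curvature $(1,1)$-form of $(L,h_0)$. Summing over $i$ gives
\[
\sqrt{-1}\,\Theta_{E,h}(e\otimes v,e\otimes v)=\Big(\sum_{i=1}^{k}h_0(e_i,e_i)\Big)H(v,v)=h(e,e)\,H(v,v),
\]
which is exactly the split form \eqref{curvature splits}.

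With these two properties in hand, the hypotheses of Theorems \ref{thm obstruction flat} and \ref{thm ke metric} are satisfied, and applying them yields the two conclusions of the corollary directly. The only mildly subtle point, and the closest thing to an obstacle, is making sure that in the computation of $-\mathrm{Ric}(E,h)$ the induced K\"ahler metric $g$ on $M$ is the one referenced in the statement (which it automatically is, since both theorems define $g$ via $-\mathrm{Ric}(E,h)$), so the constant Ricci eigenvalue assumption and the bound $\lambda_i<1$ are phrased consistently between the corollary and the theorems being invoked.
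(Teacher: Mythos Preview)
Your proposal is correct and follows essentially the same approach as the paper: the corollary is stated there as an immediate consequence of Theorems \ref{thm obstruction flat} and \ref{thm ke metric} once one observes (via Proposition \ref{curvature split when the bundle splits prop}) that $(E,h)=(L,h_0)^{\oplus k}$ is curvature split, with Griffiths negativity handled exactly as you do (cf.\ Remark \ref{Griffiths negative if and only if each line bundle is negative rmk}).
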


\begin{rmk}\label{rem:Webster}
	In the setting of Corollary \ref{cor ke metric on direct sum},  Webster \cite{Web02} proved that for $n=1$ and $k\geq 2$, $S(E)$ is spherical if and only if $(M, g)$ has constant Gauss curvature $K=-2/k$. Note that the metric $g$ here is $k$ multiple of the metric used in \cite{Web02}. This result illustrates the difference between the case where $(E,h)$ is a line bundle ($k=1$) and the case where it is a vector bundle of rank $k\geq 2$. In the former case provided $M$ is compact, the circle bundle is spherical if and only if $K$ is constant, regardless of its value.
\end{rmk}

\begin{rmk}
Combining Webster's result in Remark \ref{rem:Webster} with Corollary \ref{cor ke metric on direct sum} yields that, for $n=1$ and $k\geq 2$, if the Gauss curvature of $(M, g)$ is constant but not equal to $-2$, then $S(E)$ is obstruction flat but not spherical. For example, taking $M=\mathbb{CP}^1$ and $(L, h_0)$ as the tautological line bundle in Corollary \ref{cor ke metric on direct sum}, we find that $S(E)$ is a compact, obstruction flat and non-spherical CR hypersurface for $k\geq 2$. (More examples of obstruction flat CR hypersurface are provided in $\S$\ref{Sec an interesting case}.)
\end{rmk}

\begin{rmk}
	The condition for existence of a complete K\"ahler--Einstein metric in Theorem \ref{thm ke metric} and Corollary \ref{cor ke metric on direct sum} is optimal, in the sense that the conclusion fails if some Ricci eigenvalue is greater than or equal to $1$. Indeed, the following statement follows from van Coevering's work in \cite[Theorem 1.1 and Corollary 1.3]{Coe12}.	{\em Let $M$ be a compact complex manifold of dimension $n$ and $(E, h)$ a Hermitian holomorphic vector bundle over $M$ of rank $k$. Suppose $(E, h)$ is Griffiths negative. Let $g$ be the \k metric on $M$ induced by $-\Ric(E, h)$. Assume $(M, g)$ has constant Ricci eigenvalues. Then all these Ricci eigenvalues are strictly less than $1$ if and only if the ball bundle $B(E)$ admits a unique complete \ke metric $\widetilde{g}$ with Ricci curvature equal to $-(n+k+1)$.} We point out, however, that the conditions in van Coevering's work \cite{Coe12} are formulated in terms of negativity of Chern classes: $-c_1(M)-c_1(E)>0$. In the context of constant Ricci eigenvalues, it can be shown that this condition is equivalent to all Ricci eigenvalues being $<1$, as in Theorem \ref{thm ke metric} and Corollary \ref{cor ke metric on direct sum}. To see the equivalence, one can verify that when $-c_1(M)-c_1(E)>0$, we have 
\begin{equation}\label{eqvc}
	\int_M S_k(1-\lambda_1,\ldots,1-\lambda_n)\,\omega^n>0, \quad k=1,\ldots, n,
\end{equation}
where $S_k$ denotes the symmetric polynomial of degree $k$ and $\lambda_j$ the Ricci eigenvalues.
In the case where the Ricci eigenvalues are constant, the condition \eqref{eqvc} clearly implies $\lambda_j<1$ for $1\leq j\leq k$.
The implication of the other direction of the equivalence is trivial. The main novelty of Theorem \ref{thm ke metric} is that $M$ need not be compact and, in addition, the explicit formula \eqref{thm ke metric eq} for the \ke metric.
\end{rmk}

A case of particular interest occurs when the base manifold is a domain in $\bC^n$. Recall that, for a smoothly bounded strongly pseudoconvex domain $\Omega\subset \mathbb{C}^n$ ($n\geq 2$), the Cheng\textendash Yau solution $u\in C^{\infty}(\Omega)$ is the unique solution to \eqref{Dirichlet problem} and $-\log u$ is the potential of a complete \ke metric with negative Ricci curvature.
More generally, when $\Omega \subset \mathbb{C}^n$ is a bounded pseudoconvex domain, it follows from the work of Mok\textendash Yau \cite{MoYau} that $\Omega$ admits a unique complete \ke metric with Ricci curvature equal to $-(n+1)$. If we write $g=\sum_{i, j=1}^n g_{i\bar{j}} dz_i\otimes d\oo{z_j}$ and set $u=(\det g_{i\bar{j}})^{-\frac{1}{n+1}}$, then $u$ satisfies \eqref{Dirichlet problem}. We will call $u$ the Cheng\textendash Yau\textendash Mok solution for $\Omega$. We have the following corollaries of Theorem \ref{thm ke metric}.

\begin{cor}\label{cor CY solution}
	Let $n\geq 1$, $k\geq 1$ and $m=n+k$. Let $D$ be a domain in $\mathbb{C}^n$ and $h$ a positive real analytic function on $D$ such that $\omega:=\sqrt{-1}\,k\,\partial\dbar\log h$ is the K\"ahler form of a complete \k metric $g=\sum_{i, j=1}^n g_{i\bar{j}} dz_i\otimes d\oo{z_j}$ on $D$. Assume that $(D,g)$ has constant Ricci eigenvalues and that all eigenvalues are strictly less than $1$. Consider the domain
	\begin{equation}\label{ball bundle for domain case}
		\Omega:=\{w=(z,\xi) \in D\times \mathbb{C}^k: |\xi|^2 h(z,\bar{z})<1  \},
	\end{equation}
	and the real hypersurface (which is an open dense subset of the boundary of $\Omega$)
	\begin{equation}\label{sphere bundle for domain case}
		\Sigma:=\{w=(z,\xi) \in D\times \mathbb{C}^k: |\xi|^2 h(z,\bar{z})=1  \}.
	\end{equation}
	Then the Cheng\textendash Yau\textendash Mok solution $u$ of $\Omega$ is given by
	\begin{equation}\label{CYM solution}
		u(w)=k^{\frac{n}{m+1}} (GH)^{-\frac{1}{m+1}}\phi(|\xi|h^{\frac{1}{2}}),
	\end{equation}
where $H=h^k$, $G=\det(g_{i\bar{j}})$ and $\phi$ is as in Theorem $\ref{thm ke metric}$. Moreover, $u$ extends real analytically across the boundary piece $\Sigma$.
\end{cor}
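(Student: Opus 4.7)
My strategy is to realize the setup of the corollary as a special instance of Corollary \ref{cor ke metric on direct sum}, extract the resulting global \k potential $\tilde\psi$ from Theorem \ref{thm ke metric}, and convert it into the Cheng-Yau-Mok solution via the Mok-Yau relation.

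Setting $L = D \times \mathbb{C}$ as the trivial holomorphic line bundle with Hermitian metric $h$ and $E = L^{\oplus k}$ with the direct-sum metric $h_E$, one has $-\Ric(E, h_E) = k\sqrt{-1}\partial\bar\partial\log h = \omega$ and $|(z,\xi)|_{h_E} = |\xi|h^{1/2}$, so the ball and sphere bundles of $E$ coincide with $\Omega$ and $\Sigma$. Corollary \ref{cor ke metric on direct sum} supplies a unique complete \ke metric $\tilde g$ on $\Omega$ with Ricci curvature $-(m+1)$, and substituting $\pi^*\Ric = -\sqrt{-1}\partial\bar\partial\log G$ and $\pi^*\omega = k\sqrt{-1}\partial\bar\partial\log h$ into the formula of Theorem \ref{thm ke metric} yields the global \k potential
\[
\tilde g = \sqrt{-1}\partial\bar\partial\tilde\psi, \qquad \tilde\psi = \frac{1}{m+1}\log(GH) - \log\phi(|\xi|h^{1/2}).
\]

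By the Mok-Yau relation, the Cheng-Yau-Mok solution equals $u = (\det\tilde g_{\alpha\bar\beta})^{-1/(m+1)}$ in the ambient $(z,\xi)$-coordinates, so the corollary reduces to the global identity $\det\tilde\psi_{\alpha\bar\beta} = k^{-n}e^{(m+1)\tilde\psi}$ on $\Omega$. I would establish this in two steps. First, the \ke condition forces $F := \det\tilde\psi_{\alpha\bar\beta}\cdot e^{-(m+1)\tilde\psi}$ to be the exponential of a real pluriharmonic function on $\Omega$; since $\tilde\psi$, and hence $F$, is invariant under the fiber rotations $\xi\mapsto e^{\sqrt{-1}\theta}\xi$, and any rotation-invariant real pluriharmonic function on $\Omega$ must depend only on $z$, $F$ is a function of $z$ alone. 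The constancy of the Ricci eigenvalues $\lambda_i$, together with evaluation at $\xi = 0$ (see below), will then show that $F$ is in fact a global constant. Second, one evaluates $F$ at $\xi = 0$: the Hessian of $\tilde\psi$ is block-diagonal there, with $z$-block $\tfrac{1}{m+1}(g_{j\bar k} - R_{j\bar k})$ and $\xi$-block $-\Phi'(0) h\, I_k$, where $\Phi(t) := \log\phi(\sqrt{t})$; hence
\[
F = (m+1)^{-n}\prod_{i=1}^n(1-\lambda_i)\cdot(-\Phi'(0))^k\phi(0)^{m+1},
\]
which is $z$-independent and, by the normalization of $\phi$ provided by Proposition \ref{prpnzh2}, equals $k^{-n}$. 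This gives the claimed formula for $u$.

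Finally, since $\phi$ is even and real analytic on $(-1, 1)$, we have $\phi(|\xi|h^{1/2}) = \hat\phi(|\xi|^2 h)$ for a real analytic function $\hat\phi$ on $[0, 1)$; the ODE characterization of $\phi$ from Proposition \ref{prpnzh2} has real analytic coefficients near $\pm 1$, so $\hat\phi$ extends real analytically across $1$, and combined with the real analyticity of $G$ and $H$ on $D$, $u$ extends real analytically across $\Sigma$. The main obstacle is verifying the value $k^{-n}$ for the constant in the second step, which requires carefully matching the block-matrix Hessian determinant at $\xi = 0$ to the specific normalization of $\phi$ from Proposition \ref{prpnzh2}; once that is in hand, the remaining arguments are essentially structural.
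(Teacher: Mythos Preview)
Your setup (trivial line bundle $L$, $E=L^{\oplus k}$, identification of $B(E)$ with $\Omega$) is exactly the paper's, and your potential $\tilde\psi=\frac{1}{m+1}\log(GH)-\log\phi(|\xi|h^{1/2})$ is correct. The route you take from there, however, is genuinely different from the paper's and considerably more roundabout.

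The paper simply observes that the proof of Theorem~\ref{thm ke metric} (via Proposition~\ref{prpn3d5} and Lemma~\ref{lemma compute the Hessian det}) already establishes $J(u)=1$, i.e.\ $\det\bigl((-\log u)_{s\bar t}\bigr)=u^{-(m+1)}$, for the explicit function $u=k^{n/(m+1)}(GH)^{-1/(m+1)}\phi(|w|_h)$ throughout $B(E)$. Since the Cheng--Yau--Mok solution is by definition $(\det\tilde g_{s\bar t})^{-1/(m+1)}$, the formula \eqref{CYM solution} follows at once with no further work. For the real-analytic extension the paper just quotes Proposition~\ref{prpnzh2}, which already asserts that $\phi$ is real analytic on $[-1,1]$.

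Your approach instead extracts only the abstract K\"ahler--Einstein condition from Theorem~\ref{thm ke metric}, deduces that $\log F$ is pluriharmonic, uses circle invariance in $\xi$ to reduce $F$ to a function of $z$ alone, and then evaluates at $\xi=0$. This is valid: a circle-invariant real pluriharmonic function on a ball in $\mathbb{C}^k$ is the real part of a holomorphic function with only the constant term surviving the averaging, so $F=F(z)$; and the block-diagonal Hessian at $\xi=0$ gives exactly your displayed expression. The ``main obstacle'' you flag does work out: using $Z=\lambda+r^2W$ from the proof of Proposition~\ref{prpnzh2} one finds $-\Phi'(0)=-W(0)/(2\lambda^2)$ and $\phi(0)^{m+1}=2^m\lambda^{m+k}/\bigl((-W(0))^k\prod_i(1-\lambda_i)\bigr)$, and after the dust settles the product is indeed $k^{-n}$. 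But note that this computation requires mining the proof of Proposition~\ref{prpnzh2} for the behaviour of $\phi$ at $r=0$, which is not recorded in its statement, whereas the paper's route needs nothing beyond what is already proved in Lemma~\ref{lemma compute the Hessian det}. Your pluriharmonic reduction is elegant and gives a cleaner reason \emph{why} the constant must be universal, but the paper's direct citation of $J(u)=1$ is shorter and avoids the delicate $r\to 0$ analysis entirely.
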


\begin{cor}\label{cor homogeneous domain}
	Let $n\geq 1$, $k\geq 1$ and $m=n+k$. Let $D$ be a domain in $\mathbb{C}^n$ and $h$ a positive real analytic function on $D$ such that $\omega:=\sqrt{-1}\,k\,\partial\dbar\log h$ is the K\"ahler form of a complete \k metric $g=\sum_{i, j=1}^n g_{i\bar{j}} dz_i\otimes d\oo{z_j}$ on $D$. Assume that $(D, g)$ is a homogeneous \k manifold (i.e., the group of holomorphic isometries acts transitively on $D$). Consider the domain $\Omega$ and the real hypersurface $\Sigma$ defined by \eqref{ball bundle for domain case} and \eqref{sphere bundle for domain case}.
	Then the Cheng\textendash Yau\textendash Mok solution $u$ of $\Omega$ is given by
	\begin{equation*}
		u(w)=k^{\frac{n}{m+1}} (GH)^{-\frac{1}{m+1}}\phi(|\xi|h^{\frac{1}{2}}),
	\end{equation*}
where $H=h^k$, $G=\det(g_{i\bar{j}})$ and $\phi$ is as in Theorem \ref{thm ke metric}. Moreover, $u$ extends real analytically across the boundary piece $\Sigma$.
\end{cor}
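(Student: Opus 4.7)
The plan is to deduce Corollary \ref{cor homogeneous domain} from Corollary \ref{cor CY solution} by showing that the homogeneity assumption on $(D,g)$ forces the two extra hypotheses of the latter, namely that the Ricci eigenvalues are constant and all strictly less than $1$.

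Constancy is immediate. The group of holomorphic isometries of $(D,g)$ acts transitively by hypothesis and preserves both $g$ and the Ricci form $\Ric(g)=-\sqrt{-1}\partial\dbar\log\det(g_{i\bar j})$; consequently the Ricci endomorphism $\Ric(g)\cdot g^{-1}$ on $T^{1,0}D$ is an invariant tensor under a transitive action, so its characteristic polynomial is independent of the base point. In particular the Ricci eigenvalues $\lambda_1\leq\cdots\leq\lambda_n$ are constants.

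For the bound $\lambda_n<1$, I would invoke the structure theory of complete homogeneous \k manifolds. Since $D\subset\bC^n$ carries a complete \k metric and sits inside Euclidean space, it is Stein and thus contains no positive-dimensional compact complex subvariety. By the classification of complete homogeneous \k manifolds (Hano's theorem in the semisimple case, Dorfmeister--Nakajima more generally), after passing to the universal cover $(\widetilde D,\tilde g)$ one obtains a holomorphic fibration over a homogeneous bounded domain whose fibers are of the form $\bC^p\times M_c$ with $M_c$ of compact type; the Stein property forces $M_c$ to reduce to a point. The flat $\bC^p$ fibers contribute zero Ricci eigenvalues, while a homogeneous \k metric on a homogeneous bounded domain has non-positive Ricci eigenvalues. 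Since Ricci eigenvalues are local invariants and so coincide on $D$ and $\widetilde D$, this yields $\lambda_i\leq 0<1$ for every $i$.

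Having verified both hypotheses, Corollary \ref{cor CY solution} applies verbatim and delivers the asserted explicit formula for the Cheng--Yau--Mok solution together with its real analytic extension across $\Sigma$. The main obstacle is the bound on $\lambda_n$: a direct Bonnet--Myers argument yields only $\lambda_1<1$ (since $\lambda_1\geq 1$ would imply $\Ric\geq g$ and hence compactness of $D$, contradicting $D\subset\bC^n$), so controlling the \emph{largest} Ricci eigenvalue genuinely seems to require either the homogeneous \k structure theory above, or an argument carefully exploiting the globally defined plurisubharmonic potential $\log h$ together with the transitive isometric action.
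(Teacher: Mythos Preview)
Your reduction to Corollary~\ref{cor CY solution} and the argument for constancy of the Ricci eigenvalues are fine and match the paper. The two approaches diverge on the bound $\lambda_n<1$, and the paper's route is far more direct.

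The paper avoids structure theory entirely by using the Bergman metric $g_B$ of $D$. Every holomorphic isometry of $(D,g)$ is in particular a biholomorphism of $D$, and $g_B$ is biholomorphically invariant; hence the ratio of volume densities $\det(g_{i\bar j})/\det((g_B)_{i\bar j})$ is invariant under the transitive group $\Iso(g)$ and therefore constant. Taking $-\partial\bar\partial\log$ gives $\Ric(g)=\Ric(g_B)$. Since $D$ is homogeneous under biholomorphisms one has $\Ric(g_B)=-g_B$ (the ratio $K_D(z,z)/\det((g_B)_{i\bar j})$ is likewise invariant, hence constant). Thus $\Ric(g)\cdot g^{-1}=-g_B\cdot g^{-1}$, whose eigenvalues are all strictly negative because $g$ and $g_B$ are both positive definite Hermitian forms. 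This is a two-line argument once one remembers that both the given volume form and the Bergman volume form transform the same way under biholomorphisms.

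Your route through the Dorfmeister--Nakajima classification is plausible but heavy, and two steps are not justified. First, ``complete K\"ahler metric on $D\subset\bC^n$ $\Rightarrow$ $D$ Stein'' is not a general fact; fortunately you only need the absence of positive-dimensional compact subvarieties, and that holds for any open subset of $\bC^n$ by the maximum principle, so this is easily repaired. Second, and more seriously, the assertion that a homogeneous K\"ahler metric on a homogeneous bounded domain has non-positive Ricci eigenvalues is precisely the content that remains to be proved---and its cleanest proof is the Bergman-metric comparison above. So after the structure-theoretic detour (and after checking that the Ricci eigenvalues really do split along the fibration, which you also do not verify), you still owe the same computation the paper performs directly on $D$ itself.
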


Finally, by studying the potential rationality of the Cheng\textendash Yau\textendash Mok solution, we obtain a characterization of the unit ball in a class of egg domains in terms of the Bergman-Einstein condition. A well-known conjecture posed by Yau \cite{Yau16} asserts that if the Bergman metric
of a bounded pseudoconvex domain is K\"ahler-Einstein, then the domain must be homogeneous. The following result gives an affirmative answer for a class of egg domains.
\begin{prop}\label{prop egg domain}
	Given $p\in \mathbb{R}^+, n\in \mathbb{Z}^+$ and $k\in \mathbb{Z}^+$, we let
	\begin{equation*}
		E_p:=\{(z, \xi)\in \mathbb{C}^n\times \mathbb{C}^k: |z|^2+|\xi|^{2p}<1\}.
	\end{equation*}
	Then the Bergman metric of $E_p$ is \ke if and only if $p=1$.
\end{prop}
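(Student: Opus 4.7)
The ``if'' direction is immediate, since $E_1$ is the unit ball $\mathbb{B}^{n+k}$, whose Bergman metric is the classical \ke metric. For the converse, the plan is to realize $E_p$ in the framework of Corollary \ref{cor CY solution} with $D=\mathbb{B}^n\subset\mathbb{C}^n$ and $h(z,\oo{z})=(1-|z|^2)^{-1/p}$. The induced \k form $\omega=\sqrt{-1}\,k\,\partial\dbar\log h$ is a constant multiple of the standard hyperbolic metric on $\mathbb{B}^n$; in particular, $(D,g)$ has all Ricci eigenvalues equal to the constant $-p(n+1)/k<1$. Corollary \ref{cor CY solution} then produces the explicit Cheng\textendash Yau\textendash Mok potential
\[
u(w)\;=\;p^{n/(m+1)}\,(1-|z|^2)^{(n+1+k/p)/(m+1)}\,\phi_p\bigl(|\xi|(1-|z|^2)^{-1/(2p)}\bigr),
\]
where $\phi_p$ is the even real analytic function on $(-1,1)$ supplied by Proposition \ref{prpnzh2}.

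Next, I would reduce the Bergman-Einstein assumption to a scalar identity between $K_{E_p}$ and $u$. Both are invariant under the natural $U(n)\times U(k)$-action on $E_p$, while any invariant holomorphic function on $E_p$ is constant. Combining this observation with uniqueness of the complete KE metric and the Fefferman-type asymptotic $K_{E_p}\sim C/\rho^{m+1}$ on the smooth part of $\partial E_p$, Bergman-KE is equivalent to
\[
K_{E_p}(w,\oo{w})\;=\;C\,u(w)^{-(m+1)}\quad\text{on } E_p
\]
for some positive constant $C$.

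To test this identity, I would compute $K_{E_p}$ via orthogonality of the monomials $z^\alpha\xi^\beta$ in $L^2(E_p)$. Fubini together with classical Beta integrals on $\mathbb{B}^n$ and on $\{|\xi|^{2p}<1-|z|^2\}$ yield
\[
K_{E_p}(w,\oo{w})\;=\;\frac{1}{\pi^{n+k}}\,(1-|z|^2)^{-(n+1+k/p)}\,F_p(t),\qquad t=|\xi|^2(1-|z|^2)^{-1/p},
\]
with $F_p(t)=\sum_{i\ge 0}\tfrac{(k+i)!}{i!}\prod_{j=1}^n\!\bigl(\tfrac{k+i}{p}+j\bigr)t^i$. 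Since the coefficient of $t^i$ is a polynomial in $i$ of degree $m=n+k$, one has $F_p(t)=R_p(t)/(1-t)^{m+1}$ for a polynomial $R_p$ of degree $\le m$. Combining with the formula for $u$, the identity $K_{E_p}u^{m+1}\equiv C$ collapses to
\[
\phi_p(s)^{m+1}\;=\;\frac{c\,(1-s^2)^{m+1}}{R_p(s^2)}\quad\text{on }s\in[0,1),
\]
for some $c>0$.

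For $p=1$ one computes $R_1(t)=(n+k)!$ and, from Proposition \ref{prpnzh2}, $\phi_1(s)=1-s^2$, so the identity holds. The main obstacle is to exclude $p\neq 1$. The plan is to substitute the algebraic candidate $\phi_p(s)=c^{1/(m+1)}(1-s^2)R_p(s^2)^{-1/(m+1)}$ into the ODE for $\phi_p$ arising from the Monge\textendash Amp\`ere equation that makes $\widetilde\omega$ in \eqref{thm ke metric eq} \ke with Ricci $-(m+1)$, and then match Taylor coefficients at $s=0$. The ODE recursion determines the Taylor coefficients of $\phi_p$ uniquely from $p$, while the algebraic candidate determines them from $R_p$; the resulting overdetermined system of polynomial equations in $p$ should admit only the solution $p=1$.
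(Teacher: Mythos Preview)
Your setup and reduction are essentially the same as the paper's: realizing $E_p$ as the ball bundle over $\mathbb{B}^n$ with $h=(1-|z|^2)^{-1/p}$, invoking Corollary \ref{cor CY solution} to write down the Cheng\textendash Yau\textendash Mok potential $u$, and then arguing that the Bergman--Einstein hypothesis forces $K_{E_p}=C\,u^{-(m+1)}$. Your direct monomial computation of $K_{E_p}$ reaches the same conclusion as the paper's citation of D'Angelo's formula, namely that $\phi_p(s)^{m+1}$ must be a rational function of $s^2$. So up to this point you are on the paper's track.

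The gap is in your final step. Saying you will ``substitute the algebraic candidate into the ODE for $\phi_p$ and match Taylor coefficients'' and that ``the resulting overdetermined system \emph{should} admit only the solution $p=1$'' is not a proof; you have not identified which equations actually pin down $p$, nor why the system is genuinely overdetermined rather than consistently solvable for every $p$. The paper closes this gap with a sharp structural lemma (Proposition \ref{prop equivalent conditions on rationality}): one passes from $\phi$ to the auxiliary function $Z$ via $1/Z=\tfrac{2k}{m+1}-r\phi'/\phi$, so that rationality of $\phi^{m+1}$ is equivalent to rationality of $Z$. The first-order ODE $rZ'\hat P(Z)+\hat Q(Z)=0$ is then analyzed directly. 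Writing $\hat P(x)=(1-\mu x)^n(1-\nu x)^{k-1}$ and, via Lemma \ref{lemma Q for the KE case}, $\hat Q(x)=(1-\mu x)^{n+1}\widetilde T(x)+c\,x^{m+1}$, one substitutes a rational $Z=p/q$ and clears denominators; a degree count forces the constant $c=Q(\mu)$ to vanish. A short Beta-function computation then shows $c=0$ if and only if $\lambda=-\tfrac{n+1}{k}$, i.e.\ $p=1$. This is the missing idea you need: the obstruction to rationality is encoded in a single number $c$ attached to the polynomial $Q$, not in an unspecified infinite system of Taylor constraints.

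A minor point: your justification of $K_{E_p}=C\,u^{-(m+1)}$ via ``any invariant holomorphic function is constant'' is not the right mechanism, since neither side is holomorphic. The paper's argument (Fefferman asymptotics plus Cheng--Yau to fix the Einstein constant at $-1$, then Fu--Wong's Proposition 1.2, then uniqueness of the complete KE metric to compare volume forms) is what you should invoke here.
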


\begin{rmk}
	When $n=1$ and $k=1$, the above proposition was proved by Cho \cite{Cho21} (and was known earlier to Fu\textendash Wong \cite{FuWo} if $p$ is an integer). When $k=1$ and $n\geq 1$, it was proved by the authors in \cite{EXX22}.
\end{rmk}


Although we have established versions of Theorems \ref{thm obstruction flat} and \ref{thm ke metric} in the case of line bundles in \cite{EXX22},  we emphasize that the extension to the case of vector bundles carried out in this paper is far from simple and obvious.  To illustrate a basic difference between the two cases, we offer the following observation: If $(L, h)$ is a Hermitian line bundle over a \k manifold $(M, g)$, where $g$ is induced by $-\mathrm{Ric}(L,h)$, then under some mild assumptions (e.g., simple-connectedness of $M$), every holomorphic self-isometry of $(M,g)$ naturally extends to a biholomorphism  of $(L,h)$ which preserves the fiber norms. This statement, however, fails dramatically when $(L,h)$ is replaced by a vector bundle $(E,h)$ of higher rank. While we borrow ideas from \cite{EXX22} to construct the desired metric in the proofs of Theorem \ref{thm obstruction flat} and \ref{thm ke metric}, the main difficulty arises in the verification that the metric constructed indeed satisfies the \ke condition (i.e., the complex Monge-Amp\`ere equation). In particular, in the vector bundle case, the calculation of  the Ricci curvature of the metric we construct seems very difficult in general. A key observation is that the curvature splitting assumption appears to be the right condition to make the computation tractable. Even under this assumption, however, the calculations in the vector bundle case are significantly more involved than those in the line bundle case
(cf.  Lemma \ref{lemma why can we use normal coordinates} and Lemma \ref{lemma compute the Hessian det}).

The paper is organized as follows.  $\S$\ref{Sec preliminaries} gives some preliminary materials on Hermitian vector bundles, including the Griffiths negativity and the curvature split condition. In $\S$\ref{Sec proof of thm obstruction flat} and $\S$\ref{Sec proof of thm ke metric}, we will respectively prove Theorem \ref{thm obstruction flat} and Theorem \ref{thm ke metric}. In $\S$\ref{Sec an interesting case}, Corollary \ref{cor CY solution}, Corollary \ref{cor homogeneous domain} and Proposition \ref{prop egg domain} are established, and some examples of obstruction CR hypersurfaces are constructed.

{\bf Acknowledgment.} The third author would like to thank Professor Yihong Hao and Professor Liyou Zhang for bringing the paper \cite{Coe12} to his attention and for the helpful discussions. 

\section{Hermitian vector bundles}\label{Sec preliminaries}
Let $(E, h)$ be a Hermitian (holomorphic) vector bundle over a complex manifold $M$ of dimension $n$. Denote by $\pi: E\rightarrow M$ the canonical projection and by $E_z=\pi^{-1}(z)$ the fiber at $z$. As before, $\Theta=\Theta_{E, h}$ is the curvature form of the Chern connection of $(E, h)$. Set $k=\rank E$ and let $\{e_{\alpha}(z)\}_{\alpha=1}^k$ be a basis of $E_z$. If $\{e_{\alpha}(z)\}_{\alpha=1}^k$ is orthonormal, then the Ricci curvature of $E$ at $z$ is given by the Hermitian form on $T^{1,0}_zM$:
\begin{equation*}
	\Ric(E) (v, v)=\sum_{\alpha=1}^k \Theta_z(e_{\alpha}(z)\otimes v, e_{\alpha}(z)\otimes v).
\end{equation*}
Note that the condition \eqref{curvature splits} in Definition \ref{def curvature split} implies $	\Ric(E)(v,v)=k H_z(v,v)$.
Hence the curvature $\Theta$ splits if and only if
\begin{equation}\label{H is the Ricci curvature}
	\Theta=\frac{1}{k}h \cdot \Ric(E).
\end{equation}


It will be useful to express the curvature split condition in local coordinates. Let $(D, z)$ with $z=(z_1, \cdots, z_n)$ be a local coordinates chart of $M$ and $\{e_{\alpha}\}_{\alpha=1}^k$ a local frame of $E$ over $D$. Then we have
\begin{equation*}
	\pi^{-1}(D)=\Bigl\{\sum_{\alpha=1}^k \xi_{\alpha} e_{\alpha}(z): z\in D \mbox{ and } \xi_{\alpha}\in \mathbb{C} \mbox{ for any } 1\leq \alpha\leq k \Bigr\},
\end{equation*}
and $(z, \xi)=(z, \xi_1, \cdots, \xi_k)$ form a local coordinates system on $\pi^{-1}(D)$. With respect to the local coordinates $z$ and the local frame $\{e_{\alpha}\}$ over $D$, we can write the curvature tensor $\Theta$ into
\begin{equation*}
	\Theta:=\sum_{\alpha, \beta=1}^k \sum_{i, j=1}^n \Theta_{\alpha\bar{\beta}i\bar{j}}\, e_{\alpha}^* \otimes \oo{e_{\beta}^*}\otimes dz_i \otimes d \oo{z_j}.
\end{equation*}
In addition, if we denote by $	h_{\alpha\bar{\beta}}:=h(e_{\alpha}, e_{\beta})$ and $R_{i\bar{j}}:=\Ric(E) \bigl(\frac{\partial}{\partial z_i}, \frac{\partial}{\partial z_j}\bigr)$,
then it is well-known that
\begin{equation}\label{curvature in local coordinates}
	\Theta_{\alpha\bar{\beta}i\bar{j}}=-\frac{\partial^2 h_{\alpha\bar{\beta}}}{\partial z_i \partial \oo{z_j}}+h^{\gamma\bar{\delta}} \,\frac{\partial h_{\alpha\bar{\delta}}}{\partial z_i}\, \frac{\partial h_{\gamma\bar{\beta}}}{\partial\oo{z_j}},
\end{equation}
and the curvature split condition \eqref{curvature splits} becomes
\begin{equation}\label{curvature splits in local coordinates}
	\Theta_{\alpha\bar{\beta}i\bar{j}}=\frac{1}{k} h_{\alpha\bar{\beta}} R_{i\bar{j}}.
\end{equation}

For the remainder of this paper, we shall use Greek letters $\alpha, \beta, \gamma \cdots$ to denote indices ranging between $1$ and $k$ for the fiber coordinates of $E$,  Roman letters $i, j$ to denote indices ranging between $1$ and $n$ for the local coordinates of $M$, and the Roman letters $s, t$ to denote indices ranging from $1$ to $m=n+k$ for local coordinates of the total space $E$.

The curvature split condition holds true trivially for any Hermitian line bundle. But for Hermitian vector bundles of higher rank this is indeed a strong condition. Nevertheless, we can still construct an abundance of such examples by considering the direct sum of Hermitian line bundles.
\begin{prop}\label{curvature split when the bundle splits prop}
	Let $(L_1, h_1), \cdots, (L_k, h_k)$ be Hermitian line bundles over a complex manifold $M$. Set $(E, h)= (L_1, h_1)\oplus \cdots \oplus (L_k, h_k)$. Then $(E, h)$ is curvature split if and only if $\Ric(L_1, h_1)=\cdots=\Ric(L_k, h_k)$. In particular, if $(L_1, h_1)=\cdots=(L_k, h_k)$, then $(E, h)$ is curvature split.
\end{prop}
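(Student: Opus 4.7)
The plan is to verify the splitting identity \eqref{curvature splits in local coordinates} directly by computing $\Theta_{E,h}$ in a block-diagonal local frame adapted to the decomposition $E = L_1 \oplus \cdots \oplus L_k$. Over a coordinate chart $(D,z)$, I would choose a holomorphic frame $e_\alpha$ of each $L_\alpha$ and assemble them into a frame of $E$. With respect to this frame the Hermitian matrix $h_{\alpha\bar\beta}$ is diagonal: writing $a_\alpha := h_\alpha(e_\alpha,e_\alpha)$, we have $h_{\alpha\bar\beta} = a_\alpha\,\delta_{\alpha\beta}$, and its inverse $h^{\gamma\bar\delta}$ is diagonal as well. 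Formula \eqref{curvature in local coordinates} then collapses to $\Theta_{\alpha\bar\beta i\bar j} = 0$ whenever $\alpha \ne \beta$, while $\Theta_{\alpha\bar\alpha i\bar j} = -a_\alpha\,\partial_i\partial_{\bar j}\log a_\alpha$.

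Next, I would identify these diagonal entries with the Ricci curvature of each line bundle. For a Hermitian line bundle, the curvature and the Ricci curvature coincide as $(1,1)$-forms, so $-\partial_i\partial_{\bar j}\log a_\alpha = \Ric(L_\alpha,h_\alpha)_{i\bar j}$. Hence $\Theta_{\alpha\bar\alpha i\bar j} = h_{\alpha\bar\alpha}\cdot \Ric(L_\alpha,h_\alpha)_{i\bar j}$. Applying the orthonormal-frame description of $\Ric(E)$ recalled at the start of \S\ref{Sec preliminaries} (using $e_\alpha/\sqrt{a_\alpha}$ as an orthonormal frame), I would then obtain $\Ric(E,h) = \sum_{\alpha=1}^{k} \Ric(L_\alpha,h_\alpha)$.

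With these two computations in hand, the proposition reduces to inspection of \eqref{curvature splits in local coordinates}. The off-diagonal equations ($\alpha \ne \beta$) are automatically satisfied on both sides by block-diagonality, so the splitting condition is equivalent to the diagonal equations
\begin{equation*}
\Ric(L_\alpha,h_\alpha) = \tfrac{1}{k}\,\Ric(E,h), \qquad \alpha = 1,\ldots,k.
\end{equation*}
Because the right-hand side does not depend on $\alpha$, these equations hold for all $\alpha$ if and only if all the $\Ric(L_\alpha,h_\alpha)$ coincide; in that common case the relation $\Ric(E,h) = k\,\Ric(L_1,h_1)$ that comes from the trace formula automatically makes each equation an identity. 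The ``in particular'' clause is then immediate, since identical factors have identical Ricci forms.

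There is no serious obstacle in this argument: the essential point is that the direct-sum structure makes $h$ block-diagonal, which simultaneously kills the off-diagonal curvature-split equations and reduces the Ricci trace to a sum. If any step is slightly delicate, it is being careful that the identification $\Theta^{L_\alpha} = \Ric(L_\alpha,h_\alpha)$ uses the fact that $L_\alpha$ has rank one, so that an orthonormal frame consists of a single vector and the ``trace'' in the fiber is a single term.
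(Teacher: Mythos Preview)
Your proposal is correct and follows essentially the same route as the paper's proof: compute $\Theta_{\alpha\bar\beta i\bar j}$ in the block-diagonal frame via \eqref{curvature in local coordinates}, identify the diagonal entries with $h_{\alpha\bar\alpha}\,(\Ric(L_\alpha,h_\alpha))_{i\bar j}$, and compare with \eqref{curvature splits in local coordinates} using $\Ric(E,h)=\sum_\gamma \Ric(L_\gamma,h_\gamma)$. The only cosmetic difference is notation ($a_\alpha$ versus $h_{\alpha\bar\alpha}$).
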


\begin{rmk}
	Let $(L_1, h_1)$ and $(L_2, h_2)$ be two Hermitian line bundles over a complex manifold $M$. Then $\Ric(L_1, h_1)=\Ric(L_2, h_2)$ implies that $L_1$ and $L_2$ are smoothly equivalent, but in general, they are not necessarily biholomorphically equivalent.
\end{rmk}

\begin{proof}[Proof of Proposition $\ref{curvature split when the bundle splits prop}$]
	Let $(D, z)$ be a local coordinates chart and $e_{\alpha}$ a local frame of $L_{\alpha}$ for $1\leq k\leq \alpha$ over $D$. Then $\{e_{\alpha} \}_{\alpha=1}^k$ forms a local frame of $E$, in terms of which the metric $h$ becomes the diagonal matrix
	\begin{equation*}
		(h_{\alpha\bar{\beta}})=\begin{pmatrix}
			h_1(e_1, e_1) & &\\
			& \ddots &\\
			& & h_k(e_k, e_k)\\
		\end{pmatrix}.
	\end{equation*}
	By \eqref{curvature in local coordinates}, when $\alpha\neq \beta$, it follows that $\Theta_{\alpha\bar{\beta}i\bar{j}}=0$; when $\alpha=\beta$, we have
	\begin{align*}
		\Theta_{\alpha\bar{\alpha}i\bar{j}}
		=-\frac{\partial^2 h_{\alpha\bar{\alpha}}}{\partial z_i \partial \oo{z_j}}+h^{\alpha\bar{\alpha}} \frac{\partial h_{\alpha\bar{\alpha}}}{\partial z_i} \frac{\partial h_{\alpha\bar{\alpha}}}{\partial \oo{z_j}}
		=-h_{\alpha\bar{\alpha}} \frac{\partial^2\log \bigl(h_{\alpha\bar{\alpha}} \bigr)}{\partial z_i \partial \oo{z_j}} = h_{\alpha\bar{\alpha}}\, \bigl(\Ric(L_{\alpha}, h_{\alpha}) \bigr)_{i\bar{j}}.
	\end{align*}
	As a result,
	\begin{align*}
		\Theta_{\alpha\bar{\beta}i\bar{j}}=h_{\alpha\bar{\beta}} \, \bigl(\Ric(L_{\alpha}, h_{\alpha}) \bigr)_{i\bar{j}}.
	\end{align*}
	On the other hand, the curvature split condition \eqref{curvature splits in local coordinates} writes into
	\begin{equation*}
		\Theta_{\alpha\bar{\beta}i\bar{j}}=\frac{1}{k} h_{\alpha\bar{\beta}}\, \bigl(\Ric(E, h)\bigr)_{i\bar{j}}=\frac{1}{k} h_{\alpha\bar{\beta}} \sum_{\gamma=1}^k \bigl(\Ric({L_{\gamma}}, h_{\gamma}) \bigr)_{i\bar{j}}.
	\end{equation*}
	So the result follows by comparing the above two equations.
\end{proof}


We next recall the notion of and some simple facts about Griffiths negativity (and positivity) for a Hermitian vector bundle.
\begin{defn}
	Let $(E, h)$ be a Hermitian vector bundle over a complex manifold. We say $(E, h)$ is Griffiths negative (resp. positive) if for any $z\in M, e\in E_z$ and $v\in T^{1,0}M$ we have
	\begin{equation*}
		\Theta_z(e\otimes v, e\otimes v) \leq 0 \quad (\mbox{resp.} \geq 0),
	\end{equation*}
and the equality holds if and only if $e=0$ or $v=0$ (i.e., $e\otimes v=0$). In local coordinates, this means that for any $v=(v_1, \cdots, v_{n}) \in \mathbb{C}^n$ and $\xi=(\xi_1, \cdots, \xi_k) \in \mathbb{C}^k$ we have
$\Theta_{\alpha\bar{\beta}i\bar{j}} \xi_{\alpha} \oo{\xi_{\beta}} v_{i} \oo{v_j} \leq 0~(\mbox{resp.} \geq 0),$
and the equality holds if and only if $v=0$ or $\xi=0$.
\end{defn}

\begin{lemma}\label{Griffiths negative and determinant bundle lemma}
Let $(E, h)$ be a Hermitian vector bundle over a complex manifold. 	Consider the properties:
	\begin{itemize}
		\item[(1)] The Hermitian vector bundle $(E, h)$ is Griffiths negative (resp. positive). 		
		\item[(2)] The determinant line bundle $L=\det E$ with the determinant metric $\det h$ is negative (resp. positive).
	\end{itemize}
In general, it holds that $(1)$ implies $(2)$. If the curvature $\Theta$ splits, then $(1)$ is equivalent to $(2)$.
\end{lemma}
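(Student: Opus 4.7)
The plan is to compute the curvature of the determinant line bundle $(\det E,\det h)$ and recognize it, up to sign, as the Ricci form of $(E,h)$; from there both implications are short.

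First I would recall (or verify in a local frame) that if $\Theta_{E,h}$ is the Chern curvature of $(E,h)$, then the Chern curvature of $(\det E,\det h)$ is $\operatorname{tr}\Theta_{E,h}$, the fiberwise trace over $E$. Concretely, in a local frame $\{e_\alpha\}$ of $E$ with $h_{\alpha\bar\beta}=h(e_\alpha,e_\beta)$, the section $e_1\wedge\cdots\wedge e_k$ trivializes $\det E$ and has squared norm $\det(h_{\alpha\bar\beta})$, so
\begin{equation*}
\Theta_{\det E,\det h}=-\partial\bar\partial\log\det(h_{\alpha\bar\beta})=\sum_{\alpha=1}^{k}\Theta_{\alpha\bar\alpha i\bar j}\,dz_i\wedge d\bar z_j \ \text{(in an orthonormal frame at the point).}
\end{equation*}
By the very definition of $\Ric(E,h)$ recalled in \S\ref{Sec preliminaries}, this means $\Theta_{\det E,\det h}(v,v)=\Ric(E,h)(v,v)$ at each point.

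For (1) $\Rightarrow$ (2), I would fix a point $z$ and choose an $h$-orthonormal basis $\{e_\alpha(z)\}$ of $E_z$. Then for any nonzero $v\in T^{1,0}_zM$,
\begin{equation*}
\Ric(E,h)(v,v)=\sum_{\alpha=1}^{k}\Theta_z\bigl(e_\alpha(z)\otimes v,\,e_\alpha(z)\otimes v\bigr),
\end{equation*}
and each summand has the strict sign dictated by Griffiths negativity (resp.\ positivity), since $e_\alpha(z)\neq 0$. Hence $\sqrt{-1}\,\Theta_{\det E,\det h}(v,v)$ is strictly negative (resp.\ positive), proving that $(\det E,\det h)$ is a negative (resp.\ positive) line bundle.

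For the converse under the splitting hypothesis, I would use identity \eqref{H is the Ricci curvature}, i.e.\ $\Theta=\tfrac{1}{k}\,h\cdot\Ric(E,h)$. For any nonzero $e\in E_z$ and nonzero $v\in T^{1,0}_zM$ this gives
\begin{equation*}
\Theta_z(e\otimes v,e\otimes v)=\tfrac{1}{k}\,h(e,e)\,\Ric(E,h)(v,v),
\end{equation*}
and since $h(e,e)>0$, the sign of the left-hand side is controlled by that of $\Ric(E,h)(v,v)$, i.e.\ by the negativity (resp.\ positivity) of $(\det E,\det h)$. This yields (2) $\Rightarrow$ (1). I expect no serious obstacle: the only subtlety is keeping sign conventions and the identification $\Theta_{\det E,\det h}=\Ric(E,h)$ straight, and making sure the strictness clause (equality iff $e\otimes v=0$) is handled by noting that positivity of $h(e,e)$ for $e\neq 0$ transfers strictness between the two conditions.
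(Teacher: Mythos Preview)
Your proposal is correct and follows essentially the same approach as the paper: both identify $\Theta_{\det E,\det h}$ with $\Ric(E,h)$ via the trace formula $(\Ric(L))_{i\bar j}=h^{\alpha\bar\beta}\Theta_{\alpha\bar\beta i\bar j}$, use it to get (1)$\Rightarrow$(2), and then invoke the splitting identity \eqref{curvature splits in local coordinates} (equivalently \eqref{H is the Ricci curvature}) for the converse. Your write-up is slightly more explicit about the orthonormal-frame computation and the strictness clause, but the argument is the same.
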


\begin{proof}
	The fact that (1) implies (2) follows directly from the identity
	\begin{equation*}
		\bigl(\Ric(L)\bigr)_{i\bar{j
		}}=\bigl(\Ric(E)\bigr)_{i\bar{j}}=h^{\alpha\bar{\beta}}\Theta_{\alpha\bar{\beta}i\bar{j}}.
	\end{equation*}
When $\Theta$ splits, by \eqref{curvature splits in local coordinates} we have
$\Theta_{\alpha\bar{\beta}i\bar{j}}=\frac{1}{k} h_{\alpha\bar{\beta}} \, \bigl(\Ric(L)\bigr)_{i\bar{j}}.$
Then it is clear that (2) also implies (1) in this case.
\end{proof}

\begin{rmk}\label{Griffiths negative if and only if each line bundle is negative rmk}
	Let $(L_1, h_1), \cdots, (L_k, h_k)$ be line bundles over a complex manifold $M$, and consider the vector bundle $(E, h):= (L_1, h_1)\oplus \cdots \oplus (L_k, h_k)$. Then it follows that $\Ric(\det E, \det h)=\Ric(E, h)=\sum_{j=1}^k\Ric(L_j, h_j)$. By using Proposition \ref{curvature split when the bundle splits prop} and Lemma \ref{Griffiths negative and determinant bundle lemma} we immediately obtain that if $(E, h)$ is curvature split, then $(E, h)$ is Griffiths negative if and only if each $(L_j, h_j)$ for $1\leq j\leq k$ is negative.
\end{rmk}

\begin{rmk}
	Consider the special case when $M$ is a Riemann surface (i.e., $n=1$). Suppose $(E, h)$ is Griffiths negative vector bundle over $M$ and let $g$ be the metric induced by $-\Ric(E, h)$. Then by \eqref{curvature splits in local coordinates}, the vector bundle $(E, h)$ is curvature split if and only if $(E, h)$ is Hermitian-Einstein.
\end{rmk}


For a Hermitian holomorphic vector bundle $(E,h)$, recall that $S(E):=\{e\in E: |e|_h=1 \}$ denotes the sphere bundle of $(E, h)$. We conclude this section by noting the following fundamental fact;  see \cite[Lemma 1.2.2]{MoNg} and \cite[Proposition 5.3]{Coe12}.
\begin{prop}
	If $(E, h)$ is Griffiths negative, then $S(E)$ is strongly pseudoconvex.
\end{prop}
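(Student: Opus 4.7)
The plan is to prove the slightly stronger statement that the natural local defining function of $S(E)$ is strictly plurisubharmonic at every point of $S(E)$; this will yield strong pseudoconvexity at once. Working in a local coordinate chart $(D,z)$ of $M$ together with a local holomorphic frame $\{e_\alpha\}_{\alpha=1}^k$ of $E$ over $D$, I use fiber coordinates $\xi = (\xi_1,\ldots,\xi_k)$ on $\pi^{-1}(D)$ and take as local defining function
\[
\rho(z,\xi) := h_{\alpha\bar\beta}(z)\,\xi_\alpha\,\bar\xi_\beta - 1,
\]
so that $S(E) = \{\rho = 0\}$ and $d\rho \neq 0$ there (since $\partial_{\xi_\alpha}\rho = h_{\alpha\bar\beta}\bar\xi_\beta$ is nonzero wherever $\xi \neq 0$).

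To make the complex Hessian of $\rho$ manageable I fix an arbitrary point $p_0=(z_0,\xi_0) \in S(E)$ and choose $\{e_\alpha\}$ to be a Hermitian normal frame at $z_0$, i.e.\ $h_{\alpha\bar\beta}(z_0)=\delta_{\alpha\beta}$ and $\partial_i h_{\alpha\bar\beta}(z_0)=0$ for all $\alpha,\beta,i$. Such a frame exists by the standard construction for any Hermitian holomorphic vector bundle. A direct computation then decomposes $\partial\bar\partial\rho$ into a base-base piece $\partial_i\partial_{\bar j}h_{\alpha\bar\beta}\,\xi_\alpha\bar\xi_\beta$, mixed base-fiber pieces built from $\partial_i h_{\alpha\bar\beta}\,\xi_\alpha$, and a fiber-fiber piece $h_{\alpha\bar\beta}$. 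In the normal frame at $z_0$ the mixed terms vanish, the quadratic correction in \eqref{curvature in local coordinates} disappears, and $\partial_i\partial_{\bar j}h_{\alpha\bar\beta}(z_0) = -\Theta_{\alpha\bar\beta i\bar j}(z_0)$. Hence, for a tangent vector $V=(v,u) \in T^{1,0}_{p_0}E \cong \mathbb{C}^n \oplus \mathbb{C}^k$, I obtain at $p_0$
\[
\partial\bar\partial\rho(V,\bar V)\big|_{p_0} \;=\; -\,\Theta_{\alpha\bar\beta i\bar j}(z_0)\,\xi_{0,\alpha}\,\bar\xi_{0,\beta}\,v_i\,\bar v_j \;+\; \sum_{\alpha=1}^k |u_\alpha|^2.
\]

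Now I invoke the Griffiths negativity hypothesis in its local-coordinate form: $\Theta_{\alpha\bar\beta i\bar j}\,\xi_\alpha\bar\xi_\beta v_i\bar v_j \leq 0$, with equality if and only if $\xi=0$ or $v=0$. Since $|\xi_0|_h = 1$ on $S(E)$, we have $\xi_0 \neq 0$; therefore the curvature contribution is nonnegative and vanishes exactly when $v=0$, while the fiber term $\sum_\alpha |u_\alpha|^2$ vanishes exactly when $u=0$. Thus $\partial\bar\partial\rho|_{p_0}$ is a positive definite Hermitian form on all of $T^{1,0}_{p_0}E$, and in particular its restriction to the CR tangent space $H^{1,0}_{p_0}S(E)$ is positive definite. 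This is precisely strong pseudoconvexity of $S(E)$ at $p_0$, and $p_0$ was arbitrary.

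The only delicate step is the choice of Hermitian normal frame at $p_0$: it decouples the three types of contributions in the Hessian and isolates the curvature tensor, so that the Griffiths negativity assumption can be applied directly. After that the argument reduces to reading off signs, so no substantive obstacle is anticipated.
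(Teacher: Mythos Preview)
Your argument is correct: choosing a normal frame at $z_0$ kills the mixed terms and the quadratic correction in the curvature formula, so the Levi form of $\rho$ at $p_0$ reduces to exactly the expression you wrote, and Griffiths negativity (in the local form given in the paper's Definition) immediately forces positive definiteness on all of $T^{1,0}_{p_0}E$, hence a fortiori on the CR tangent space.

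Note, however, that the paper does not supply its own proof of this proposition; it is stated as a ``fundamental fact'' with references to Mok--Ng \cite[Lemma~1.2.2]{MoNg} and van Coevering \cite[Proposition~5.3]{Coe12}. Your computation is precisely the standard one behind those citations, so there is nothing to compare beyond observing that you have filled in what the paper leaves to the literature.
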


\section{Proof of Theorem \ref{thm obstruction flat}} \label{Sec proof of thm obstruction flat}
In this section we prove Theorem \ref{thm obstruction flat}. As the obstruction flatness is a local property (cf. \cite{Graham1987a}), we need to show that for any point $p\in S(E)$, there exists some neighborhood $U$ of $p$ on $E$ such that $S(E)\cap U$ is obstruction flat. By the work of Graham \cite{Graham1987a} again, it suffices to construct a function $u\in C^{\infty}(U)$ such that $u=0$ on $S(E)\cap U$ and $J(u)=1$ on the pseudoconvex side of  $S(E)\cap U$. To do this,  we shall establish a series of propositions and lemmas.

\begin{prop}\label{prpnzh1}
	Let $P(y)$ be a monic polynomial in $y\in \mathbb{R}$ of degree $m-1\geq 1$ and $Q(y)$ a polynomial satisfying $\frac{dQ}{dy}=(m+1)yP(y)$ (thus $Q$ is a monic polynomial of degree $m+1$ and is unique up to a constant). Suppose $\hat{P}$ and $\hat{Q}$ are polynomials defined by
	\begin{equation*}
		\hat{P}(x)=x^{m-1}P(x^{-1}), \quad \hat{Q}(x)=x^{m+1} Q(x^{-1}).
	\end{equation*}
	Let $I \subset (0, \infty)$ be an open interval containing $r=1$ and  $Z(r)$ a real analytic function on $I$ satisfying the following conditions:
	\begin{equation}\label{eqnzodenz1}
		rZ'\hat{P}(Z)+\hat{Q}(Z)=0~\text{on}~I, \quad Z(1)=0.
	\end{equation}
	\begin{equation}\label{eqnzivnz}
		Z'(r)<0~\text{and}~\hat{P}(Z)>0~\text{on}~I, ~Z(r)>0 ~\text{on}~I_0:=I \cap (0,1), ~\text{and}~ Z'(1)=-1.
	\end{equation}
	Let $k\in \mathbb{Z}^+$ with $k\leq m-1$ and set $\phi(r)=2\bigl(\frac{r^{2k-1}}{-Z' \hat{P}(Z)}\bigr)^{\frac{1}{m+1}}Z$ on $I$. Then $\phi$ is real analytic on $I$, $\phi(1)=0$ and $\phi>0$ on $I_0$. Moreover, $\phi$ satisfies that $(m+1)rZ\phi'+(m+1-2kZ)\phi=0$ on $I$, and $\phi'(1)=-2$.
\end{prop}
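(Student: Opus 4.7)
The plan is to verify the five conclusions in turn, with the ODE identity $(m+1)rZ\phi'+(m+1-2kZ)\phi=0$ being the main task. The preparatory assertions are essentially immediate from \eqref{eqnzivnz}: since $-Z'\hat{P}(Z)>0$ and $r>0$ on $I$, the factor $\bigl(r^{2k-1}/(-Z'\hat{P}(Z))\bigr)^{1/(m+1)}$ is real analytic and strictly positive on $I$, so $\phi$ is real analytic on $I$, vanishes at $r=1$ because $Z(1)=0$, and is strictly positive on $I_0$ because $Z>0$ there.

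The heart of the argument is a polynomial identity for the reciprocal polynomials together with its application via logarithmic differentiation. First, a direct computation from $\hat{P}(x)=x^{m-1}P(x^{-1})$, $\hat{Q}(x)=x^{m+1}Q(x^{-1})$, and $Q'(y)=(m+1)yP(y)$ yields
\[
x\hat{Q}'(x)=(m+1)\bigl(\hat{Q}(x)-\hat{P}(x)\bigr).
\]
Next, differentiating \eqref{eqnzodenz1} once with respect to $r$ and dividing by $Z'\hat{P}(Z)$ (which is nonzero on $I$) gives
\[
\frac{rZ''}{Z'}=-1-\frac{rZ'\hat{P}'(Z)}{\hat{P}(Z)}-\frac{\hat{Q}'(Z)}{\hat{P}(Z)}.
\]
On $I_0$, where $\phi>0$, logarithmic differentiation of $\phi$ followed by multiplication by $(m+1)rZ$ produces
\[
\frac{(m+1)rZ\phi'}{\phi}=(2k-1)Z-\frac{rZZ''}{Z'}-\frac{rZZ'\hat{P}'(Z)}{\hat{P}(Z)}+(m+1)rZ'.
\]
Substituting the expression for $rZ''/Z'$ cancels the $\hat{P}'(Z)$-terms, and using \eqref{eqnzodenz1} in the form $rZ'=-\hat{Q}(Z)/\hat{P}(Z)$, the right-hand side simplifies to $2kZ+Z\hat{Q}'(Z)/\hat{P}(Z)-(m+1)\hat{Q}(Z)/\hat{P}(Z)$. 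The polynomial identity (with $x$ replaced by $Z$) then collapses this to $2kZ-(m+1)$, exactly the desired ODE on $I_0$. Since both sides of the claimed ODE are real analytic on $I$ and agree on the nonempty open subset $I_0$, they agree on all of $I$.

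Finally, $\phi'(1)=-2$ follows by direct computation: since $Z(1)=0$, the product rule applied to the defining formula gives
\[
\phi'(1)=2\Bigl(\tfrac{r^{2k-1}}{-Z'\hat{P}(Z)}\Bigr)^{1/(m+1)}\Big|_{r=1}\cdot Z'(1),
\]
and using $Z'(1)=-1$ together with $\hat{P}(0)=1$ (the leading coefficient of the monic polynomial $P$ is the constant term of $\hat{P}$) yields $\phi'(1)=2\cdot 1\cdot(-1)=-2$. The main obstacle will be the bookkeeping in the logarithmic-derivative computation and verifying that the polynomial identity $x\hat{Q}'(x)=(m+1)(\hat{Q}(x)-\hat{P}(x))$ lines up precisely with the cancellation produced by substituting the expression for $rZ''/Z'$.
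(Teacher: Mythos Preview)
Your proof is correct. The approach is essentially equivalent to the paper's, just organized more directly: the paper introduces the auxiliary quantity $F=Z'Z^{-(m+1)}\hat{P}(Z)$ and computes $r\,F'/F$ in two ways (Lemmas \ref{lemma3d3} and \ref{lemma3d4}, with details deferred to \cite{EXX22}), whereas you work with $\phi'/\phi$ directly. Since $\phi^{m+1}=-2^{m+1}r^{2k-1}/F$, the two logarithmic derivatives differ only by the trivial term $(2k-1)/r$, so the computations are the same at heart. The advantage of your presentation is that it makes the key algebraic input explicit, namely the reciprocal-polynomial identity $x\hat{Q}'(x)=(m+1)\bigl(\hat{Q}(x)-\hat{P}(x)\bigr)$, which in the paper is hidden inside the (cited) proof of Lemma \ref{lemma3d3}; the paper's packaging, on the other hand, matches verbatim the line bundle case in \cite{EXX22}.
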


\begin{rmk}
	Since $P$ and $Q$ are monic polynomials, the polynomials $\hat{P}$ and $\hat{Q}$ satisfy $\hat{P}(0)=1$ and $\hat{Q}(0)=1.$ By elementary ODE theory, (\ref{eqnzodenz1}) has a real analytic solution $Z$ which satisfies \eqref{eqnzivnz} in some open interval $I$ containing $1$.
\end{rmk}

\begin{proof}
	It is clear that $\phi$ is real analytic on $I, ~\phi(1)=0,$ and $\phi>0$ on $I_0$ by the definition of $\phi$ and the assumption of $Z$. We only need to prove the last assertion in Proposition \ref{prpnzh1}. The proof is similar to that of Proposition 2.1 in \cite{EXX22}. We just highlight the following two lemmas and the conclusion will be evident.
	
\begin{lemma}\label{lemma3d3}
	Let $Z$ be as in Proposition \ref{prpnzh1}. Then we have
	$$r(Z'Z^{-(m+1)}\hat{P}(Z))'=(m+1)Z'Z^{-(m+2)}\hat{P}(Z)-Z'Z^{-(m+1)}\hat{P}(Z)~\text{on}~I_0.$$
	Equivalently,
	$$r\frac{(Z'Z^{-(m+1)} \hat{P}(Z))'}{Z'Z^{-(m+1)}\hat{P}(Z)}=-1+(m+1)Z^{-1}~\text{on}~I_0.$$
\end{lemma}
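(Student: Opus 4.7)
The plan is to use the ODE \eqref{eqnzodenz1} to rewrite $F := Z'Z^{-(m+1)}\hat{P}(Z)$ in a form whose derivative can be computed cleanly. From \eqref{eqnzodenz1} we have $Z'\hat{P}(Z) = -\hat{Q}(Z)/r$, hence
\[ rF = -\hat{Q}(Z)\,Z^{-(m+1)}. \]
Since $rF' = (rF)' - F$, it suffices to compute $(rF)'$ and then subtract $F$. By the chain rule,
\[ (rF)' = -\hat{Q}'(Z)\,Z'\,Z^{-(m+1)} + (m+1)\,\hat{Q}(Z)\,Z'\,Z^{-(m+2)}. \]

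The key algebraic input is the identity
\[ \hat{Q}'(x) = \frac{m+1}{x}\bigl(\hat{Q}(x) - \hat{P}(x)\bigr), \]
which I would derive by differentiating $\hat{Q}(x) = x^{m+1}Q(x^{-1})$ directly and substituting the relation $Q'(y) = (m+1)yP(y)$: the first term of the product rule produces $(m+1)\hat{Q}(x)/x$, while the second produces $-(m+1)\hat{P}(x)/x$ after recognizing that $x^{m-2}P(x^{-1}) = \hat{P}(x)/x$. Applying this identity at $x = Z$ (which is strictly positive on $I_0$ by \eqref{eqnzivnz}) and substituting into the expression for $(rF)'$ causes the two $\hat{Q}(Z)$ contributions to cancel, leaving
\[ (rF)' = (m+1)\,\hat{P}(Z)\,Z'\,Z^{-(m+2)}. \]

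Then $rF' = (rF)' - F$ is exactly the first displayed equation of the lemma. The equivalent logarithmic form follows immediately by dividing both sides by $F$, which is nonzero on $I_0$ since $Z' < 0$, $\hat{P}(Z) > 0$, and $Z > 0$ there by \eqref{eqnzivnz}. I expect the only step requiring care to be the derivation of the identity for $\hat{Q}'$, as it involves tracking the substitution $y \mapsto x^{-1}$ and using the definition of $Q$ through its derivative; once this identity is in hand, the rest is routine differentiation.
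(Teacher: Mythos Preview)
Your proof is correct and follows the natural approach. The paper does not give its own proof of this lemma, instead noting that it is identical to Lemma~2.3 in \cite{EXX22}; the key identity $\hat{Q}'(x) = \frac{m+1}{x}\bigl(\hat{Q}(x) - \hat{P}(x)\bigr)$ that you derive is exactly the identity the paper itself establishes later (in the proof of Lemma~\ref{lemma g and h}), so your argument is in line with the paper's methods.
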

		

\begin{lemma}\label{lemma3d4}
	Let $\phi$ and $Z$ be as in Proposition \ref{prpnzh1}. Then we have
	$$r\frac{(Z'Z^{-(m+1)}\hat{P}(Z))'}{Z'Z^{-(m+1)}\hat{P}(Z)}=(2k-1)-(m+1)r\frac{\phi'}{\phi}~\text{on}~I_0.$$
\end{lemma}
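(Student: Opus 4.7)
The plan is to prove this by direct logarithmic differentiation, exploiting the fact that $\phi$ is defined as a product/quotient of simple factors. Unlike Lemma \ref{lemma3d3}, which (presumably) requires using the ODE \eqref{eqnzodenz1} to reduce the derivative of $Z'Z^{-(m+1)}\hat{P}(Z)$, the current lemma is a purely algebraic identity relating two logarithmic derivatives, and no ODE information is needed.

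First, I would take the logarithm of the explicit formula for $\phi$,
\[
  \log\phi = \log 2 + \frac{2k-1}{m+1}\log r - \frac{1}{m+1}\log\bigl(-Z'\hat{P}(Z)\bigr) + \log Z,
\]
which is valid on $I_0$ since $\phi>0$, $Z>0$, and $-Z'\hat{P}(Z)>0$ there by \eqref{eqnzivnz}. Differentiating in $r$, multiplying by $(m+1)r$, and rearranging gives
\[
  (2k-1) - (m+1)r\frac{\phi'}{\phi} = r\frac{(Z'\hat{P}(Z))'}{Z'\hat{P}(Z)} - (m+1)r\frac{Z'}{Z}.
\]

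Next, I would compute the right-hand side of the claimed identity by the same trick. Taking $\log\bigl(Z'Z^{-(m+1)}\hat{P}(Z)\bigr) = \log Z' - (m+1)\log Z + \log \hat{P}(Z)$ (each factor is positive on $I_0$, noting $-Z'>0$ means we may take $\log(-Z')$ instead with the same derivative) and differentiating yields
\[
  \frac{(Z'Z^{-(m+1)}\hat{P}(Z))'}{Z'Z^{-(m+1)}\hat{P}(Z)} = \frac{(Z'\hat{P}(Z))'}{Z'\hat{P}(Z)} - (m+1)\frac{Z'}{Z}.
\]
Multiplying through by $r$ gives exactly the right-hand side of the previous display, which establishes the lemma.

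There is no real obstacle here; the only thing to be careful about is signs when differentiating logarithms of negative quantities (handled by working with $-Z'\hat{P}(Z)>0$), and keeping track of the factor $r$ and the exponent $(2k-1)/(m+1)$ on $r$ coming from the definition of $\phi$. The identity is a formal consequence of the way $\phi$ was constructed: its normalizing factor $\bigl(r^{2k-1}/(-Z'\hat{P}(Z))\bigr)^{1/(m+1)}$ was chosen precisely so that $(m+1)r\phi'/\phi$ absorbs the derivative of $Z'\hat{P}(Z)$ up to the elementary terms $(2k-1)$ and $(m+1)rZ'/Z$, and the latter combines with the $-(m+1)\log Z$ contribution on the right to match.
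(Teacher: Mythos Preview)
Your proof is correct. The paper itself omits the proof of this lemma, noting only that it follows from an argument similar to Lemma~2.4 in \cite{EXX22}; your logarithmic-differentiation computation is exactly the natural argument and is presumably what that reference does (with $k=1$ there replaced by general $k$ here, which only changes the exponent $2k-1$ on $r$).
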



Lemma \ref{lemma3d3} is identical to Lemma 2.3 in \cite{EXX22}; Lemma \ref{lemma3d4} follows from a similar argument as that of Lemma 2.4 in \cite{EXX22}. Thus, we omit their proofs. Finally we compare (the second equation in) Lemma \ref{lemma3d3} and Lemma \ref{lemma3d4} to obtain $(m+1)rZ\phi'+(m+1-2kZ)\phi=0$ on $I_0$. By analyticity, it actually holds on $I.$ Recall
$Z(1)=0, \hat{P}(0)=1$ and $Z'(1)=-1$. It then follows from the definition of $\phi$ that $\phi'(1)=-2.$ This finishes the proof of Proposition \ref{prpnzh1}.
\end{proof}

Let $(M, g)$ and $(E, h)$ be as in Theorem \ref{thm obstruction flat}. Denote by $n$ the complex dimension of $M$ and by $k$ the rank of $E$. Choose a coordinates chart $(D, z)$ of $M$ with a local frame $\{e_{\alpha}\}_{\alpha=1}^k$ of $E$ over $D$. Let $\pi: E\rightarrow M$ be the canonical projection. Then we have
\begin{equation*}
	\pi^{-1}(D)=\Bigl\{\sum_{\alpha=1}^k \xi_{\alpha}e_{\alpha}(z): (z, \xi)\in D\times\mathbb{C}^k \Bigr\}.
\end{equation*}
Under this trivialization, the sphere bundle $S(E)$ over $D$ can be written as
\begin{equation*}
	\Sigma:=S(E)\cap \pi^{-1}(D)=\Bigl\{(z, \xi)\in D\times \mathbb{C}^k: \sum_{\alpha, \beta=1}^k h_{\alpha\bar{\beta}}(z,\bar{z})\xi_{\alpha}\oo{\xi_{\beta}}=1  \Bigr\},
\end{equation*}
where $h_{\alpha\bar{\beta}}(z,\bar{z})=h(e_{\alpha}(z), e_{\beta}(z))$ for $1\leq \alpha, \beta\leq k$. In the local coordinates $z=(z_1, \cdots, z_n)$ we write $g=\sum_{i, j=1}^n g_{i\bar{j}} dz_i\otimes d\oo{z_j}$ on $D$. As $g$ is induced by $-\Ric(E, h)$, we have $g_{i\bar{j}}=\frac{\partial^2 \log H}{\partial z_i \partial \oo{z_j}}$ where $H=\det(h_{\alpha\bar{\beta}})$. Let $G=\det (g_{i\Ol{j}})$ on $D$.

Set $m=n+k$ and denote by $I_n$ the $n \times n$ identity matrix. Given $p\in M$, let $T(y, p)$ be the characteristic polynomial of the linear operator $\frac{2k}{m+1} \Ric \cdot g^{-1}: T_{p}^{1,0}M \rightarrow T_{p}^{1,0}M.$ That is,
\begin{equation}\label{T characteristic poly}
	T(y, p)=\det(yI_n-\frac{2k}{m+1} \mathrm{Ric} \cdot g^{-1}).
\end{equation}
In the local coordinates $z=(z_1, \cdots, z_n)$, writing the Ricci tensor as $\mathrm{Ric}=(R_{i \Ol{k}})_{1 \leq i, k \leq n}=-(\frac{\partial^2 \log G}{\partial z_i \partial \Ol{z_k}})_{1 \leq i, k \leq n},$ we see that $T(y, p)$ is the determinant of the $n \times n$ matrix $(y\delta_{ij}-\frac{2k}{m+1}R_{i \Ol{k}} \cdot g^{j \Ol{k}}(p))$.

Now we define
\begin{equation}\label{P polynomial def}
	P(y, p):= (y-\frac{2k}{m+1})^{k-1}T(y, p).
\end{equation}
By the constant Ricci eigenvalue assumption, 
$P(y,p)$ does not depend on $p$. We will therefore just denote it by $P(y).$
It is clear that $P(y)$ is a monic polynomial in $y$ of degree $m-1$. We apply Proposition \ref{prpnzh1} to this polynomial $P(y)$ (with $k$ equal to the rank of $E$) to obtain  polynomials $Q(y), \hat{P}(x), \hat{Q}(x)$, as well as real analytic functions $Z(r)$ and $\phi(r)$ in some interval $I$ containing $r=1$. We let $y(r)=\frac{1}{Z(r)}$ for $r \in I_0.$ By Proposition \ref{prpnzh1}, $(m+1)rZ\phi'+(m+1-2kZ)\phi=0$ on $I$. It then follows that
\begin{equation}\label{eqn3d5}
	y(r)=\frac{2k\phi(r)-(m+1)r\phi'(r)}{(m+1)\phi(r)}=\frac{2k}{m+1}-r\frac{\phi'}{\phi} \quad \mbox{ on } I_0.
\end{equation}

Theorem \ref{thm obstruction flat} will follow from the next proposition.
\begin{prop}\label{prpn3d5}
	Let
	\begin{align*}
		U=\Bigl\{w:=(z, \xi) \in D\times \mathbb{C}^{k}: |w|_h \in I \Bigr\}, \qquad
		U_0=\Bigl\{w:=(z, \xi) \in D\times \mathbb{C}^{k}: |w|_h \in I_0 \Bigr\},
	\end{align*}
	where $|w|_h^2=\sum_{\alpha, \beta=1}^k h_{\alpha\bar{\beta}}(z, \bar{z}) \xi_{\alpha}\oo{\xi_{\beta}}$.
	Set
	\begin{equation}\label{CY solution}
		u(w)=k^{\frac{n}{m+1}}(GH)^{-\frac{1}{m+1}}\phi\big(|w|_h\big)~\text{for}~w  \in U.
	\end{equation}
Here $G(w)$ is understood as $G(z)$ for $w=(z, \xi).$ Likewise for $H$. Then $u$ is smooth in $U$ and satisfies
	$$J(u)=1~\text{on}~ U_0, \quad u=0~\text{on}~\Sigma.$$
	Consequently, $\Sigma$ is obstruction flat.
\end{prop}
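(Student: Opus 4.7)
The plan: Since $\Sigma = \{|w|_h = 1\}$ and $\phi(1)=0$ by Proposition \ref{prpnzh1}, we have $u=0$ on $\Sigma$ immediately. The main task is $J(u)=1$ on $U_0$. Starting from the classical identity $J(u) = u^{m+1}\det(-\partial_s\bar\partial_t\log u)$, the problem reduces, at each fixed point $(z_0,\xi_0)\in U_0$, to verifying that $\det(\tilde g_{s\bar t})(z_0,\xi_0)=u(z_0,\xi_0)^{-(m+1)}$, where $\tilde g_{s\bar t}:=-\partial_s\bar\partial_t\log u$.

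At the fixed point, choose adapted local data: K\"ahler normal holomorphic coordinates on $M$ at $z_0$, so $g_{i\bar j}(z_0)=\delta_{ij}$ and all pure holomorphic derivatives of $g_{i\bar j}$ vanish at $z_0$, then rotate (using the constant Ricci eigenvalue hypothesis) so that $R^M_{i\bar j}(z_0) = \lambda_i\delta_{ij}$; a normal orthonormal frame $\{e_\alpha\}$ for $E$ near $z_0$ with $h_{\alpha\bar\beta}(z_0)=\delta_{\alpha\beta}$ and $\partial_i h_{\alpha\bar\beta}(z_0)=0$; and a unitary rotation in the fibre coordinates so that $\xi_0=(r_0,0,\dots,0)$ with $r_0=|w|_h|_{(z_0,\xi_0)}$. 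Splitting $\log u = \text{const} - \tfrac{1}{m+1}\log(GH)+\log\phi(|w|_h)$, the $\log G$ and $\log H$ pieces contribute only to the base block, with $(\log G)_{i\bar j}=-R^M_{i\bar j}$ and $(\log H)_{i\bar j}=g_{i\bar j}$ (since $\omega=-\mathrm{Ric}(E,h)=i\partial\bar\partial\log H$). For the $\log\phi$ piece, write $q:=|w|_h^2$ and $\psi(q):=\log\phi(\sqrt q)$, so $(\partial\bar\partial\psi)_{s\bar t}=\psi'(q)q_{s\bar t}+\psi''(q)q_s q_{\bar t}$.

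The derivatives of $q$ simplify dramatically at $(z_0,\xi_0)$: since $\partial h(z_0)=\bar\partial h(z_0)=0$, one has $q_i=q_{i\bar\beta}=q_{\alpha\bar j}=0$, $q_\alpha=\bar\xi_{0\alpha}$, and $q_{\alpha\bar\beta}=\delta_{\alpha\beta}$; consequently the mixed base-fibre components of $\tilde g_{s\bar t}$ vanish, and $\tilde g_{s\bar t}$ is block diagonal. For the base block, the splitting identity \eqref{curvature splits in local coordinates} with $R_{i\bar j}=\mathrm{Ric}(E,h)_{i\bar j}=-g_{i\bar j}$, combined with \eqref{curvature in local coordinates} (whose second term vanishes at $z_0$), gives $\partial_i\bar\partial_j h_{\alpha\bar\beta}(z_0)=\tfrac{1}{k}\delta_{\alpha\beta}\delta_{ij}$; hence $q_{i\bar j}(z_0,\xi_0)=\tfrac{r_0^2}{k}\delta_{ij}$. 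Setting $A:=r\phi'(r)/\phi(r)$ and using $\psi'(q)=A/(2q)$ together with $A=\tfrac{2k}{m+1}-\tfrac{1}{Z}$ from \eqref{eqn3d5}, we obtain the diagonal base block
\[
\tilde g_{i\bar j}(z_0,\xi_0) \;=\; \Bigl(\tfrac{1-\lambda_i}{m+1}-\tfrac{A}{2k}\Bigr)\delta_{ij} \;=\; \frac{(m+1)-2k\lambda_i Z}{2k(m+1)Z}\,\delta_{ij}.
\]
For the fibre block, with $\xi_0=(r_0,0,\dots,0)$ the diagonal entries are $\tilde g_{1\bar 1}=-\psi'(q_0)-q_0\psi''(q_0) = -A'(q_0)/2 = -Z'/(4r_0 Z^2)$ and $\tilde g_{\alpha\bar\alpha}=-\psi'(q_0)=\bigl((m+1)-2kZ\bigr)/\bigl(2r_0^2(m+1)Z\bigr)$ for $\alpha\ge 2$.

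Multiply the two block determinants and invoke the factorization
\[
\hat P(Z) \;=\; \Bigl(1-\tfrac{2kZ}{m+1}\Bigr)^{k-1}\prod_{i=1}^n\Bigl(1-\tfrac{2k\lambda_i Z}{m+1}\Bigr),
\]
which follows from \eqref{T characteristic poly}--\eqref{P polynomial def}, together with the formula $\phi(r)^{m+1}=2^{m+1}r^{2k-1}Z^{m+1}/(-Z'\hat P(Z))$ from Proposition \ref{prpnzh1}. Direct algebraic simplification collapses $\det(\tilde g_{s\bar t})$ exactly to $k^{-n}\phi(r_0)^{-(m+1)}=u(z_0,\xi_0)^{-(m+1)}$, giving $J(u)=1$ on $U_0$. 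Obstruction flatness of $\Sigma$, and hence of $S(E)$, follows by Graham's theorem \cite{Graham1987a}. The principal obstacle is the compatible normalization of coordinates and frame, together with the correct reading of the splitting identity in \eqref{curvature splits in local coordinates} (with $R_{i\bar j}=\mathrm{Ric}(E,h)_{i\bar j}=-g_{i\bar j}$, not the Ricci of $(M,g)$), to pin down $q_{i\bar j}(z_0,\xi_0)$; the remaining computation is a systematic bookkeeping exercise using the ODE defining $Z$ and the definition of $\phi$.
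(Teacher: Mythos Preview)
Your proposal is correct and follows essentially the same path as the paper's proof: pass to $J(u)=u^{m+1}\det\bigl((-\log u)_{s\bar t}\bigr)$, choose an adapted frame for $E$ with $h_{\alpha\bar\beta}(z_0)=\delta_{\alpha\beta}$ and $dh_{\alpha\bar\beta}(z_0)=0$, observe that the mixed block vanishes, compute the base and fibre blocks separately using the curvature-splitting identity, and collapse the result via the defining relation $\phi^{m+1}=2^{m+1}r^{2k-1}Z^{m+1}/(-Z'\hat P(Z))$.

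There are two cosmetic differences. First, the paper works with $Y=1/Z$ and expresses the base-block determinant via the characteristic polynomial $T(Y)$ of the Ricci endomorphism (and the fibre block via the matrix determinant lemma), whereas you additionally diagonalize $g$, $\mathrm{Ric}$, and the fibre coordinate at the point, so all your blocks are already diagonal. Your extra normalization is legitimate (a constant unitary change of frame preserves both $h_{\alpha\bar\beta}(z_0)=\delta_{\alpha\beta}$ and $\partial_ih_{\alpha\bar\beta}(z_0)=0$), and it streamlines the bookkeeping. Second, the paper isolates as a separate lemma (Lemma~\ref{lemma why can we use normal coordinates}) the statement that $\det\bigl((-\log u)_{s\bar t}\bigr)/(GH)$ is independent of the choice of chart and frame, which is exactly what licenses the passage to adapted coordinates. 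You invoke this freedom implicitly. Since both $\det(\tilde g_{s\bar t})$ and $u^{-(m+1)}=k^{-n}GH\,\phi^{-(m+1)}$ transform under a chart/frame change by the same factor $|J\varphi|^2|\det A|^2$, the identity you are checking is indeed coordinate-free; but it would be worth saying so in one sentence, as the paper does.
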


\begin{proof}
	The smoothness of $u$ follows easily from the that of $\phi$, as well as that of $G, H$ and $h$. 
	We thus only need to prove the remaining assertions. For that,  we first prove the following lemma. Set $X=X(w)=|w|_h$ for $w \in U$,  and
	\begin{align}\label{Y}
		Y=Y(w)=\frac{2k}{m+1}-X\frac{\phi'(X)}{\phi(X)} \quad \mbox{ for } w \in U_0.
	\end{align}
	Then by (\ref{eqn3d5}), we have $Y=y(r)|_{r=X}=\frac{1}{Z(r)}|_{r=X}$. Note that $X$ and $Y$ are independent of the choice of local coordinates and local frame.
	
	In the following, we will also write the coordinates $w$ of $D \times \mathbb{C}^k$ as $(w_1, \cdots, w_{m-1}, w_m).$ That is, we identify $w_i$ with $z_i$ for $1 \leq i \le n$, and $w_{n+\alpha}$ with $\xi_{\alpha}$ for $1\leq \alpha\leq k$. For a sufficiently differentiable function $\Phi$ on an open subset of $D \times \mathbb{C}^k$,  we write, for $1 \leq s, t \leq m,$ $\Phi_s=\frac{\partial \Phi}{\partial w_s}, \Phi_{\Ol{t}}=\frac{\partial \Phi}{\partial \Ol{w_t}},$ and $\Phi_{s\Ol{t}}=\frac{\partial^2 \Phi}{\partial w_s \partial \Ol{w_t}}$. To simplify the later computations, we introduce the following lemma.
	
\begin{lemma}\label{lemma why can we use normal coordinates}
	Let $\pi: (E, h)\rightarrow (M, g)$ be a Hermitian vector bundle of rank $k$ over an $n$-dimensional Hermitian manifold $(M, g)$. Write $m=n+k$. Let $\Omega$ be a smooth $(m ,m)$-form on $E$ (regarded as an $m$-dimensional complex manifold) (resp. an open subset $V\subset E$). Then we can define a smooth function $\Phi$ on $E$ (resp. on $V$) in the following manner: 
	Pick any local coordinates chart $(D, z)$ of $M$ and any local frame $\{e_{\alpha}\}_{\alpha=1}^k$ of $E$ over $D$. This induces a natural system of coordinates $w=(z, \xi)$ for $E$ on $\pi^{-1}(D)$ with
	\begin{equation*}
		\pi^{-1}(D)=\Bigl\{\sum_{\alpha=1}^k\xi_{\alpha} e_{\alpha}(z): (z, \xi)\in D\times \mathbb{C}^k \Bigr\}.
	\end{equation*}	
	In the above coordinates, we write $g=\sum_{i, j=1}^n g_{i\bar{j}} dz_i\otimes d\oo{z_j}$, $h_{\alpha\bar{\beta}}=h(e_{\alpha}, e_{\beta})$ and $\Omega=\sigma(z, \xi)dz\wedge d\xi\wedge d\oo{z}\wedge d\oo{\xi}$ where $dz=dz_1\wedge\cdots dz_n$ and $d\xi=d\xi_1\wedge\cdots\wedge d\xi_k$. We define the function $\Phi: \pi^{-1}(D)\rightarrow \mathbb{C}$ by
	\begin{equation*}
		\Phi(z, \xi)=\frac{\sigma(z,\xi)}{\det \bigl(g_{i\bar{j}}(z)\bigr) \cdot \det \bigl(h_{\alpha\bar{\beta}}(z)\bigr)}.
	\end{equation*}
	Then $\Phi$ is independent of the choice of local coordinates and local frame. As a result, $\Phi$ is a well-defined function on $E$ (resp. on $V$).
\end{lemma}

\begin{proof}
	To show $\Phi$ is well-defined, we take two local coordinates charts, $(D, z)$ and $(\widetilde{D}, \widetilde{z})$ of $M$, and two local frame of $E$, $\{e_{\alpha}\}_{\alpha=1}^k$ over $D$ and $\{\widetilde{e}_{\alpha}\}_{\alpha=1}^k$ over $\widetilde{D}$. Then we obtain two induced coordinates system, $w=(z, \xi)$ on $\pi^{-1}(D)$ and $\widetilde{w}=(\widetilde{z}, \widetilde{\xi})$ on $\pi^{-1}(\widetilde{D})$. On $D\cap \widetilde{D}$ we can write $\widetilde{z}=\varphi(z)$ for some biholomorphism $\varphi$ and $\widetilde{\xi}=\xi\cdot A(z)$ for some holomorphic map $A: D\cap \widetilde{D} \rightarrow \Gl(k, \mathbb{C})$. By the relation $g=\sum_{i, j=1} g_{i\bar{j}}(z) dz_i\otimes d\oo{z_j}=\sum_{i, j=1}^n \widetilde{g}_{i\bar{j}}(\widetilde{z}) d\widetilde{z}_i\otimes d\oo{\widetilde{z}_j}$, we have
	\begin{equation*}
		\det\bigl(g_{i\bar{j}}(z) \bigr)=\det\bigl(\widetilde{g}_{i\bar{j}}(\widetilde{z})\bigr) \cdot \bigl| J\varphi(z)\bigr|^2,
	\end{equation*}
	where $J(\varphi)$ is the determinant of the Jacobian matrix of $\varphi$. Similarly, we also have
	\begin{equation*}
		\det\bigl(h_{\alpha\bar{\beta}}(z) \bigr)=\det\bigl(\widetilde{h}_{\alpha\bar{\beta}}(\widetilde{z})\bigr)\cdot \bigl|\det A(z)\bigr|^2.
	\end{equation*}
	On the other hand, by writing $\Omega=\sigma(z, \xi)dz\wedge d\xi\wedge d\oo{z}\wedge d\oo{\xi}=\widetilde{\sigma}(\widetilde{z}, \widetilde{\xi})d\widetilde{z}\wedge d\widetilde{\xi}\wedge d\oo{\widetilde{z}}\wedge d\oo{\widetilde{\xi}}$ on $\pi^{-1}(D)\cap \pi^{-1}(\widetilde{D})$, we get
	\begin{equation*}
		\sigma(z,\xi)=\widetilde{\sigma}(\widetilde{z}, \widetilde{\xi})\cdot \bigl|J\varphi(z)\bigr|^2\cdot \bigl|\det A(z) \bigr|^2.
	\end{equation*}
	It follows that
	\begin{equation*}
		\frac{\sigma(z, \xi)}{\det\bigl(g_{i\bar{j}}(z) \bigr)\cdot \det\bigl(h_{\alpha\bar{\beta}}(z) \bigr)}=\frac{\widetilde{\sigma}(\widetilde{z}, \widetilde{\xi})}{\det\bigl(\widetilde{g}_{i\bar{j}}(\widetilde{z})\bigr)\cdot \det\bigl(\widetilde{h}_{\alpha\bar{\beta}}(\widetilde{z})\bigr)} \quad \mbox{on } \pi^{-1}(D)\cap \pi^{-1}(\widetilde{D}).
	\end{equation*}
	So the proof is completed.
\end{proof}

Next we will prove the following lemma on the computation of $\det\bigl((-\log u)_{s\bar{t}}\bigr)$.
\begin{lemma}\label{lemma compute the Hessian det}
	Let u(w) be as defined in Proposition \ref{prpn3d5}.  Write $Y'=\frac{dY}{dX},$ i.e., $Y'=y'(r)|_{r=X}$. Then the following is a well-defined function on $U_0$:
	\begin{equation*}
		\Phi(w)=\frac{\det\bigl((-\log u)_{s\bar{t}}(w)\bigr)}{\det\bigl(g_{i\bar{j}}(z)\bigr)\cdot\det\bigl(h_{\alpha\bar{\beta}}(z)\bigr)}.
	\end{equation*}
	That is, $\Phi$ is independent of the choice of local coordinates of $M$ and local frame of $E$. Moreover,
	\begin{equation*}
		\Phi=\frac{P(Y)Y'}{2^{m+1}k^nX^{2k-1}} \quad \mbox{on } U_0.
	\end{equation*}
\end{lemma}

\begin{proof}
	Consider the Hermitian $(1, 1)$-form $\widetilde{\omega}=-\sqrt{-1}\partial\dbar\log u$ on $U_0\subset E$. 
	In the above local coordinates $w=(z, \xi)$, we can write $\widetilde{\omega}=\sqrt{-1}\sum_{s, t=1}^m (-\log u)_{s\bar{t}} dw_s\wedge d\oo{w_t}$. Then $\frac{\widetilde{\omega}^m}{m!}$ is a smooth $(m, m)$-form on $U_0$ and
	\begin{equation*}
		\frac{\widetilde{\omega}^m}{m!}=(\sqrt{-1})^m \det\bigl((-\log u)_{s\bar{t}}\bigr) dw_1\wedge d\oo{w_1}\wedge \cdots \wedge dw_m\wedge d\oo{w_m}.
	\end{equation*}
	By Lemma \ref{lemma why can we use normal coordinates}, $\Phi$ is well-defined on $U_0$.
	
	To prove the second assertion, we fix a point $p\in E$ and write $q=\pi(p)$. To compute the value of $\Phi$ at $p$, by the first part of the lemma, we can use any local coordinates of $M$ and any local frame of $E$. In particular, we can choose a local coordinates of $M$ and a local frame of $E$ at $q$ such that the induced coordinates of $E$ near $p$, which we still denote by $w=(z, \xi)$, satisfy $z(q)=0, h_{\alpha\bar{\beta}}(q)=\delta_{\alpha\beta}$ and $d h_{\alpha\bar{\beta}}(q)=0$, where $\delta_{\alpha\beta}$ is the Kronecker delta. Under the above coordinates, the curvature \eqref{curvature in local coordinates} at point $q$ simplifies into
	\begin{equation*}
		\Theta_{\alpha\bar{\beta}i\bar{j}}(0)=-\frac{\partial^2 h_{\alpha\bar{\beta}}}{\partial z_i \partial \oo{z_j}}(0).
	\end{equation*}
	We take the logarithm of $u$ (which is defined in \eqref{CY solution}) and obtain
	\begin{equation*}
		-\log u=-\frac{n}{m+1}\log k+\frac{1}{m+1} \log G+\frac{1}{m+1} \log H-\log \phi(X),
	\end{equation*}
	where $X=|w|_h=\bigl(\sum_{\alpha, \beta=1}^k h_{\alpha\bar{\beta}}(z, \bar{z}) \xi_{\alpha}\oo{\xi_{\beta}}\bigr)^{1/2}$. A straightforward computation yields
	\begin{equation*}
		(-\log u)_{s\bar{t}}=\frac{1}{m+1}\bigl(\log G\bigr)_{s\bar{t}}+\frac{1}{m+1}\bigl(\log H\bigr)_{s\bar{t}}-\Bigl(\frac{\phi'}{\phi}X_{s\bar{t}}+\bigl(\frac{\phi'}{\phi}\bigr)'X_sX_{\bar{t}}\Bigr) \quad \mbox{for any $1\leq s, t \leq m$}.
	\end{equation*}
	Since $X_{s\bar{t}}=X(\log X)_{s\bar{t}}+\frac{1}{X} X_s X_{\bar{t}}$, the above writes into
	\begin{equation*}
		(-\log u)_{s\bar{t}}=\frac{1}{m+1}\bigl(\log G\bigr)_{s\bar{t}}+\frac{1}{m+1}\bigl(\log H\bigr)_{s\bar{t}}-X\bigl(\frac{\phi'}{\phi}\bigr) \bigl(\log X\bigr)_{s\bar{t}}-\Bigl(\frac{1}{X}\bigl(\frac{\phi'}{\phi}\bigr)+\bigl(\frac{\phi'}{\phi}\bigr)'\Bigr)X_sX_{\bar{t}}.
	\end{equation*}
	To continue the computation of this Hessian matrix, we shall divide it into the following three cases.
	
\textbf{Case 1.} $1\leq s, t\leq n$.

For this case, we have $w_s=z_s$ and $w_t=z_t$. We denote $i=s$ and $j=t$ for simplicity. At $w=(0, \xi)$, by the facts $h_{\alpha\bar{\beta}}=\delta_{\alpha\bar{\beta}}$ and $d h_{\alpha\bar{\beta}}=0$ it follows that $X=|\xi|$ and $X_i=X_{\bar{j}}=0$. Moreover, we also have
\begin{align*}
	\bigl(\log X\bigr)_{i\bar{j}}\big|_{w=(0, \xi)}=\frac{1}{2}\bigl( \log X^2\bigr)_{i\bar{j}}\big|_{w=(0, \xi)}=\frac{1}{2}\frac{(X^2)_{i\bar{j}}}{X^2}\Big|_{w=(0, \xi)}
	=&\frac{1}{2|\xi|^2} \sum_{\alpha, \beta=1}^k \frac{\partial h_{\alpha\bar{\beta}}}{\partial z_i \partial \oo{z_j}}(0) \xi_{\alpha} \oo{\xi_{\beta}}\\
	=&\frac{1}{2|\xi|^2}\sum_{\alpha, \beta=1}^k \bigl(-\Theta_{\alpha\bar{\beta}i\bar{j}}(0)\bigr) \xi_{\alpha}\oo{\xi_{\beta}}.
\end{align*}
Since the curvature $\Theta$ splits and the \k metric is induced by $-\Ric(E, h)$ by the assumption of Theorem \ref{thm obstruction flat}, it follows that
\begin{equation*}
	\Theta_{\alpha\bar{\beta}i\bar{j}}(0)=\frac{1}{k} h_{\alpha\bar{\beta}}(0) \cdot \bigl(\Ric(E, h)\bigr)_{i\bar{j}}(0)=-\frac{1}{k} \delta_{\alpha\beta}\cdot  g_{i\bar{j}}(0).
\end{equation*}
Therefore,
\begin{equation*}
	(\log X)_{i\bar{j}}\big|_{w=(0, \xi)}=\frac{1}{2k} g_{i\bar{j}}(0).
\end{equation*}
Denote the Ricci form $\Ric(g)=\sqrt{-1}R_{i\bar{j}} dz_i\wedge d\oo{z_j}$. Then $(\log G)_{i\bar{j}}=-R_{i\bar{j}}$, $(\log H)_{i\bar{j}}=g_{i\bar{j}}$ and by \eqref{Y} we have
\begin{equation*}
	(-\log u)_{s\bar{t}}\big|_{w=(0, \xi)}=-\frac{1}{m+1}R_{i\bar{j}}(0)+\frac{1}{m+1}g_{i\bar{j}}(0)-\frac{X}{2k} \frac{\phi'}{\phi}  g_{i\bar{j}}(0)=-\frac{1}{m+1}R_{i\bar{j}}(0)+\frac{Y}{2k}  g_{i\bar{j}}(0).
\end{equation*}

\textbf{Case 2.} $1\leq s\leq n, n+1\leq t\leq n+k$ or $n+1\leq s\leq n+k, 1\leq t\leq n$.

For this case, a similar computation yields $(\log G)_{s\bar{t}}$ and $(\log H)_{s\bar{t}}$ vanish identically and $(\log X)_{s\bar{t}}|_{w=(0, \xi)}=X_sX_{\bar{t}}|_{w=(0, \xi)}=0$. Therefore, $(-\log u)_{s\bar{t}}|_{w=(0, \xi)}=0$.

\textbf{Case 3.} $n+1\leq s, t\leq n+k$.

For this case, we have $w_s=\xi_{s-n}$ and $w_t=\xi_{t-n}$. We denote $\alpha=s-n$ and $\beta=t-n$ for simplicity. Then a straightforward computation yields that at any $w=(0, \xi)\in U_0$ we have $X=|\xi|$, $X_s=\frac{1}{2}|\xi|^{-1}\oo{\xi_{\alpha}}$, $\oo{X_t}=\frac{1}{2}|\xi|^{-1}\xi_{\beta}$ and
\begin{equation*}
	\bigl(\log X\bigr)_{s\bar{t}}=\frac{1}{2X^2} \delta_{\alpha\beta}-\frac{1}{2X^4}\oo{\xi_{\alpha}}\xi_{\beta}.
\end{equation*}
As $(\log G)_{s\bar{t}}$ and $(\log H)_{s\bar{t}}$ vanish identically in this case, we have at $w=(0, \xi)$
\begin{equation*}
	(-\log u)_{s\bar{t}}=-X\bigl(\frac{\phi'}{\phi}\bigr) \Bigl(\frac{1}{2X^2} \delta_{\alpha\beta}-\frac{1}{2X^4}\oo{\xi_{\alpha}}\xi_{\beta}\Bigr)-\Bigl(\frac{1}{X}\bigl(\frac{\phi'}{\phi}\bigr)+\bigl(\frac{\phi'}{\phi}\bigr)'\Bigr)\frac{\oo{\xi_{\alpha}}\xi_{\beta}}{4X^2}.
\end{equation*}
By \eqref{Y}, we further write it into
\begin{align*}
	(-\log u)_{s\bar{t}}\,\big|_{w=(0, \xi)}=&\bigl(Y-\frac{2k}{m+1}\bigr) \Bigl(\frac{1}{2X^2} \delta_{\alpha\beta}-\frac{1}{2X^4}\oo{\xi_{\alpha}}\xi_{\beta}\Bigr)+\frac{Y'}{X}\frac{\oo{\xi_{\alpha}}\xi_{\beta}}{4X^2}\\
	=&\frac{1}{2X^2}\bigl(Y-\frac{2k}{m+1}\bigr)\delta_{\alpha\beta}+\bigl(\frac{XY'}{4}-\frac{Y}{2}+\frac{k}{m+1}\bigr)\frac{\oo{\xi_{\alpha}}\xi_{\beta}}{X^4}.
\end{align*}

Combining the above three cases, we see the complex Hessian matrix $\bigl((-\log u)_{s\bar{t}}\bigr)$ at $w=(0, \xi)$ is block diagonal. Moreover,
\begin{equation}\label{determinant Hessian of log u}
	\det\bigl((-\log u)_{s\bar{t}}\bigr)_{1\leq s, t\leq m}=\det\bigl((-\log u)_{s\bar{t}}\bigr)_{1\leq s, t\leq n} \cdot \det\bigl((-\log u)_{s\bar{t}}\bigr)_{n+1\leq s, t\leq m}.
\end{equation}
Now we need to compute the determinants appearing on the right hand side of the above equation. By the above computation in Case 1, we have at $w=(0, \xi)$
\begin{align*}
	\det\bigl((-\log u)_{i\bar{j}}\bigr)_{1\leq i, j\leq n}=&\det\Bigl(-\frac{1}{m+1}R_{i\bar{j}}(0)+\frac{Y}{2k}  g_{i\bar{j}}(0)\Bigr)\\
	=&\bigl(\frac{1}{2k}\bigr)^n\det\Bigl(Y\delta_{ij}-\frac{2k}{m+1}R_{i\bar{k}}(0) \,g^{j\bar{k}}(0)\Bigr) \cdot \det\bigl(g_{i\bar{j}}(0)\bigr)\\
	=& \bigl(\frac{1}{2k}\bigr)^n\, T(Y)\, G,
\end{align*}
where the last equality follows from \eqref{T characteristic poly}. For the second determinant on the right hand side of \eqref{determinant Hessian of log u}, by the computation in Case 3, we have at $w=(0, \xi)$,
\begin{equation*}
	\det\bigl((-\log u)_{s\bar{t}}\bigr)_{n+1\leq s, t\leq m}=\det\Bigl(\frac{1}{2X^2}\bigl(Y-\frac{2k}{m+1}\bigr)\delta_{\alpha\beta}+\bigl(\frac{XY'}{4}-\frac{Y}{2}+\frac{k}{m+1}\bigr)\frac{\oo{\xi_{\alpha}}\xi_{\beta}}{X^4}\Bigr)_{1 \leq  \alpha, \beta \leq k}.
\end{equation*}
As $X=|\xi|$ at $w=(0, \xi)$, by the matrix determinant lemma we get
\begin{align*}
	\det\bigl((-\log u)_{s\bar{t}}&\bigr)_{n+1\leq s, t\leq m}\\
	=&\Bigl(\frac{1}{2X^2}\bigl(Y-\frac{2k}{m+1}\bigr)\Bigr)^{k-1}\Bigl(\frac{1}{2X^2}\bigl(Y-\frac{2k}{m+1}\bigr)+\bigl(\frac{XY'}{4}-\frac{Y}{2}+\frac{k}{m+1}\bigr)\frac{1}{X^2}\Bigr)\\
	=&\frac{Y'}{2^{k+1}X^{2k-1}}\bigl(Y-\frac{2k}{m+1}\bigr)^{k-1}.
\end{align*}
We now plug these results back into \eqref{determinant Hessian of log u} and further use \eqref{P polynomial def} to obtain that at $w=(0, \xi)$
\begin{align*}
	\det\bigl((-\log u)_{s\bar{t}}\bigr)_{1\leq s, t\leq m}=\frac{Y'}{2^{m+1}k^nX^{2k-1}} P(Y)\, G.
\end{align*}
Note that at $w=(0, \xi)$ we have $H=1$. Therefore,
\begin{equation*}
	\frac{\det\bigl((-\log u)_{s\bar{t}}\bigr)_{1\leq s, t\leq m}}{GH}=\frac{P(Y) Y'}{2^{m+1} k^n X^{2k-1}}.
\end{equation*}
This proves Lemma \ref{lemma compute the Hessian det}.
\end{proof}

Now we resume the proof of Proposition \ref{prpn3d5}. By the definition of $\phi$ in Proposition \ref{prpnzh1}, for any $r\in I_0$ we have
\begin{equation*}
	-Z'\hat{P}(Z) Z^{-(m+1)}=2^{m+1} r^{2k-1} \phi^{-(m+1)}(r)
\end{equation*}
Recall that $y(r)=\frac{1}{Z(r)}$. We thus get
\begin{equation*}
	y'\hat{P}(y^{-1}) y^{m-1}=2^{m+1} r^{2k-1} \phi^{-(m+1)}(r).
\end{equation*}
By the definition of $\hat{P}$ in Proposition \ref{prpnzh1}, we have $P(y)=y^{m-1} \hat{P}(y^{-1})$. Therefore,
\begin{equation*}
	y'P(y)=2^{m+1} r^{2k-1} \phi^{-(m+1)}(r).
\end{equation*}
Now we take $r=X=|w|_h$ for $w=(z, \xi)\in U_0$. Then
\begin{equation*}
	Y' P(Y)=2^{m+1} X^{2k-1} \phi^{-(m+1)}(X)=2^{m+1}k^n X^{2k-1} (GH)^{-1} u^{-(m+1)}(X),
\end{equation*}
where the second equality follows from the definition of $u$. Therefore,
\begin{equation*}
	\frac{Y'P(Y)}{2^{m+1}k^n X^{2k-1}}=(GH)^{-1} u^{-(m+1)}(X).
\end{equation*}
By Lemma \ref{lemma compute the Hessian det} we obtain
$\det \bigl((-\log u)_{s\bar{t}}\bigr)_{1 \leq s, t \leq m}=u^{-(m+1)} \quad \mbox{ on } U_0$.
Therefore, $J(u)=1$ on $U_0$. Since $\phi(1)=0$ as proved in Proposition \ref{prpnzh1}, we obtain the boundary condition that $u=0$ on $\Sigma$. The latter part of Proposition \ref{prpn3d5} follows from the first part and Graham's work (see the first paragraph of this section).
\end{proof}

\section{Proof of Theorem \ref{thm ke metric}}\label{Sec proof of thm ke metric}
We establish the following proposition before proceeding to the proof of Theorem \ref{thm ke metric}.
\begin{prop}\label{prpnzh2}
	Let $n\geq 1$, $k\geq 1$ and $m=n+k$.  For given real numbers $\lambda_1 \leq \cdots \leq \lambda_n <1$, set
	\begin{equation*}
		P(y)=\bigl(y-\frac{2k}{m+1}\bigr)^{k-1}\prod_{i=1}^n \bigl(y-\frac{2k \lambda_i}{m+1}\bigr).
	\end{equation*}
	Let $Q(y)$ be the polynomial satisfying $\frac{dQ}{dy}=(m+1)yP(y)$ and $Q(\frac{2k}{m+1})=0$ (thus $Q$ is a monic polynomial of degree $m+1$ and is uniquely determined). Suppose $\hat{P}$ and $\hat{Q}$ are polynomials defined by
	$$\hat{P}(x)=x^{m-1}P(x^{-1}), \quad \hat{Q}(x)=x^{m+1} Q(x^{-1}).$$
    Then the following conclusions hold:

\begin{itemize}		
\item[(1)] There exists a unique real analytic function $Z=Z(r)$ on $[-1, 1]$ (meaning it extends real analytically to some open interval containing $[-1, 1]$) satisfying the following conditions:
	\begin{equation}\label{eqnzodenz}
		rZ'\hat{P}(Z)+\hat{Q}(Z)=0, \quad Z(1)=0.
	\end{equation}
	Moreover, $Z$ is an even function satisfying $Z(0)=\frac{m+1}{2k}$, $Z'(0)=0$, $Z'<0$ on $(0, 1]$ and $Z'(1)=-1$, $Z''(0)<0.$ Consequently, $Z \in (0, \frac{m+1}{2k})$ on $(-1, 0) \cup (0, 1).$

\item[(2)] Let $\phi(r)=2(\frac{r^{2k-1}}{-Z'\hat{P}(Z)})^{\frac{1}{m+1}}\, Z$.  Then $\phi$ is real analytic on $[-1, 1]$. In addition, $\phi$ is an even function satisfying $\phi>0$ on $(-1, 1)$ and $\phi(1)=0.$ Moreover, $\phi$ satisfies $(m+1)rZ\phi'+(m+1-2kZ)\phi=0$ and $\phi'(1)=-2.$
\end{itemize}
\end{prop}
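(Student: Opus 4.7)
The plan is to augment Proposition \ref{prpnzh1} with an explicit analysis of the behavior of the solution $Z$ (and hence of $\phi$) at $r=0$, showing they extend real analytically across zero. I set $a=\tfrac{2k}{m+1}$. Since $Q'=(m+1)yP$ has a zero of order $k-1$ at $y=a$ and $Q(a)=0$, the polynomial $Q$ has a zero of order exactly $k$ at $a$, so $Q(y)=(y-a)^k\tilde R(y)$ with $\tilde R$ monic of degree $n+1$. Passing to reciprocal polynomials yields the clean factorizations
\begin{equation*}
\hat P(x)=(1-ax)^{k-1}\prod_{i=1}^n(1-a\lambda_i x),\qquad \hat Q(x)=(1-ax)^k\hat{\tilde R}(x),
\end{equation*}
where $\hat{\tilde R}(x):=x^{n+1}\tilde R(x^{-1})$ satisfies $\hat{\tilde R}(0)=1$. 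A quick computation of $Q^{(k)}(a)$ via the Leibniz rule gives $\hat{\tilde R}(1/a)=(m+1)\prod_{i=1}^n(1-\lambda_i)/k$, which is strictly positive by the hypothesis $\lambda_i<1$.

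For part (1), I would first use $Q>0$ on $(a,\infty)$ (since $Q(a)=0$ and $Q'(t)=(m+1)tP(t)>0$ for $t>a$, again using $\lambda_i<1$) together with the cancellation above to conclude $\hat P,\hat Q>0$ on $(0,1/a)$. Standard ODE theory then yields a unique solution of \eqref{eqnzodenz} on $(0,1]$ with $Z'(r)=-\hat Q(Z)/[r\hat P(Z)]<0$ and $Z'(1)=-1$ (since $\hat P(0)=\hat Q(0)=1$). Separating variables gives $\psi(Z):=\int_0^Z \hat P(t)/\hat Q(t)\,dt=-\log r$, and the integrand simplifies to
\begin{equation*}
\frac{\hat P(t)}{\hat Q(t)}=\frac{\prod_{i=1}^n(1-a\lambda_i t)}{(1-at)\hat{\tilde R}(t)},
\end{equation*}
which has a simple pole at $t=1/a$ with residue exactly $-\tfrac12$ (using $\hat{\tilde R}(1/a)$ from above and the identity $k/(a(m+1))=\tfrac12$). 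Thus $\psi(Z)=-\tfrac12\log(1/a-Z)+G(Z)$ with $G$ analytic at $1/a$, so $r^2=(1/a-Z)F(Z)$ with $F$ analytic and nonvanishing at $1/a$. The implicit function theorem makes $Z$ a real analytic function of $r^2$ near $0$ with $Z(0)=1/a=\tfrac{m+1}{2k}$, giving the even analytic extension through $r=0$, together with $Z'(0)=0$ and $Z''(0)=-2/F(1/a)<0$. Combined with the solution on $(0,1]$ and evenness, this produces the desired $Z$ on $[-1,1]$; the range statement $Z\in(0,1/a)$ on $(-1,0)\cup(0,1)$ follows from monotonicity and the boundary values.

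For part (2), I would substitute the expansion $Z(r)=1/a-c_0 r^2+O(r^4)$ with $c_0=1/F(1/a)>0$ into the definition of $\phi$. Since $1-aZ$ vanishes to order $2$ and $Z'$ to order $1$ at $r=0$, the quantity $-Z'(r)\hat P(Z(r))$ factors as $r^{2k-1}$ times a positive, even, real analytic function of $r$ near $0$ (using $\prod(1-\lambda_i)>0$). Hence $\Psi(r):=r^{2k-1}/[-Z'\hat P(Z)]$ extends as a positive even real analytic function across $r=0$, and remains positive on the rest of $[-1,1]$ (from part (1) on $(0,1]$, and direct at $r=\pm 1$, where $Z=0$, $Z'=\mp 1$, $\hat P(0)=1$). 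Taking the $(m+1)$-th root and multiplying by the even analytic $Z$ gives $\phi$ even, real analytic on $[-1,1]$, positive on $(-1,1)$, and vanishing at $\pm 1$. The ODE $(m+1)rZ\phi'+(m+1-2kZ)\phi=0$ and the value $\phi'(1)=-2$ follow from Proposition \ref{prpnzh1} applied on $(0,1]$ and propagate to $[-1,1]$ by analyticity.

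The main obstacle is pinning down the residue of $\hat P/\hat Q$ at $1/a$ to be \emph{precisely} $-\tfrac12$: any deviation would produce a branch singularity of a different order in $\psi$ and wreck the even analyticity of $Z$ in $r$. This exact value is where the algebraic relation $Q'=(m+1)yP$, coupled with the constant-Ricci-eigenvalue hypothesis (which fixes the structure of $P$, hence of $Q$), interacts with the bound $\lambda_i<1$ to yield the real analytic extension used in Theorem \ref{thm ke metric} across the zero section of $E$.
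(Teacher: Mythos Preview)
Your argument is correct and takes a genuinely different route from the paper's. The paper writes $\hat P(x)=(x-\lambda)^{k-1}h(x)$ and $\hat Q(x)=(x-\lambda)^k g(x)$ with $\lambda=\tfrac{m+1}{2k}$, proves the key identity $g(\lambda)=-2h(\lambda)$ (Lemma~\ref{lemma g and h}), and then analyzes the singularity at $r=0$ through two successive substitutions $Z=\lambda+rG$ and $Z=\lambda+r^2W$, studying the ODEs satisfied by $G$ and $W$ (Lemmas~\ref{lm2}--\ref{lm4}) to eventually show $W$ extends analytically through $0$ as an even function. Your approach replaces all of this by a single separation-of-variables step: integrating $\hat P(Z)/\hat Q(Z)\,dZ=-dr/r$ yields $\psi(Z)=-\log r$, and your residue computation of $\hat P/\hat Q$ at $t=1/a$ (equal to $-\tfrac12$) is exactly the reciprocal form of the paper's identity $g(\lambda)=-2h(\lambda)$. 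From this you read off directly that $r^2=(1/a-Z)F(Z)$ with $F$ analytic and nonvanishing, and the inverse function theorem finishes the extension across $r=0$ in one stroke. This is more conceptual: the exponent $2$ in $r^2$ (hence the evenness of $Z$) is visibly dictated by the residue value, whereas in the paper it emerges only after Lemma~\ref{lm2} shows the first-order term $rG$ must vanish.

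One presentational point: when you write that ``standard ODE theory then yields a unique solution of \eqref{eqnzodenz} on $(0,1]$'', this is not quite immediate, since one must rule out $Z$ reaching the singular value $1/a$ at some $r_0>0$ (this is precisely what the paper's Lemma~\ref{lm1} handles by a contradiction argument). However, your separation of variables already does this work: since $\psi:[0,1/a)\to[0,\infty)$ is an analytic diffeomorphism (as $\psi'=\hat P/\hat Q>0$ on $(0,1/a)$ and $\psi(Z)\to+\infty$ as $Z\to 1/a^{-}$ by the simple pole), the formula $Z(r)=\psi^{-1}(-\log r)$ defines the solution globally on $(0,1]$ with $Z((0,1])=[0,1/a)$. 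It would be worth making this explicit, so that the global existence does not appear to rest on an unproved assertion.
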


\begin{rmk}
	For the polynomials $P, Q, \hat{P}$ and $\hat{Q}$ defined in Proposition \ref{prpnzh2}, note that they satisfy $\hat{P}(0)=\hat{Q}(0)=1$, $P>0$ on $(\frac{2k}{m+1}, +\infty)$, $\hat{P}>0$ on $(0, \frac{m+1}{2k})$ and $\frac{d Q}{dy}>0$ on $(\frac{2k}{m+1}, \infty)$, $Q>0$ on $(\frac{2k}{m+1}, \infty)$, $\hat{Q}>0$ on $(0, \frac{m+1}{2k})$ and $\hat{Q}(\frac{m+1}{2k})=0$.
\end{rmk}

\begin{proof}
	The proposition was proved in \cite[Proposition 2.7]{EXX22} for $k=1$. We will extend the ideas in \cite{EXX22} to prove for the case $k\geq 2$.  Writing
\begin{equation}\label{constant lambda}
	\lambda=\frac{m+1}{2k},
\end{equation}
we can express $\hat P$ as follows
	\begin{equation*}
		\hat{P}(x)=\bigl(1-\frac{x}{\lambda}\bigr)^{k-1}\prod_{i=1}^n \bigl(1-\frac{\lambda_i }{\lambda}x \bigr):=(x-\lambda)^{k-1} h(x).
	\end{equation*}
	Here $h(x)=(-1)^{k-1}\lambda^{-(k-1)}\prod_{i=1}^n \bigl(1-\frac{\lambda_i}{\lambda}x\bigr)$ and it is a polynomial satisfying $h(\lambda)\neq 0$. Note the polynomial $\hat{Q}$ has a zero of order $k$ at $x=\lambda$. Thus we can write $\hat{Q}(x)=(x-\lambda)^k g(x)$ for some polynomial $g$ satisfying $g(\lambda)\neq 0$.

	We introduce the following lemma on the polynomials $h$ and $g$. For two functions $f_1$ and $f_2$, we write $f_1\sim f_2$ as $x\rightarrow \lambda$ if $\lim_{x\rightarrow \lambda} \frac{f_1(x)}{f_2(x)}=1$.
	\begin{lemma}\label{lemma g and h}
		It holds that $g(\lambda)=-2h(\lambda)$ and $(-1)^{k-1}h(\lambda)>0$. In particular, $\frac{\hat{Q}(x)}{\hat{P}(x)}\sim -2 (x-\lambda)$ as $x\rightarrow \lambda$.
	\end{lemma}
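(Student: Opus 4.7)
The plan is to translate the defining ODE $Q'(y) = (m+1)yP(y)$ into a functional identity among $\hat{P}$, $\hat{Q}$, and $\hat{Q}'$, and then extract the value of $g(\lambda)$ by matching leading Taylor coefficients at $x=\lambda$. The sign assertion $(-1)^{k-1} h(\lambda) > 0$ will then be immediate from the explicit product formula for $h$ and the hypothesis $\lambda_i < 1$.

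First I would differentiate $\hat{Q}(x) = x^{m+1} Q(x^{-1})$, substitute $Q'(x^{-1}) = (m+1)x^{-1} P(x^{-1})$, and recognise $\hat{P}(x) = x^{m-1} P(x^{-1})$ to obtain the clean identity
\[
x\,\hat{Q}'(x) \;=\; (m+1)\bigl[\hat{Q}(x) - \hat{P}(x)\bigr].
\]
This is the only computational content of the proof; everything else follows by direct substitution.

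Next I would plug in $\hat{P}(x) = (x-\lambda)^{k-1} h(x)$ and $\hat{Q}(x) = (x-\lambda)^k g(x)$ and compare the coefficients of $(x-\lambda)^{k-1}$ on both sides. The left side expands as $x\bigl[k(x-\lambda)^{k-1} g(x) + (x-\lambda)^k g'(x)\bigr]$, with leading term $\lambda k\, g(\lambda)\,(x-\lambda)^{k-1}$. The right side rewrites as $(m+1)(x-\lambda)^{k-1}\bigl[(x-\lambda) g(x) - h(x)\bigr]$, contributing $-(m+1) h(\lambda)(x-\lambda)^{k-1}$ at leading order. Equating and using $\lambda k = (m+1)/2$ yields at once $g(\lambda) = -2 h(\lambda)$.

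Finally, the explicit formula for $h$ gives
\[
(-1)^{k-1} h(\lambda) \;=\; \lambda^{-(k-1)} \prod_{i=1}^n (1 - \lambda_i),
\]
which is strictly positive because $\lambda > 0$ and every $\lambda_i < 1$ by assumption. The asymptotic $\hat{Q}(x)/\hat{P}(x) \sim -2(x-\lambda)$ as $x \to \lambda$ then follows from $\hat{Q}(x)/\hat{P}(x) = (x-\lambda)\, g(x)/h(x)$ and the already-established $g(\lambda)/h(\lambda) = -2$. No step is particularly difficult; the only genuine insight is spotting the identity $x\hat{Q}' = (m+1)(\hat{Q}-\hat{P})$, after which everything reduces to a leading-term comparison.
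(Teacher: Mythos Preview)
Your proof is correct and follows essentially the same route as the paper. Both derive the identity $x\,\hat Q'(x)=(m+1)\bigl[\hat Q(x)-\hat P(x)\bigr]$ (the paper writes it divided through by $x$) and then read off $g(\lambda)=-2h(\lambda)$ from the leading Taylor coefficient at $x=\lambda$; the paper phrases this last step as differentiating $k-1$ further times and evaluating, while you substitute the factored forms $\hat P=(x-\lambda)^{k-1}h$, $\hat Q=(x-\lambda)^k g$ directly and match the $(x-\lambda)^{k-1}$ term, which is the same computation in slightly cleaner dress.
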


	\begin{proof}
		Note that $(-1)^{k-1}h(\lambda)=\lambda^{-(k-1)}\prod_{i=1}^n \bigl(1-\lambda_i\bigr)$, which is clearly positive as all $\lambda_i$'s are strictly less than $1$. We next prove the first identity. For that, we notice that
		\begin{equation*}
			h(\lambda)=\frac{1}{(k-1)!}\frac{d^{k-1} \hat{P}(x)}{dx^{k-1}}\Big|_{x=\lambda} \quad \mbox{ and } \quad g(\lambda)=\frac{1}{k!} \frac{d^k\hat{Q}(x)}{d x^k}\Big|_{x=\lambda}.
		\end{equation*}
		Since $\hat{Q}(x)=x^{m+1}Q(\frac{1}{x})$, by the definition of polynomial $Q$ we have
		\begin{equation*}
			\frac{d\hat{Q}(x)}{dx}=(m+1)x^mQ\bigl(\frac{1}{x}\bigr)-x^{m-1}Q'\bigl(\frac{1}{x}\bigr)=\frac{(m+1)}{x}\hat{Q}(x)-\frac{(m+1)}{x}\hat{P}(x).
		\end{equation*}
		As $\hat{P}$ and $\hat{Q}$ respectively have a zero of order $(k-1)$ and $k$ at $x=\lambda$, if we further differentiate the above equation $(k-1)$ times and evaluate it at $x=\lambda$, then
		\begin{equation*}
			\frac{d^k\hat{Q}(x)}{dx^k}\Big|_{x=\lambda}=-\frac{(m+1)}{\lambda}\frac{d^{k-1}\hat{P}(x)}{dx^{k-1}}\Big|_{x=\lambda}=-2k!\,h(\lambda).
		\end{equation*}
		It follows that $g(\lambda)=-2h(\lambda)$. As a result, we get
		\begin{equation*}
			\frac{\hat{Q}(x)}{\hat{P}(x)}=\frac{g(x)}{h(x)} (x-\lambda)\sim -2(x-\lambda) \quad \mbox{ as } x\rightarrow \lambda.
		\end{equation*}
		So the proof is completed.
	\end{proof}

	Now we are ready to prove part (1) of Proposition \ref{prpnzh2}. It follows easily from the assumption and elementary ODE theory that the ODE in (\ref{eqnzodenz}) has a real analytic solution $Z$ in some open interval $I$ containing $1$. Since $\hat{P}(0)=1$ and  $\hat{Q}(0)=1$, we see the ODE in (\ref{eqnzodenz}) implies $Z'(1)=-1$. Set
\begin{equation}\label{eq:t0}t_0=\inf \{t \in [0, 1): ~\text{on}~(t,1] \text{ there exists a real analytic solution}~Z~\text{to}~(\ref{eqnzodenz})~\text{with}~Z'<0\}.
\end{equation}
	By definition, $0 \leq t_0 <1$ and  on $(t_0,1]$ there is a real analytic solution $Z$ to (\ref{eqnzodenz})  with $Z'<0$.
	
	\begin{lemma} \label{lm1}
		The number defined in \eqref{eq:t0} satisfies $t_0=0.$ Consequently, on $(0, 1]$ there exists a (unique) real analytic solution $Z$ to \eqref{eqnzodenz} and it satisfies $Z'<0.$
	\end{lemma}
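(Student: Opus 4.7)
My plan is to argue by contradiction: suppose $t_0 > 0$. The first step is to understand the qualitative behavior of $Z$ on $(t_0, 1]$. Since $Z(1) = 0$ and $Z' < 0$, the function $Z$ is monotonically increasing as $r$ decreases from $1$. I would show $Z$ stays strictly less than $\lambda := \frac{m+1}{2k}$ on $(t_0, 1)$: otherwise, by continuity, $Z$ would first attain $\lambda$ at some $r^* \in (t_0, 1)$, and the ODE \eqref{eqnzodenz} combined with the factorization $\hat{P}(Z) = (Z-\lambda)^{k-1} h(Z)$, $\hat{Q}(Z) = (Z-\lambda)^k g(Z)$ (and $h(\lambda), g(\lambda)\neq 0$ from Lemma \ref{lemma g and h}) would force $Z'(r^*) = 0$, contradicting $Z' < 0$. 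Hence the limit $L := \lim_{r \to t_0^+} Z(r)$ exists and lies in $(0, \lambda]$ by monotonicity.

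I would then split into two cases. In the case $L < \lambda$, the ODE $Z' = -\hat{Q}(Z)/(r \hat{P}(Z))$ is real analytic in $(r,Z)$ in a neighborhood of $(t_0, L)$, since $t_0 > 0$ and $\hat{P}, \hat{Q}$ are positive on $(0, \lambda)$ by the remark following Proposition \ref{prpnzh2}. Standard analytic ODE theory then provides a real analytic extension of $Z$ across $r = t_0$ with $Z'(t_0) = -\hat{Q}(L)/(t_0\hat{P}(L)) < 0$; by continuity $Z' < 0$ persists in a neighborhood of $t_0$, contradicting the definition of $t_0$ as an infimum.

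The main obstacle is ruling out the boundary case $L = \lambda$, which I would attack by exploiting the asymptotic in Lemma \ref{lemma g and h}: $\hat{Q}(Z)/\hat{P}(Z) \sim -2(Z - \lambda)$ as $Z \to \lambda$. The ODE rewrites as
\begin{equation*}
\frac{d}{dr}\log\bigl(\lambda - Z(r)\bigr) = -\frac{g(Z(r))}{r\, h(Z(r))}.
\end{equation*}
Fixing $r_1 \in (t_0, 1)$ and integrating from $r$ to $r_1$, the left-hand side tends to $+\infty$ as $r \to t_0^+$ because $\lambda - Z(r) \to 0^+$. On the other hand, the integrand on the right is continuous on the compact interval $[t_0, r_1]$, using $t_0 > 0$ to keep $1/s$ bounded and the fact that $h$ is non-vanishing on $[0, \lambda]$ (its zeros lie at $\lambda/\lambda_i$, which fall outside $[0,\lambda]$ thanks to $\lambda_i < 1$), so the integral remains finite, a contradiction.

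Both cases excluded, $t_0$ must equal $0$. Uniqueness of the real analytic solution $Z$ on $(0, 1]$ with $Z' < 0$ follows from the standard uniqueness theorem for analytic ODEs applied at each point of $(0, 1]$, since $\hat{P}(Z) > 0$ everywhere along the solution.
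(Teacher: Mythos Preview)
Your proposal is correct and follows essentially the same approach as the paper: both argue by contradiction, bound $Z$ above by $\lambda = \frac{m+1}{2k}$ using Lemma~\ref{lemma g and h}, and then treat the two cases $L < \lambda$ (analytic extension) and $L = \lambda$ (integration estimate). The only cosmetic difference is in the $L=\lambda$ case: the paper uses the asymptotic $\hat{Q}/\hat{P}\sim -2(Z-\lambda)$ to derive a differential inequality $(\log(\lambda-Z))'\leq C$, whereas you use the exact reduced ODE $\frac{d}{dr}\log(\lambda-Z)=-g(Z)/(rh(Z))$ and bound the right-hand side directly---the two arguments are equivalent.
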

\begin{proof} Seeking a contradiction, we assume $t_0 >0.$ Since $Z$ is decreasing on $(t_0, 1),$ we conclude that $\mu:=\lim_{r \rightarrow t_0^+} Z(r)>0$ exists (allowing {\em a priori} $\mu=+\infty$ as the limit). We note that $\mu \leq \lambda$, where $\lambda$ is the number introduced in \eqref{constant lambda}. For, if this were not the case, then since $Z(1)=0$ there would exist some $t^* \in (t_0, 1)$ such that $Z(t^*)=\lambda$. By Lemma
	\ref{lemma g and h}, at $r=t^*$ the ODE \eqref{eqnzodenz} gives $Z'(t^*)=0$, contradicting the fact $Z'(t^*)<0$. Therefore, we proceed by examining the following two cases.
	
\textbf{Case I.} Assume $\mu=\lambda$.
In this case, by Lemma \ref{lemma g and h} the ODE \eqref{eqnzodenz} gives
	\begin{equation*}
		-Z'(r)=\frac{\hat{Q}(Z)}{r\hat{P}(Z)}\sim -2\frac{Z-\lambda}{r} \quad \mbox{ as } r \rightarrow t_0^+.
	\end{equation*}
	Thus, there exists some constants $\delta>0$ and $C>0$ such that
$-Z'(r)\leq C(\lambda-Z)$ for any $r\in (t_0, t_0+\delta).$
That is, $(\log(\lambda-Z))'\leq C$. By taking the integral we obtain
	\begin{equation*}
		\log\bigl(\lambda-Z(r)\bigr)\big|_{r=t_0}^t\leq C(t-t_0) \quad \mbox{ for any } t\in (t_0, t_0+\delta).
	\end{equation*}
	But this is impossible as the left hand side is $+\infty$ while the right hand side is a finite number.
	
\textbf{Case II.} Assume $\mu<\lambda$. In this case, $\hat{P}(\mu)>0$ and thus $\frac{\hat{Q}(Z)}{r\hat{P}(Z)}$ is a smooth function at $(r, Z)=(t_0, \mu)$. Therefore, the following initial value problem has a real analytic solution $\widetilde{Z}$ on some open interval $J$ containing $t_0$:
	$$rZ'\hat{P}(Z)+\hat{Q}(Z)=0, \quad Z(t_0)=\mu.$$
	Shrinking $J$ if necessary, we can assume $\widetilde{Z}'<0$ on $J$. By the uniqueness of solutions in the ODE theory, we can glue the previous solution
	with $\widetilde{Z}$ to obtain a real analytic solution to (\ref{eqnzodenz}), still called $Z$, on some open interval containing $[t_0, 1],$ which still satisfies $Z'<0.$ This contradicts the definition of $t_0$.
	
Since in each case there is a contradiction, we must have $t_0=0$ and this proves Lemma \ref{lm1}.
\end{proof}
	
	\smallskip
	
	By Lemma \ref{lm1}, $Z$ is decreasing on $(0, 1)$ and therefore $\mu=\lim_{r \rightarrow 0^+} Z(r)>0$ exists. By the same reasoning as in the proof of Lemma \ref{lm1}, we must have $\mu \leq \lambda=\frac{m+1}{2k}$. In fact, it holds that $\mu=\lambda$. Assume $\mu < \lambda.$ Note $\hat{P}, \hat{Q}>0$ on $[0, \mu]$. Since $-Z'=\frac{\hat{Q}}{r\hat{P}}$, we have $-Z' \geq \frac{c}{r}$ on $(0, 1)$ for some positive constant $c$. This contradicts the fact that $Z$ is bounded on $(0, 1).$  Hence we must have $\mu = \lambda$, i.e., $\lim_{r \rightarrow 0^+} Z(r)=\lambda.$ Thus, $Z$ is decreasing from $\lambda$ to $0$ on $[0, 1]$.
	
	We write $Z(r)=\lambda+rG(r)$ for some real analytic function $G$ on $(0, 1]$. It is clear that $G<0$ on $(0,1]$ as $Z$ is decreasing from $\lambda$ to $0$ on $[0, 1]$. We have the following lemma on $G$.
	\begin{lemma}\label{lm2}
		It holds that $\lim_{r \rightarrow 0^+} G(r)=0.$
	\end{lemma}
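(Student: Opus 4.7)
The plan is to combine the explicit factorizations furnished by Lemma \ref{lemma g and h} with a one-step Gr\"onwall-type bootstrap to upgrade $W(r) := Z(r)-\lambda \to 0$ (which we already know) to a polynomial decay rate $|W(r)| = O(r^{3/2})$, which immediately implies $G(r) = W(r)/r \to 0$. First, I would substitute $\hat{P}(Z) = W^{k-1} h(Z)$ and $\hat{Q}(Z) = W^k g(Z)$ into the ODE \eqref{eqnzodenz}. Since $W<0$ on $(0,1]$ and $h(\lambda)\neq 0$ forces $h(Z(r)) \neq 0$ on a deleted neighborhood of $0$, I may cancel $W^{k-1} h(Z)$ and reduce the nonlinear ODE to the \emph{linear} equation
\begin{equation*}
  r W'(r) = -\frac{g(Z(r))}{h(Z(r))} \, W(r).
\end{equation*}

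Next, set $B(r) := -g(Z(r))/h(Z(r))$. By Lemma \ref{lemma g and h}, $g(\lambda) = -2h(\lambda)$, so $B(r)\to 2$ as $r\to 0^+$. Pick $r_0\in(0,1)$ so small that $B(r) \geq 3/2$ on $(0,r_0]$. Because both $W$ and $W'$ are negative on $(0,1]$, we have $(\log|W|)'(r) = W'(r)/W(r) = B(r)/r \geq 3/(2r)$ there, and integrating from $r$ to $r_0$ yields
\begin{equation*}
  |W(r)| \leq |W(r_0)|\left(\frac{r}{r_0}\right)^{3/2}.
\end{equation*}
Hence $|G(r)| = |W(r)|/r \leq |W(r_0)|\, r_0^{-3/2}\, r^{1/2} \to 0$ as $r\to 0^+$, which is exactly the assertion of Lemma \ref{lm2}.

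I do not expect a genuine obstacle in executing this plan; the whole argument is structurally a comparison with the model ODE $rW' = 2W$ (solutions $W \sim r^2$). The substantive inputs are the precise vanishing orders $k-1$ and $k$ of $\hat{P}$ and $\hat{Q}$ at $\lambda$, together with the normalization $-g(\lambda)/h(\lambda) = 2 > 1$; it is the strict inequality $2>1$ that makes the decay faster than linear and therefore delivers $G\to 0$. The only small technical point is justifying the divisions by $W^{k-1}$ and $h(Z)$, which is handled by the fact that $W$ is strictly negative on $(0,1]$ and $h$ is nonvanishing near $\lambda$.
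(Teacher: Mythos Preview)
Your proof is correct and follows essentially the same approach as the paper: factor $\hat P(Z)=(Z-\lambda)^{k-1}h(Z)$ and $\hat Q(Z)=(Z-\lambda)^k g(Z)$, cancel the common power, reduce to a linear first-order ODE, and use $g(\lambda)=-2h(\lambda)$ to obtain a Gr\"onwall-type decay estimate. The only cosmetic difference is that the paper works directly with $G$ (obtaining $G'/G \to 1/r$) whereas you work with $W:=Z-\lambda=rG$ (obtaining $W'/W \to 2/r$); note that your symbol $W$ clashes with the paper's later $W$ defined by $Z=\lambda+r^2W$, so you may want to rename it.
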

	\begin{proof}
		Note
		\begin{align*}
			\hat{P}(Z)=&(Z-\lambda)^{k-1} h(Z)=r^{k-1}G^{k-1} h(Z),\\
			\hat{Q}(Z)=&(Z-\lambda)^k g(Z)=r^kG^k g(Z).
		\end{align*}
		By substituting these identities together with $Z(r)=\lambda+rG(r)$ into the ODE in \eqref{eqnzodenz}, we obtain
		\begin{equation*}
			(rG)'h(Z)+Gg(Z)=0 \quad \mbox{ on } (0,1).
		\end{equation*}
		We can rewrite it into
		\begin{equation*}
			\frac{G'}{G}=-\frac{h(Z)+g(Z)}{rh(Z)} \quad \mbox{ on } (0,1).
		\end{equation*}
		By Lemma \ref{lemma g and h}, $-\frac{h(Z)+g(Z)}{h(Z)}\rightarrow 1$ as $r\rightarrow 0^+$. Consequently, there exists some constant $\delta>0$ such that
		\begin{equation*}
			\frac{G'}{G}>\frac{1}{2r} \quad \mbox{ for any } r\in (0, \delta).
		\end{equation*}
		As a result, $\frac{-G}{\sqrt{r}}$ is increasing on $(0, \delta)$, which in particular implies that $\frac{-G}{\sqrt{r}}$ is bounded from above on $(0, \delta)$. Therefore, $\lim_{r\rightarrow 0^+} G(r)=0$.
	\end{proof}
	
We may now further write $Z$ as $Z(r)=\lambda+r^2 W(r)$ for some real analytic function $W$ on $(0, 1]$. Clearly, $W=rG<0$ on $(0, 1]$, and $W(1)=-\lambda$. In addition, we have the following.
	\begin{lemma}\label{lm3}
		The limit $\lim_{r\rightarrow 0^+} W(r)$ exists and it is a negative number.
	\end{lemma}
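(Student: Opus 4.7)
The plan is to derive a first-order ODE directly for $G$ (hence for $W$) and to show that the coefficient of the singular $1/r$ term in the logarithmic derivative of $W$ is killed by the precise identity $g(\lambda) = -2h(\lambda)$ supplied by Lemma \ref{lemma g and h}. First I would substitute $Z - \lambda = rG$ into the factorizations $\hat{P}(Z) = (Z-\lambda)^{k-1} h(Z)$ and $\hat{Q}(Z) = (Z-\lambda)^k g(Z)$, and follow the computation already performed in the proof of Lemma \ref{lm2} to reduce \eqref{eqnzodenz} to
\begin{equation*}
    (rG)' h(Z) + G\, g(Z) = 0.
\end{equation*}
Since $G < 0$ on $(0,1]$ and $h(Z) \to h(\lambda) \neq 0$ as $r \to 0^+$, this can be rearranged as
\begin{equation*}
    \frac{G'}{G} = -\frac{1}{r}\cdot\frac{h(Z)+g(Z)}{h(Z)}.
\end{equation*}

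The central step — and the one that really uses the hypotheses — is the Taylor expansion of $(h(Z)+g(Z))/h(Z)$ about $Z=\lambda$. By Lemma \ref{lemma g and h} we have $h(\lambda) + g(\lambda) = -h(\lambda)$, so a direct expansion yields a constant $A$ with
\begin{equation*}
    \frac{h(Z)+g(Z)}{h(Z)} = -1 + A(Z-\lambda) + O((Z-\lambda)^2) = -1 + ArG + O(r^2 G^2).
\end{equation*}
Substituting and rearranging, this produces
\begin{equation*}
    \bigl(\log(-G/r)\bigr)' = \frac{G'}{G} - \frac{1}{r} = -AG + O(rG^2),
\end{equation*}
so the singular $1/r$ term has been removed. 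I expect this cancellation to be the core of the argument: the whole point of Lemma \ref{lemma g and h} is that the leading coefficient on the right-hand side of the previous display is exactly $-1$, which is precisely what is needed to make the logarithmic derivative of $W = G/r$ integrable at $r=0$.

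To conclude, I would use that $G$ is continuous on $(0,1]$ with $G \to 0$ at $0^+$ (Lemma \ref{lm2}), so $G$ is bounded on some interval $(0,\delta]$ and hence the right-hand side of the previous equation is bounded there. Therefore $u(r) := \log(-W(r)) = \log(-G(r)/r)$ is Lipschitz on $(0,\delta]$, so it extends continuously to $r=0$ with some finite limit $L \in \mathbb{R}$. Exponentiating gives $\lim_{r \to 0^+} W(r) = -e^L$, which is a finite strictly negative number, as claimed. No further subtleties are anticipated beyond the cancellation step; everything else is bookkeeping with Taylor expansions and elementary ODE estimates.
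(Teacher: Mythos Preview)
Your argument is correct and is essentially the same as the paper's: both isolate the logarithmic derivative of $W$, use the identity $g(\lambda)=-2h(\lambda)$ from Lemma~\ref{lemma g and h} to cancel the $1/r$ singularity, and then integrate a bounded (indeed continuous) function to obtain a finite negative limit. The only cosmetic difference is that the paper substitutes $Z=\lambda+r^2W$ directly to obtain $W'/W=-\bigl(2h(Z)+g(Z)\bigr)/(r\,h(Z))$ and observes that the numerator, being a polynomial in $rG$ with vanishing constant term, makes the quotient continuous on $[0,1]$; you instead keep the ODE for $G$ and recover $W=G/r$ at the end via a Taylor expansion, which amounts to the same cancellation.
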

	\begin{proof}
	We first note that
		\begin{align*}
			\hat{P}(Z)=&(Z-\lambda)^{k-1} h(Z)=r^{2k-2}W^{k-1} h(Z),\\
			\hat{Q}(Z)=&(Z-\lambda)^k g(Z)=r^{2k}W^k g(Z).
		\end{align*}
	Combining this with $Z=\lambda+r^2 W$ and the ODE in \eqref{eqnzodenz}, we obtain
	\begin{equation*}
		r(r^2 W)' h(Z)+(r^2W)g(Z)=0 \quad \mbox{ for } r\in (0,1).
	\end{equation*}
	As $h$ is nonvanishing on $[0, \lambda]$, we can simplify the above equation into
	\begin{equation}\label{eqnzw}
		W'=-\frac{2h(Z)+g(Z)}{r\,h(Z)} W \quad \mbox{ for } r\in (0,1).
	\end{equation}
	Recalling $Z=\lambda+rG$, we have $2h(Z)+g(Z)=2h(\lambda+rG)+g(\lambda+rG)$. As $h$ and $g$ are both polynomials, $2h(\lambda+rG)+g(\lambda+rG)$ is a polynomial in $rG$. Moreover, the constant term equals $2h(\lambda)+g(\lambda)=0$ by Lemma \ref{lemma g and h}. By Lemma \ref{lm2}, we deduce that
$$
f(r):=\frac{2h(\lambda + rG)+g(\lambda + rG)}{rh(\lambda+rG)}
$$
extends to a continuous function on $[0, 1].$ 
	Then using (\ref{eqnzw}) we obtain
	$$\ln (-W(r))=\ln (-W(1)) +\int_{r}^1 f(t)dt=\ln \lambda +\int_{r}^1 f(t)dt \quad \mbox{ for any } r \in (0, 1) $$
	Consequently, $\lim_{r \rightarrow 0^+} W(r)=-\lambda\exp{(\int_0^1 f(t) dt)}$, which is a negative real number.
	\end{proof}

Now $W$ naturally extends to a continuous function on $[0, 1]$. Set $a=W(0)=\lim_{r \rightarrow 0^+} W(r)$. We have the following lemma.
\begin{lemma}\label{lm4}
	There exists a unique real analytic function $T_0(r)$ at $r=0$ satisfying the following initial value problem:
	\begin{equation}\label{eqntp}
		T'=-\frac{2h(\lambda + r^2 T)+g(\lambda + r^2 T)}{r\,h(\lambda + r^2 T)}T, \quad T(0)=a.
	\end{equation}
	Moreover, the function is even on $(-\epsilon, \epsilon)$ for some small $\epsilon>0$.
\end{lemma}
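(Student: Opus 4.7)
The plan is to remove the apparent singularity at $r=0$ using Lemma \ref{lemma g and h}, then apply standard analytic ODE theory for existence/uniqueness, and finally deduce evenness by a uniqueness argument.

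First, I would introduce the polynomial $p(s) := 2h(\lambda + s) + g(\lambda + s)$. By Lemma \ref{lemma g and h}, $p(0) = 2h(\lambda) + g(\lambda) = 0$, so I can factor $p(s) = s\, q(s)$ for a polynomial $q$ (explicitly $q(0) = p'(0) = 2h'(\lambda) + g'(\lambda)$). Substituting $s = r^2 T$ yields
\begin{equation*}
    \frac{2h(\lambda + r^2 T) + g(\lambda + r^2 T)}{r\, h(\lambda + r^2 T)} \;=\; \frac{r^2 T\, q(r^2 T)}{r\, h(\lambda + r^2 T)} \;=\; \frac{r\, T\, q(r^2 T)}{h(\lambda + r^2 T)}.
\end{equation*}
Since $h(\lambda) \neq 0$ by Lemma \ref{lemma g and h}, the right-hand side is real analytic in $(r, T)$ in a neighborhood of $(0, a)$. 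Consequently the singular ODE in \eqref{eqntp} is equivalent to the regular analytic ODE
\begin{equation*}
    T' = -\frac{r\, T^2\, q(r^2 T)}{h(\lambda + r^2 T)}, \qquad T(0) = a,
\end{equation*}
to which the Cauchy--Kovalevskaya theorem (or classical Picard iteration in the analytic category) applies to yield a unique real analytic solution $T_0(r)$ on some interval $(-\epsilon, \epsilon)$.

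To establish evenness, I would define $\widetilde{T}(r) := T_0(-r)$ and verify that $\widetilde{T}$ solves the same initial value problem. Indeed, $\widetilde{T}(0) = T_0(0) = a$, and applying the chain rule gives
\begin{equation*}
    \widetilde{T}'(r) = -T_0'(-r) = \frac{(-r)\, T_0(-r)^2\, q(r^2 T_0(-r))}{h(\lambda + r^2 T_0(-r))} \cdot (-1) = -\frac{r\, \widetilde{T}(r)^2\, q(r^2 \widetilde{T}(r))}{h(\lambda + r^2 \widetilde{T}(r))},
\end{equation*}
where I used that $r^2$ and $T_0(-r)^2$ are even in $r$ while the explicit $r$ factor supplies the needed sign. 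Hence $\widetilde{T}$ and $T_0$ satisfy the same analytic initial value problem, so uniqueness forces $\widetilde{T} = T_0$, i.e., $T_0(-r) = T_0(r)$.

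The only subtle point is the removal of the singularity, which hinges crucially on the identity $g(\lambda) = -2h(\lambda)$ already established in Lemma \ref{lemma g and h}; without it the $1/r$ factor would produce a genuine singular ODE of Fuchsian type, and analyticity at $r = 0$ would not be automatic. Once the singularity is eliminated, both existence/uniqueness and evenness are routine.
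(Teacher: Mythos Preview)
Your proof is correct and follows essentially the same approach as the paper: both use the identity $2h(\lambda)+g(\lambda)=0$ from Lemma~\ref{lemma g and h} to remove the apparent $1/r$ singularity (the paper records this as Remark~\ref{rmk16}), then invoke standard analytic ODE theory for existence and uniqueness, and finally deduce evenness by observing that $T_0(-r)$ solves the same initial value problem. Your write-up merely spells out the factorization $p(s)=sq(s)$ and the evenness verification in more detail than the paper does.
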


\begin{rmk}\label{rmk16}
	Note $2h(\lambda + r^2 T)+g(\lambda + r^2 T)$ is a polynomial in $r^2T$, whose constant term equals $2h(\lambda)+g(\lambda)=0$ by
	Lemma \ref{lemma g and h}. Therefore $(2h(\lambda + r^2 T)+g(\lambda + r^2 T))/r$ is a polynomial in $r$ and $T$.
\end{rmk}

\begin{proof}[Proof of Lemma $\ref{lm4}$] By Remark \ref{rmk16}, the right hand side of the ODE in (\ref{eqntp}) is real analytic in a neighborhood of $(r, T)=(0, a)$. Therefore the existence and uniqueness of the solution, as well as its real analyticity, follow from elementary ODE theory. Note if $T_0$ is a solution  to the initial value problem \eqref{eqntp}, then so is $T_0(-r)$. By uniqueness of the solution, $T_0$ is an even function.
\end{proof}

Let $T_0: (-\epsilon, \epsilon) \rightarrow \mathbb{R}$ be as in Lemma \ref{lm4} and recall the function $W$ defined before Lemma \ref{lm3}. 
Note $T_0$ and $W$ are both functions in $C^{\omega}(0,\epsilon)\cap C[0, \epsilon)$ satisfying the following ODE:
\begin{equation}\label{eqntwr}
	T'=-\frac{2h(\lambda + r^2 T)+g(\lambda + r^2 T)}{rh(\lambda + r^2 T)}T~\text{on}~(0, \epsilon), \quad T(0)=a.
\end{equation}
By basic ODE theory (cf. the proof of Lemma 2.13 in \cite{EXX22}), it follows that $W=T_0$ on $[0, \epsilon)$.



We now continue the proof of Proposition \ref{prpnzh2}. Let $\lambda, T_0$ be as above and set $\Psi=\lambda + r^2 T_0$. Then $\Psi$ is a real analytic even function on $(-\epsilon, \epsilon).$ Moreover, $\Psi=Z$ on $[0, \epsilon)$.
Therefore we can glue $Z$ with $\Psi$, and then apply the even extension to obtain a real analytic function on $[-1, 1]$, which we still denote by $Z$. It is clear that this new function $Z$ still satisfies the ODE in (\ref{eqnzodenz}). Moreover, $Z''(0)=2W(0)=2a<0$. This proves part (1) of Proposition \ref{prpnzh2}. 

We next prove part (2) of Proposition \ref{prpnzh2}. Recall $Z(r)=\lambda+r^2W(r)$ and $\hat{P}(Z)=(Z-\lambda)^{k-1}h(Z)=r^{2k-2}W^{k-1}h(Z)$. It follows that
\begin{equation*}
	\frac{r^{2k-1}}{-Z'\hat{P}(Z)}=\frac{r^{2k-1}}{-(2rW+r^2W')r^{2k-2}W^{k-1}h(Z)}=\frac{1}{-(2W+rW')W^{k-1} h(Z)}.
\end{equation*}
At $r=0$, $-(2W+rW')W^{k-1} h(Z)=-2W^k(0)\, h(\lambda)>0$ since $W(0)<0$ by Lemma \ref{lm3} and $(-1)^{k-1}h(\lambda)>0$ by  Lemma \ref{lemma g and h}. Hence $\frac{r^{2k-1}}{-Z'\hat{P}(Z)}$ is real analytic at $r=0$. By the definition of $\phi$ and the properties of $Z$ in part (1), $\phi$ is real analytic and even on $[-1, 1]$. It is also clear that $\phi>0$ on $(-1, 1)$ and $\phi(1)=0$. The latter assertion in part (2) can be proved identically as in Proposition \ref{prpnzh1}. This finishes the proof of Proposition \ref{prpnzh2}.	
\end{proof}

We are now ready to prove Theorem \ref{thm ke metric}.
\begin{proof}[Proof of Theorem $\ref{thm ke metric}$]
	Choose a coordinates chart $(D, z)$ of $M$ together with a local frame $\{e_{\alpha}\}_{\alpha=1}^k$ of $E$ over $D$. Writing $\pi: E \rightarrow M$ for the canonical fiber projection, we have $$\pi^{-1}(D)=\Big\{\sum_{\alpha=1}^k\xi_{\alpha} e_{\alpha}(z): (z, \xi) \in D \times \mathbb{C}^k \Big\}.$$
Under this trivialization, the ball bundle $B(E)$ and the sphere bundle $S(E)$ over $D$ can be expressed as follows:
\begin{align*}
	B(E) \cap \pi^{-1}(D) =&\Big\{w=(z, \xi) \in D \times \mathbb{C}^k: \sum_{\alpha, \beta=1}^k h_{\alpha\bar{\beta}}(z, \Ol{z}) \xi_{\alpha}\oo{\xi_{\beta}}<1 \Big\},\\
	S(E) \cap \pi^{-1}(D) =&\Big\{w=(z, \xi) \in D \times \mathbb{C}^k: \sum_{\alpha, \beta=1}^k h_{\alpha\bar{\beta}}(z, \Ol{z}) \xi_{\alpha}\oo{\xi_{\beta}}=1 \Big\}.
\end{align*}
Here $h_{\alpha\bar{\beta}}(z)=h_{\alpha\bar{\beta}}(z, \Ol{z})=h(e_{\alpha}, e_{\beta})$. In the local coordinates, we write $g=\sum_{i, j=1}^n g_{i\Ol{j}} dz_i\otimes d\oo{z_j}$. As $g$ is induced by $-\Ric(E, h)$, we have $g_{i\bar{j}}=\frac{\partial^2 \log H}{\partial z_i \partial \oo{z_j}}$ where $H=\det(h_{\alpha\bar{\beta}})$. Let $G(z)=G(z, \Ol{z})=\det (g_{i\Ol{j}})>0$. We let $\lambda_1\leq \cdots \leq \lambda_n<1$ be the Ricci eigenvalues of $(M, g)$ and $\phi$ be the function resulting from Proposition \ref{prpnzh2}. In the local coordinates, the K\"ahler form $\widetilde{\omega}$ in Theorem \ref{thm ke metric} is given by $\widetilde{\omega}=-i \partial \Ol{\partial} \log u$, where $u(w)=k^{\frac{n}{m+1}}(GH)^{-\frac{1}{m+1}}\phi(|w|_h)$. 
Since $\phi$ is real analytic and even on $[-1,1]$, $u$ is smooth in a neighborhood of $\Ol{B(E)} \cap \pi^{-1}(D)$. Consequently, $\widetilde{\omega}$ is a smooth \k form on $\oo{B(E)}\cap \pi^{-1}(D)$.
By repeating the proof of Proposition \ref{prpn3d5} (and the smoothness of $u$), it follows that 
$u=0~\text{on}~S(E) \cap \pi^{-1}(D)$ and
$J(u)=1~\text{on}~B(E) \cap \pi^{-1}(D).$

Since $J(u)=1$, or equivalently, $\det \big((-\log u)_{s\Ol{t}}\big)_{1 \leq s, t \leq m}=u^{-(m+1)}$ in $B(E) \cap \pi^{-1}(D),$ and $u$ is a local defining function of some strongly pseudoconvex piece of the boundary, we conclude that $\widetilde{\omega}$ is positive definite in $B(E) \cap \pi^{-1}(D)$. 
Also $J(u)=1$ implies that the metric $\widetilde{g}$ induced by $\widetilde{\omega}$ has constant Ricci curvature $-(m+1).$
Since the coordinates chart $D$ is arbitrarily chosen, $\widetilde{g}$ is a \ke metric in $B(E)$.

It remains to prove that the metric $\widetilde{g}$ is complete on $B(E)$. By the Hopf-Rinow Theorem, it suffices to show $(B(E), \widetilde{g})$ is geodesically complete.
Let $\gamma: [0,a) \rightarrow B(E)$ be a  non-extendible geodesic in $B(E)$ of unit speed with respect to $\widetilde{g}$. We only need to show that $a=+\infty$, that is, $\gamma$ has infinite length. For that, we first establish the following lemma.

\begin{lemma}\label{lm3d15}
	The metric $\widetilde{g}$ satisfies
	\begin{equation}\label{lm3d15 eq}
		\widetilde{g}\geq -\frac{1}{m+1} \pi^*(\Ric)+\frac{1}{m+1}\pi^*(g).
	\end{equation}
	Consequently, $\widetilde{g} \geq \frac{1-\lambda_n}{m+1} \pi^{*}(g)$ in $B(E)$.
\end{lemma}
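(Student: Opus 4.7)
The plan is to use the explicit formula for $\widetilde{\omega}$ given in Theorem \ref{thm ke metric}, namely
\begin{equation*}
\widetilde{\omega} = -\tfrac{1}{m+1}\pi^*(\Ric) + \tfrac{1}{m+1}\pi^*(\omega) - i\partial\bdel\log\phi(|w|_h),
\end{equation*}
so that the inequality \eqref{lm3d15 eq} is equivalent to $-i\partial\bdel\log\phi(|w|_h) \geq 0$ on $B(E)$, i.e., to $-\log\phi(|w|_h)$ being plurisubharmonic there. To verify this at an arbitrary point $p=(q,\xi_0)\in B(E)$, I would choose the same adapted local coordinates $(z,\xi)$ used in the proof of Lemma \ref{lemma compute the Hessian det}: a chart on $M$ around $q$ and a local frame of $E$ near $q$ with $z(q)=0$, $h_{\alpha\oo{\beta}}(0)=\delta_{\alpha\beta}$ and $dh_{\alpha\oo{\beta}}(0)=0$. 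The Hessian of $-\log\phi(|w|_h)$ at $(0,\xi)$ is then obtained from the Case 1--Case 3 formulas of that lemma by subtracting the Hessian of $\tfrac{1}{m+1}\log(GH)$, which in these coordinates contributes $-\tfrac{1}{m+1}R_{i\oo{j}}+\tfrac{1}{m+1}g_{i\oo{j}}$ on the base block and zero on the mixed and fiber blocks.

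Carrying out this subtraction, the base block of $\bigl(-\log\phi(|w|_h)\bigr)_{s\oo{t}}|_{(0,\xi)}$ equals $\tfrac{(m+1)Y-2k}{2k(m+1)}\,g_{i\oo{j}}$, the mixed block vanishes, and the fiber block coincides with the Case 3 expression
\begin{equation*}
\tfrac{1}{2X^2}\bigl(Y-\tfrac{2k}{m+1}\bigr)\delta_{\alpha\beta}+\bigl(\tfrac{XY'}{4}-\tfrac{Y}{2}+\tfrac{k}{m+1}\bigr)\tfrac{\oo{\xi_\alpha}\xi_\beta}{X^4}.
\end{equation*}
A direct computation diagonalizes this fiber block with respect to the standard Hermitian inner product on $\mathbb{C}^k$: in the direction $\oo{\xi}$ the two middle terms cancel and the eigenvalue is $\tfrac{Y'}{4X}$, while on the orthogonal complement of $\oo{\xi}$ (of dimension $k-1$) the eigenvalue is $\tfrac{Y-2k/(m+1)}{2X^2}$.

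Positive semidefiniteness of all three blocks therefore reduces to the two scalar inequalities $Y\geq \tfrac{2k}{m+1}$ and $Y'\geq 0$, both of which come straight from Proposition \ref{prpnzh2}: the range $Z\in(0,\tfrac{m+1}{2k}]$ on $[0,1)$ gives $Y=1/Z\geq \tfrac{2k}{m+1}$, and $Z'\leq 0$ gives $Y'=-Z'/Z^2\geq 0$. The one subtle point is that $\tfrac{Y-2k/(m+1)}{2X^2}$ and $\tfrac{Y'}{4X}$ appear singular on the zero section $\{X=0\}$; but the parametrization $Z=\tfrac{m+1}{2k}+X^2 W$ from the proof of Proposition \ref{prpnzh2} (with $W(0)<0$) shows that both ratios extend real-analytically as functions of $X^2$ and take strictly positive values at $X=0$, so the bounds remain valid everywhere on $B(E)\cap \pi^{-1}(D)$.

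For the second assertion, the very definition of the Ricci eigenvalues $\lambda_1\leq\cdots\leq\lambda_n$ gives $\Ric\leq \lambda_n g$ as Hermitian forms on $M$, hence $-\tfrac{1}{m+1}\Ric+\tfrac{1}{m+1}g\geq \tfrac{1-\lambda_n}{m+1}g$. Pulling back under $\pi$ and combining with \eqref{lm3d15 eq} yields $\widetilde{g}\geq \tfrac{1-\lambda_n}{m+1}\pi^*(g)$, as required. The main obstacle is really just the careful bookkeeping for the Hessian blocks together with the handling of the removable singularities at $X=0$; all the heavy lifting has already been done, in Lemma \ref{lemma compute the Hessian det} for the Hessian and in Proposition \ref{prpnzh2} for the ODE analysis of $Z$ and $\phi$.
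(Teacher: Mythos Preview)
Your proof is correct and uses the same adapted coordinates and block-diagonal Hessian computation from Lemma \ref{lemma compute the Hessian det} that the paper does. The one genuine difference is in how the fiber block is handled. You rewrite \eqref{lm3d15 eq} as $-i\partial\bar\partial\log\phi(|w|_h)\geq 0$ and verify this directly by diagonalizing the fiber block, which requires both $Y\geq \tfrac{2k}{m+1}$ and $Y'\geq 0$ from Proposition \ref{prpnzh2}, together with the $Z=\tfrac{m+1}{2k}+r^2W$ parametrization to clear the apparent singularity at $X=0$. The paper instead exploits that $\widetilde{g}$ has already been shown to be positive definite (earlier in the proof of Theorem \ref{thm ke metric}, via $J(u)=1$ and strong pseudoconvexity of $S(E)$): since $(\widetilde{g}_{s\bar t})$ is block diagonal and positive definite, the fiber block is automatically positive definite, so $\widetilde{g}\geq$ its base block, and then only the inequality $Y\geq \tfrac{2k}{m+1}$ is needed to compare $\tfrac{Y}{2k}g_{i\bar j}$ with $\tfrac{1}{m+1}g_{i\bar j}$; the case $X=0$ is dispatched by continuity. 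Your route is more self-contained (it does not rely on the prior positivity of $\widetilde{g}$) and yields the slightly stronger statement that $-\log\phi(|w|_h)$ is plurisubharmonic on $B(E)$, at the cost of the extra eigenvalue computation and the explicit removable-singularity analysis. The paper's route is shorter because it recycles work already done.
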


\begin{proof}
	Since the validity of \eqref{lm3d15 eq} is independent of the choice of local coordinates chart of $B(E)$, it suffices to establish \eqref{lm3d15 eq} in an arbitrary coordinate chart. Given $p\in B(E)$, recall that in the proof of Lemma \ref{lemma compute the Hessian det}, we have proved that there exist some local coordinates $w=(z, \xi)$, in which $z(\pi(p))=0$ and the metric $\widetilde{g}_{s\bar{t}}$ at $w=(0, \xi)$ with $0<|w|_h<1$ can be expressed as
	\begin{equation}\label{metric on the bundle eq}
		\widetilde{g}_{s\bar{t}}=
		\begin{dcases}
			-\frac{1}{m+1}R_{i\bar{j}}(0)+\frac{Y}{2k}g_{i\bar{j}}(0) & \mbox{ for } 1\leq s, t\leq n, \\
			\frac{1}{2|\xi|^2}\bigl(Y-\frac{2k}{m+1}\bigr)\delta_{\alpha\beta}+\bigl(\frac{XY'}{4}-\frac{Y}{2}+\frac{k}{m+1}\bigr)\frac{\oo{\xi_{\alpha}}\xi_{\beta}}{|\xi|^4} & \mbox{ for } n+1\leq s, t\leq n+k, \\
			0 & \mbox{ otherwise, }
		\end{dcases}
	\end{equation}
	where $i=s, j=t$ when $1\leq s, t\leq n$ and $\alpha=s-n, \beta=t-n$ when $n+1\leq s, t\leq n+k$. Recall $X=X(w)=(\sum_{\alpha, \beta=1}^n h_{\alpha\bar{\beta}}\xi_{\alpha}\oo{\xi_{\beta}})^{1/2}$, $Y=Y(w)=\frac{2k}{m+1}-X\frac{\phi'(X)}{\phi(X)}$ and $\phi$ is defined in Proposition \ref{prpnzh2}. As in \S\ref{Sec proof of thm obstruction flat}, by \eqref{eqn3d5} we have $Y=y(r)|_{r=X}=\frac{1}{Z}|_{r=X}$. Furthermore, by Proposition \ref{prpnzh2} it follows that $Y(w)\geq \frac{2k}{m+1}$ for any $w\in B(E)$. Note that $(\widetilde{g}_{s\bar{t}})_{1\leq s, t\leq n+k}$ is a block diagonal matrix. As it is positive definite, the diagonal blocks, $(\widetilde{g}_{st})_{1\leq s, t\leq n}$ and $(\widetilde{g}_{st})_{n+1\leq s, t\leq n+k}$, are also individually positive definite. As a result,
	\begin{equation*}
		\widetilde{g}\geq \sum_{i, j=1}^n\widetilde{g}_{i\bar{j}} dz_i\otimes d\oo{z_j}\geq -\frac{1}{m+1} \pi^*(\Ric)+\frac{1}{m+1}\pi^*(g) ~\mbox{ at any $w=(0, \xi)$ with $0<|w|_h<1$}.
	\end{equation*}
	By continuity, the above actually holds at any $w=(0, \xi)$ with $|w|_h<1$, and thus \eqref{lm3d15 eq} is proved. Finally, by the assumption on the Ricci eigenvalues, we have $R_{i\bar{j}}\leq \lambda_n g_{i\bar{j}}$ with $\lambda_n<1$. The latter part of the lemma follows easily.
\end{proof}

With Lemma \ref{lm3d15}, the remaining part of the proof is identical to that of Theorem 1.4 in \cite{EXX22}. We omit the details.
\end{proof}

\section{Proofs of Corollaries}\label{Sec an interesting case}
In this section, we consider the case where the base manifold $M$ is a domain $D$ in $\mathbb{C}^n$, and prove Corollary \ref{cor CY solution} and \ref{cor homogeneous domain}, as well as Proposition \ref{prop egg domain}. We also exhibit some explicit examples as applications.

We first prove Corollary \ref{cor CY solution}.
\begin{proof}[Proof of Corollary $\ref{cor CY solution}$]
	Let $L=D\times \mathbb{C}$ be the trivial line bundle over $D$. By the assumption on $h$, the Hermitian line bundle $(L, h)$, 
	is negative. Take the Hermitian vector bundle $(E, h_E)$ as $(L, h)\oplus \cdots \oplus (L, h)$, the direct sum of $k$ copies of $(L, h)$. Then the ball bundle $B(E)$ and the sphere bundle $S(E)$ of $(E, h_E)$ are respectively given by
	\begin{align*}
		B(E)=&\bigl\{ w=(z, \xi)\in D\times \mathbb{C}^k: |\xi|^2h(z,\bar{z})-1<0 \bigr\},\\
		S(E)=&\bigl\{ w=(z, \xi)\in D\times \mathbb{C}^k: |\xi|^2h(z,\bar{z})-1=0 \bigr\}.
	\end{align*}
	Note that $B(E)$ coincides with the domain $\Omega\subset \mathbb{C}^m$ (recall $m=n+k$) as defined in \eqref{ball bundle for domain case}, and $S(E)$ coincides with the hypersurface $\Sigma$ in $\mathbb{C}^m$ defined by \eqref{sphere bundle for domain case}. In addition, the Hermitian vector bundle $(E, h_E)$ is curvature split by Proposition \ref{curvature split when the bundle splits prop} and it is Griffiths negative by Remark \ref{Griffiths negative if and only if each line bundle is negative rmk}. Moreover, since $\Ric(E, h_E)=k\Ric(L, h)$, the \k metric induced by $-\Ric(E, h_E)$ is the metric $g$ given in the assumption. 
	By Theorem \ref{thm ke metric}, the unique complete \ke metric $\widetilde{g}$ with Ricci curvature $-(m+1)$ is given by \eqref{thm ke metric eq}. The explicit formula of the Cheng\textendash Yau\textendash Mok solution $u$, defined as $(\det(\widetilde{g}_{s\bar{t}}))^{-\frac{1}{m+1}}$, can be seen from the proof of Theorem \ref{thm ke metric}. Since the function $\phi$ is real analytic on $[-1, 1]$ by Proposition \ref{prpnzh2} and $G, H$ are both real analytic on $D$ by the assumption, the Cheng\textendash Yau\textendash Mok solution $u$ extends real analytically across $\Sigma$.
\end{proof}


We next prove Corollary \ref{cor homogeneous domain}.

\begin{proof}[Proof of Corollary $\ref{cor homogeneous domain}$]
	We first note that $g$ is complete since $(D, g)$ is homogeneous. (For the proof of this fact, see for example \cite{Lu}.) Moreover, the homogeneity of $(D, g)$ also implies that $g$ has constant Ricci eigenvalues. The result is now a direct consequence of Corollary \ref{cor CY solution} and the following:

\noindent
{\bf Claim.} The Ricci eigenvalues of $g$ are all negative (and thus in particular strictly less than $1$).
	\begin{proof}[Proof of Claim]
		Let $g_B$ be the Bergman metric of $D$. We denote by $\Vol(g)$ and $\Vol(g_B)$ the volume forms of $g$ and $g_B$ respectively, which are $(n, n)$ forms on $D$. Set $\Phi:=\Vol(g)/\Vol(g_B)$, which is a well-defined function on $D$. Let $\Iso(g)$ be the group of holomorphic isometries of $(\Omega, g)$. Since every biholomorphism also preserves $g_B$, the group $\Iso(g)$ actually preserves both $g$ and $g_B$. Thus, $\Phi$ is invariant under the action of $\Iso(g)$. As $\Iso(g)$ acts transitively on $\Omega$, $\Phi$ is constant on $\Omega$, that is, $\Vol(g)$ and $\Vol(g_B)$ are the same up to some (positive) constant factor. Therefore, $g$ and $g_B$ have the same Ricci form. By the fact $\Ric(g_B)=-g_B$ (see \cite{Ber70} for example), we get $\Ric(g)\cdot g^{-1}=\Ric(g_B)\cdot g^{-1}=-g_B\cdot g^{-1}$. So all the Ricci eigenvalues of $g$ are negative and the proof is completed.
	\end{proof}
\end{proof}

We now present some examples as applications of the above corollaries.
\begin{ex}
	Let $D$ be a bounded homogeneous domain in $\mathbb{C}^n$. Write $K_{D}(z, z)$ for its Bergman kernel and $g_B$ for the Bergman metric. Since $g_B$ is biholomorphic invariant, the manifold $(\Omega, g_B)$ is homogeneous K\"ahler. Given $\lambda\in \mathbb{R}^+$ and $k\in \mathbb{Z}^+$, we set $h=(K_D)^{\lambda}$ and consider the domain $\Omega\subset D\times\mathbb{C}^k$ and the hypersurface $\Sigma\subset
	D \times \mathbb{C}^k$ as defined in \eqref{ball bundle for domain case} and \eqref{sphere bundle for domain case}. By Corollary \ref{cor homogeneous domain}, the Cheng\textendash Yau\textendash Mok solution of $\Omega$ is given by the following with $m=n+k$:
	\begin{equation*}
		u(w)=k^{\frac{n}{m+1}} (GH)^{-\frac{1}{m+1}} \phi(|\xi|h^{\frac{1}{2}}),
	\end{equation*}
	where $H=(K_D)^{k \lambda}$, $G=k^n\lambda^n G_B$ and $G_B$ is the volume density of $g_B$. Moreover, the boundary hypersurface $\Sigma$ is obstruction flat and $u$ extends real analytically across $\Sigma$.
\end{ex}

\begin{ex}\label{ex bounded domain of holomorphy}
	Let $D$ be a bounded domain of holomorphy in $\mathbb{C}^n$. Suppose $g_0=\bigl((g_0)_{i\bar{j}}\bigr)$ is the complete \ke metric with negative Ricci curvature $\lambda_0$. (The existence and uniqueness of such a metric is guaranteed by the work of Mok\textendash Yau \cite{MoYau}.) Let $h$ be a real analytic function on $D$ such that $(g_0)_{i\bar{j}}=\frac{\partial^2 \log h}{\partial z_i \partial \oo{z_j}}$. (One particular choice of such an $h$ is $\bigl(\det((g_0)_{i\bar{j}})\bigr)^{-1/\lambda_0}$ as $g_0$ is K\"ahler-Einstein.) For a given $k\in \mathbb{Z}^+$, consider the domain $\Omega\subset D\times\mathbb{C}^k$ and the hypersurface $\Sigma\subset
	D \times \mathbb{C}^k$ as defined in \eqref{ball bundle for domain case} and \eqref{sphere bundle for domain case}. By Corollary \ref{cor CY solution}, the Cheng\textendash Yau\textendash Mok solution of $\Omega$ is given by the following with $m=n+k$:
	\begin{equation*}
		u=k^{\frac{n}{m+1}} (GH)^{-\frac{1}{m+1}}\phi(|\xi|h^{\frac{1}{2}}).
	\end{equation*}
	where $H=h^k$ and $G=k^n\det\bigl((g_0)_{i\bar{j}}\bigr)$. Moreover, the boundary hypersurface $\Sigma$ is obstruction flat and $u$ extends real analytically across $\Sigma$.
	
	In particular, if we choose $h=1/{u_0}$ where $u_0$ is the Cheng\textendash Yau\textendash Mok solution for the domain $D$, then $g_0=\bigl((\log h)_{i\bar{j}}\bigr)$ is the complete \k metric with Ricci curvature $\lambda_0=-(n+1)$. A routine computation yields the following expression for the Cheng\textendash Yau\textendash Mok solution of $\Omega$,
	\begin{equation*}
		u=u_0\, \phi(|\xi|h^{\frac{1}{2}}).
	\end{equation*}
\end{ex}

\begin{ex}
	Given $l\geq 1$, for each $1\leq i\leq l$, let $D_i$ be a domain in $\mathbb{C}^{n_i}$, $g^i=\sum_{p, q=1}^{n_i} g_{p\bar{q}}^i dz_{p}^i\wedge d\oo{z_q^i}$ a complete \ke metric on $D_i$, and $h^i$ a real analytic function on $D_i$ such that $g^i$ is induced by $\sqrt{-1}\partial\dbar\log h^i$. Let $D=D_1\times \cdots \times
	D_l\subset \mathbb{C}^n$ with $n=n_1+\cdots+n_l$ and write $z=(z^1, \cdots, z^l)$ for each $z^i\in D_i$. Set $h(z,\bar{z})=\Pi_{i=1}^l h^i(z^i, \oo{z^i})$. For a fixed $k\in \mathbb{Z}^+$, consider the domain $\Omega\subset D\times\mathbb{C}^k$ and the hypersurface $\Sigma\subset
	D\times \mathbb{C}^k$ as defined in \eqref{ball bundle for domain case} and \eqref{sphere bundle for domain case}. Then the Cheng\textendash Yau\textendash Mok solution of $\Omega$ is given by the following with $m=n+k$:
	\begin{equation*}
		u(w)=k^{\frac{n}{m+1}} (GH)^{-\frac{1}{m+1}} \phi(|\xi|h^{\frac{1}{2}}),
	\end{equation*}
	where $H=h^k$ and $G=k^n\Pi_{i=1}^l\det(g^i_{pq})_{1\leq p, q\leq n_i}$. Moreover, the boundary hypersurface $\Sigma$ is obstruction flat and $u$ extends real analytically across $\Sigma$.
\end{ex}

To conclude the paper, we shall prove Proposition \ref{prop egg domain}. Before proceeding to the proof, we first consider the case of the ball bundle over a bounded domain of holomorphy $D$. As mentioned in Example \ref{ex bounded domain of holomorphy}, for a given negative real number $\lambda_0$, there exists a unique complete \ke metric such that $\Ric(g_0)=\lambda_0g_0$. As before, $L$ is the trivial line bundle over $D$ and $h$ is a Hermitian metric of $L$ such that $g_0$ is induced by $-c_1(L, h)$. The Hermitian vector bundle $(E, h_E)$ is the direct sum of $k$ copies of $(L, h)$. By Example \ref{ex bounded domain of holomorphy} the Cheng\textendash Yau\textendash Mok solution for $\Omega=B(E)$ as defined in \eqref{ball bundle for domain case} is
\begin{equation}\label{CYM solution for KE case}
	u=k^{\frac{n}{m+1}} (GH)^{-\frac{1}{m+1}}\phi(|\xi|h^{\frac{1}{2}}),~~\text{with}~m=n+k.
\end{equation}
where $H=h^k$, $G=k^n\det((g_0)_{i\bar{j}})$, and the function $\phi$ is given in Proposition \ref{prpnzh2} with the $\lambda_i$'s chosen as follows.
First, it is clear that $-c_1(E, h_E)$ is $-kc_1(L, h)$, which also induces a \ke metric $g=kg_0$ with negative constant Ricci curvature $\lambda=\lambda_0/k <0$. Write $\mu=\frac{2k\lambda}{m+1}$ and $\nu=\frac{2k}{m+1}$ and let all $\lambda_i$'s in Proposition \ref{prpnzh2} be $\lambda$. The polynomials $P(y)$ and $\hat{P}(x)$ are then given by
\begin{align} \label{P for the KE case}
	P(y)=(y-\nu)^{k-1} (y-\mu)^n, \qquad \hat{P}(x)=(1-\nu x)^{k-1} (1-\mu x)^n.
\end{align}
The polynomial $Q(y)$ from Proposition \ref{prpnzh2} satisfies the following properties:
\begin{lemma}\label{lemma Q for the KE case} The following hold:
\begin{itemize}
\item[(1)] The polynomial $Q$ is divisible by $(y-\nu)^k$. Moreover, there exists a polynomial $T(y)$ such that $Q(y)=(y-\mu)^{n+1}T(y)+c$, where $c$ is a real number satisfying $c=-(\nu-\mu)^{n+1} T(\nu)=Q(\mu)$.
\item[(2)] The number $c=0$ if and only if $\lambda=-\frac{n+1}{k}$ (i.e., $\lambda_0=-(n+1)$).
	In this case, $Q= (y-\mu)^{n+1} (y-\nu)^k$.
\end{itemize}
\end{lemma}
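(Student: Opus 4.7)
The plan is to derive both parts from the single identity
\[Q'(y) \;=\; (m+1)\,y\,(y-\nu)^{k-1}(y-\mu)^{n},\]
which is immediate from the definition of $Q$ and \eqref{P for the KE case}. Note that since $\lambda_0<0$ we have $\mu<0<\nu$, so $\mu$ and $\nu$ are distinct and nonzero.

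For part (1), the factor $(y-\nu)^{k-1}$ in $Q'$ shows that $Q^{(j)}(\nu)=0$ for $j=1,\ldots,k-1$, and combined with the normalization $Q(\nu)=0$ from Proposition \ref{prpnzh2} this yields divisibility of $Q$ by $(y-\nu)^{k}$. At the other root $y=\mu$, the factor $(y-\mu)^{n}$ in $Q'$ forces $Q^{(j)}(\mu)=0$ for $1\leq j\leq n$ (and $Q^{(n+1)}(\mu)\neq 0$ since $\mu\neq 0,\nu$), so the Taylor expansion of $Q$ around $\mu$ must have the form $Q(y)=Q(\mu)+(y-\mu)^{n+1}T(y)$ for some polynomial $T$ with $T(\mu)\neq 0$. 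Setting $c:=Q(\mu)$ and substituting $y=\nu$ into this expansion (using $Q(\nu)=0$) yields $c=-(\nu-\mu)^{n+1}T(\nu)$, completing part (1).

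For part (2), if $c=0$ then $Q(y)=(y-\mu)^{n+1}T(y)$ with $\deg T=(m+1)-(n+1)=k$. Combining this with the divisibility by $(y-\nu)^{k}$ from part (1) and the fact that $Q$ is monic forces $T(y)=(y-\nu)^{k}$; hence $Q(y)=(y-\mu)^{n+1}(y-\nu)^{k}$. Differentiating this product form and cancelling the common factor $(y-\mu)^{n}(y-\nu)^{k-1}$ in the identity for $Q'(y)$ reduces the problem to the polynomial identity
\[(n+1)(y-\nu)+k(y-\mu)=(m+1)\,y,\]
which holds if and only if $(n+1)\nu+k\mu=0$. Substituting $\mu=\tfrac{2k\lambda}{m+1}$ and $\nu=\tfrac{2k}{m+1}$ and simplifying gives $\lambda=-\tfrac{n+1}{k}$, i.e., $\lambda_0=-(n+1)$. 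Conversely, when $\lambda=-\tfrac{n+1}{k}$, the polynomial $\widetilde{Q}(y):=(y-\mu)^{n+1}(y-\nu)^{k}$ satisfies $\widetilde{Q}'(y)=(m+1)yP(y)$ and $\widetilde{Q}(\nu)=0$, so by uniqueness $Q=\widetilde{Q}$ and therefore $c=Q(\mu)=0$.

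The argument is entirely elementary polynomial algebra, so no substantial obstacle is anticipated. The only technical point to verify is that $\mu$ and $\nu$ are distinct and nonzero, which is automatic from $\lambda_0<0$ and $n,k\geq 1$; this ensures that the orders of vanishing of $Q'$ at $\mu$ and $\nu$ are exactly $n$ and $k-1$ respectively, which is what drives both parts of the lemma.
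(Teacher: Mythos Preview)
Your proof is correct. Part (1) is essentially the paper's argument, just phrased via vanishing of successive derivatives rather than via term-by-term integration of the expansion of $(m+1)yP(y)$ about $\nu$ and about $\mu$; the content is the same.

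Part (2), however, is genuinely different from the paper. The paper proves the equivalence by computing directly: it writes $c=Q(\mu)=-\int_\mu^\nu (m+1)yP(y)\,dy$, splits the factor $y$ as a linear combination of $(y-\nu)$ and $(y-\mu)$, and reduces the resulting integrals to Beta functions $B(n+1,k+1)$ and $B(n+2,k)$, from which the condition $\lambda=-\tfrac{n+1}{k}$ drops out. Your argument bypasses the integral entirely: from $c=0$ you first pin down the factorization $Q=(y-\mu)^{n+1}(y-\nu)^k$ using part (1) and a degree count, and then the defining relation $Q'=(m+1)yP$ collapses to the linear identity $(n+1)(y-\nu)+k(y-\mu)=(m+1)y$, whose constant term gives $(n+1)\nu+k\mu=0$. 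This is cleaner and avoids special-function identities; the paper's route, on the other hand, would generalize more readily if one wanted the actual value of $c$ (not just whether it vanishes) or if the $\lambda_i$ were not all equal.
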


\begin{proof}
	We first prove part $(1)$. Recall by the definition in Proposition \ref{prpnzh2}, $Q$ satisfies
	\begin{equation*}
		\frac{d Q}{dy}=(m+1) y P(y) \quad \mbox{ and } \quad Q(\nu)=0.
	\end{equation*}
	It follows immediately that
	\begin{equation*}
		Q(y)=\int_{\nu}^y (m+1) tP(t) dt.
	\end{equation*}
	Note that we can write the integrand function as
	\begin{equation*}
		(m+1) y P(y)=(y-\nu)^{k-1} \sum_{j=0}^{n+1} a_j (y-\nu)^j \quad \mbox{ for some $a_j$'s in $\mathbb{R}$. }
	\end{equation*}
	We take the integration term by term and obtain
	\begin{equation*}
		Q(y)=\sum_{j=0}^{n+1} \frac{a_j}{k+j} (y-\nu)^{k+j}.
	\end{equation*}
	Therefore, $Q$ is divisible by $(y-\nu)^k$.
	
	To prove the latter assertion in part $(1)$, note that we can also write
	\begin{equation*}
		(m+1) y P(y)=(y-\mu)^n \sum_{j=0}^k b_j(y-\mu)^j \quad \mbox{ for some $b_j$'s in $\mathbb{R}$}.
	\end{equation*}
	As $Q(y)=\int_{\mu}^y (m+1) tP(t) dt+Q(\mu)$, we again take the integration term by term and obtain
	\begin{equation*}
		Q(y)=\sum_{j=0}^{k}\frac{b_j}{n+j+1} (y-\mu)^{n+j+1}+Q(\mu).
	\end{equation*}
	By setting
	\begin{equation*}
		T(y)=\sum_{j=0}^k \frac{b_j}{n+j+1} (y-\mu)^{j} \quad \mbox{and} \quad c=Q(\mu),
	\end{equation*}
	we have $Q(y)=(y-\mu)^{n+1} T(y)+c$. Since $Q(\nu)=0$, it follows that $c=-(\nu-\mu)^{n+1} T(\nu)$.
	
	Now we prove part (2). It is clear that $c=0$ if and only if $Q(\mu)=0$. Since $Q(\nu)=0$ in the assumption, the former is also equivalent to
	\begin{equation}\label{Q(mu)=0}
		\int_{\mu}^{\nu} \frac{d Q}{dy} dy=0, \quad \mbox{ i.e., } ~\int_{\mu}^{\nu} yP(y) dy=0.	
	\end{equation}
	By writing $y=\frac{\mu}{\mu-\nu}(y-\nu)-\frac{\nu}{\mu-\nu} (y-\mu)$, we have
	\begin{equation*}
		yP(y)=y(y-\nu)^{k-1} (y-\mu)^n=\frac{\mu}{\mu-\nu} (y-\nu)^k (y-\mu)^n-\frac{\nu}{\mu-\nu} (y-\nu)^{k-1} (y-\mu)^{n+1}.
	\end{equation*}
	Thus, \eqref{Q(mu)=0} is equivalent to
	\begin{equation*}
		\frac{\mu}{\mu-\nu} \int_{\mu}^{\nu}(y-\nu)^k (y-\mu)^n dy=\frac{\nu}{\mu-\nu} \int_{\mu}^{\nu}(y-\nu)^{k-1} (y-\mu)^{n+1}dy.
	\end{equation*}
	By setting $t=\frac{y-\mu}{\nu-\mu}$, it reduces to
	\begin{equation}\label{Q(mu)=0 2}
		\mu \int_0^1 (1-t)^k t^n dt=-\nu \int_0^1 (1-t)^{k-1} t^{n+1} dt.
	\end{equation}
	Recall the beta function is defined by
	\begin{equation*}
		B(p, q)= \int_0^1 t^{p-1} (1-t)^{q-1} dt
	\end{equation*}
	and $B(p, q)=\frac{(p-1)! (q-1)!}{(p+q-1)!}$. Therefore, \eqref{Q(mu)=0 2} writes into
	\begin{equation*}
		\mu \frac{k! n!}{(n+k+1)!} =-\nu \frac{(k-1)! (n+1)!}{(n+k+1)!}.
	\end{equation*}
	As $\mu=\lambda\nu$, we finally obtain that $c=0$ is equivalent to $\lambda=-\frac{n+1}{k}$.
	
	In this case, by part (1) we have both $(y-\nu)^k$ and $(y-\mu)^{n+1}$ divide $Q$. Since $Q$ is a monic polynomial of degree $n+k+1$, it follows that $Q(y)=(y-\nu)^k (y-\mu)^{n+1}$.
\end{proof}

In the special case that $P$ is given by \eqref{P for the KE case}, we will study the rationality of function $Z$ as defined in Proposition \ref{prpnzh2}.

\begin{prop}\label{prop equivalent conditions on rationality}
	Let $P$ and $\hat{P}$ be given by \eqref{P for the KE case} (and accordingly $Q$ and $\hat{Q}$ are both determined as in Proposition \ref{prpnzh2}). Let $Z$ and $\phi$ be as given in Proposition $\ref{prpnzh2}$. Then the following are equivalent.
	\begin{itemize}
		\item[(1)] $Z$ is rational.
		\item[(2)] $\phi^{m+1}$ is rational.
		\item[(3)] $\lambda=-\frac{n+1}{k}$.
	\end{itemize}

\end{prop}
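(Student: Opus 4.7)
The proof establishes (3) $\Rightarrow$ (1), the equivalence (1) $\Leftrightarrow$ (2), and finally the main implication (1) $\Rightarrow$ (3).

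For (3) $\Rightarrow$ (1): Lemma \ref{lemma Q for the KE case}(2) gives $\hat{Q}(Z) = (1-\mu Z)^{n+1}(1-\nu Z)^k$ when $\lambda = -(n+1)/k$. Combined with $\hat{P}(Z) = (1-\nu Z)^{k-1}(1-\mu Z)^n$, the ODE \eqref{eqnzodenz} reduces to $rZ' + (1-\nu Z)(1-\mu Z) = 0$. Since $\nu - \mu = 2$ in this case, separation of variables with $Z(1) = 0$ yields the rational solution $Z(r) = (1-r^2)/(\nu-\mu r^2)$.

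For (1) $\Leftrightarrow$ (2): The definition of $\phi$ in Proposition \ref{prpnzh2} together with the ODE $-Z'\hat{P}(Z) = \hat{Q}(Z)/r$ yields the identity $\phi^{m+1} = 2^{m+1} r^{2k} Z^{m+1}/\hat{Q}(Z)$, so (1) $\Rightarrow$ (2) is immediate. Conversely, setting $\psi := \phi^{m+1}$ and using $\phi'/\phi = \psi'/[(m+1)\psi]$, the relation $(m+1)rZ\phi' + (m+1-2kZ)\phi = 0$ from Proposition \ref{prpnzh2} rearranges to $Z = (m+1)\psi/[2k\psi - r\psi']$, so $\psi$ rational implies $Z$ rational.

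The main difficulty is (1) $\Rightarrow$ (3), which the plan establishes via a pullback-of-$1$-forms analysis. Setting $s = r^2$, the fact that $Z$ is rational and even in $r$ makes $Z$ a rational map $\zeta: \mathbb{P}^1_s \to \mathbb{P}^1_Z$ of some degree $d \geq 1$. The ODE then reads $\zeta^*\omega_Z = \omega_s$, where $\omega_Z := \hat{P}(Z)\,dZ/\hat{Q}(Z)$ and $\omega_s := -ds/(2s)$. Since $\omega_s$ has only simple poles at $s = 0, \infty$ with residues $-1/2$ and $+1/2$, the pullback transformation law forces (a) every pole of $\omega_Z$ to be simple, and (b) every preimage under $\zeta$ of a pole of $\omega_Z$ to lie in $\{0, \infty\}$. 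A residue-sign argument excludes $\zeta(0) = \zeta(\infty) = 1/\nu$; hence $\zeta(0) = 1/\nu$ and $\zeta(\infty) = 1/\beta$ are the two distinct poles of $\omega_Z$, each admitting a unique preimage of multiplicity $d$ (so $\zeta$ is totally ramified at $s = 0, \infty$). The simple-pole condition (a) constrains $\hat{Q}(Z)$ to factor as either $(1-\nu Z)^k(1-\mu Z)^{n+1}$ or $(1-\nu Z)^k(1-\mu Z)^n(1-\beta Z)$, and the full ramification forces $\zeta$ to be M\"obius in $s^d$, specifically $Z(s) = (1-s^d)/(\nu - \beta s^d)$.

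Substituting this ansatz into the ODE and normalizing via $\hat{Q}(0) = 1$ gives $\beta = \nu - 2d$, whence $Q(y) = (y-\nu)^k(y-(\nu-2d))(y-\mu)^n$. Matching the $y$-coefficient and the constant term in the relation $Q'(y) = (m+1)yP(y)$, and eliminating $\mu$ using $\nu = 2k/(m+1)$, produces the quadratic
\begin{equation*}
(k+n)u^2 - (2k+n)(m+1)u + k(m+1)^2 = 0
\end{equation*}
in $u := d(m+1)$. Its discriminant is $n^2(m+1)^2$, yielding the two roots $u = m+1$ and $u = k(m+1)/(k+n)$. Since $d \in \mathbb{Z}^+$ and $k/(k+n) < 1$, only $d = 1$ is admissible; with $d = 1$, the coefficient equations force $\mu = -2(n+1)/(m+1)$, equivalently $\lambda = -(n+1)/k$.
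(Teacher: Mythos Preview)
Your proof is correct. The treatments of $(3)\Rightarrow(1)$ and $(1)\Leftrightarrow(2)$ coincide with the paper's (your identity $\phi^{m+1}=2^{m+1}r^{2k}Z^{m+1}/\hat Q(Z)$ is just the definition of $\phi$ combined with the ODE, exactly as in the paper).

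For $(1)\Rightarrow(3)$ the two arguments diverge. The paper uses a short algebraic trick: with $c=Q(\mu)$ as in Lemma~\ref{lemma Q for the KE case}, the substitution $\eta=Z/(1-\mu Z)$ turns the ODE into
\[
r\eta'\bigl(1+(\mu-\nu)\eta\bigr)^{k-1}+\sum_{j=0}^{k}a_j\eta^{k-j}+c\,\eta^{m+1}=0.
\]
Writing $\eta=p/q$ in lowest terms and clearing denominators, every term except $c\,p^{m+1}$ is divisible by $q$; hence if $c\neq0$ then $q$ is constant, $\eta$ is a polynomial, and comparing degrees ($c\,\eta^{m+1}$ strictly dominates) forces $\eta$ constant --- contradicting nonconstancy of $Z$. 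Thus $c=0$, and Lemma~\ref{lemma Q for the KE case}(2) finishes.

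Your route is geometric: read the ODE as $\zeta^*\omega_Z=\omega_s$ for the rational map $\zeta\colon\mathbb P^1_s\to\mathbb P^1_Z$, use the pole/residue structure of $\omega_s$ to force $\omega_Z$ to have exactly two simple poles and no zeros, deduce the factorization of $\hat Q$ and that $\zeta$ is totally ramified over both poles (hence M\"obius in $s^d$), and finally match coefficients in $Q'=(m+1)yP$ to obtain a quadratic whose only positive-integer root is $d=1$. One small imprecision: when you say ``the simple-pole condition (a) constrains $\hat Q$ to factor as\ldots'', you are tacitly also using that $\omega_Z$ has \emph{exactly two} poles and \emph{no zeros}; both follow from (b) together with $\deg\omega_Z=-2$ on $\mathbb P^1$, but this should be said. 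With that addition the argument is complete.

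The trade-off: the paper's substitution isolates the single obstruction $c$ and kills it in three lines, whereas your method reconstructs $Q$ and $\zeta$ from scratch. Your approach is more conceptual and would transplant to settings where no convenient algebraic substitution is apparent; the paper's is much shorter here because Lemma~\ref{lemma Q for the KE case} has already packaged the obstruction as a single number.
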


\begin{rmk}
	Moreover,  when (1)-(3) in Proposition \ref{prop equivalent conditions on rationality} hold, we can see from the proof that $\mu=-\frac{2(n+1)}{m+1}$ and $Z=\frac{1-r^2}{2+\mu-\mu r^2}.$ Consequently, $\phi(r)=1-r^2$.
\end{rmk}

\begin{proof}[Proof of Proposition $\ref{prop equivalent conditions on rationality}$]	We first prove (1) is equivalent to (2). By Proposition \ref{prpnzh2}, we have
	\begin{equation*}
		\frac{1}{Z}=\frac{2k}{m+1}-r\frac{\phi'}{\phi}=\frac{2k}{m+1}-\frac{r}{m+1}\frac{(\phi^{m+1})'}{\phi^{m+1}}.
	\end{equation*}
Hence (2) implies (1). Conversely, recall that in Proposition \ref{prpnzh2} the function $\phi$ is defined by $		\phi=2\Bigl(\frac{r^{2k-1}}{-Z'\hat{P}(Z)}\Bigr)^{\frac{1}{m+1}} Z$. Since $\hat{P}$ is a polynomial, 
the rationality of $Z$ implies that of $\phi^{m+1}$.

Now it remains to show (1) is equivalent to (3). We shall first show that (3) implies (1). Suppose $\lambda=-\frac{n+1}{k}$. Then by Lemma \ref{lemma Q for the KE case} we get $Q(y)=(y-\mu)^{n+1}(y-\nu)^k$. Thus, $\hat{Q}(x)=(1-\mu x)^{n+1} (1-\nu x)^k$. Recall $\hat{P}=(1-\mu x)^{n}(1-\nu x)^{k-1}$ as given in \eqref{P for the KE case}. Since $Z$ satisfies $rZ'\hat{P}(Z)+\hat{Q}(Z)=0$ for $r\in [-1, 1]$, it follows that
	\begin{equation*}
		\frac{Z'}{(1-\mu Z)(1-\nu Z)}=-\frac{1}{r} \quad \mbox{ for } r\in (0, 1].
	\end{equation*}
	As $Z(1)=0$, by writing $\frac{1}{(1-\mu Z) (1-\nu Z)}=\frac{1}{\mu-\nu} \bigl(\frac{\mu}{1-\mu Z}-\frac{\nu}{1-\nu Z} \bigr)$ and integrating the above equation, we obtain
	\begin{equation*}
		\frac{1}{\mu-\nu}\Bigl(-\ln(1-\mu Z)+\ln(1-\nu Z) \Bigr)=-\ln r \quad \mbox{ for } r\in (0, 1].
	\end{equation*}
	Since $\mu-\nu=\frac{2k\lambda-2k}{m+1}=-2$, we get $\ln \frac{1-\nu Z}{1-\mu Z}=\ln (r^2)$, and further simplification yields
	\begin{equation*}
		Z=\frac{1-r^2}{\nu-\mu r^2}=\frac{1-r^2}{2+\mu-\mu r^2}.
	\end{equation*}
	It is clear that $Z$ is a rational function.
	
	Last we check that (1) implies (3). Suppose that $Z$ is rational. Recall $\hat{P}(x)=(1-\mu x)^{n}(1-\nu x)^{k-1}$. By Lemma \ref{lemma Q for the KE case}, we have $Q(y)=(y-\mu)^{n+1} T(y)+c$ where $T$ is some polynomial of degree $k$ and $c$ is some real number. By writing $T(y)=\sum_{j=0}^k a_j (y-\mu)^j$ for some $a_j\in \mathbb{R}$, we then have
	\begin{equation*}
		Q(y)=\sum_{j=0}^k a_j (y-\mu)^{n+1+j}+c \quad \mbox{ and } \quad \hat{Q}(x)=\sum_{j=0}^k a_j (1-\mu x)^{n+1+j} x^{k-j}+cx^{m+1}.
	\end{equation*}
Recall $rZ'\hat{P}(Z)+\hat{Q}(Z)=0$ for $r\in [-1, 1]$. We divide the equation by $(1-\mu Z)^{m+1}$ to obtain
	\begin{equation*}
		r\frac{Z'(1-\nu Z)^{k-1}}{(1-\mu Z)^{k+1}}+\sum_{j=0}^k a_j \frac{Z^{k-j}}{(1-\mu Z)^{k-j}}+c\frac{Z^{m+1}}{(1-\mu Z)^{m+1}}=0 \quad \mbox{ for } r\in [-1, 1].
	\end{equation*}
	Set $\eta(r)=\frac{Z(r)}{1-\mu Z(r)}$ for $r\in [-1, 1]$. Then $\eta'=\frac{Z'}{(1-\mu Z)^2}$ and $\frac{1-\nu Z}{1-\mu Z}=1+(\mu-\nu)\eta$. Therefore, we can rewrite the above equation into
	\begin{equation}\label{ODE on eta}
		r\eta'\bigl(1+(\mu-\nu)\eta\bigr)^{k-1}+\sum_{j=0}^k a_j \eta^{k-j}+ c\eta^{m+1}=0.
	\end{equation}	
	As $Z$ is rational, so is $\eta$. We can write $\eta=\frac{p}{q}$ for some coprime polynomials $p$ and $q$. Putting this into \eqref{ODE on eta} and multiplying the equation by $q^{m+1}$, we obtain
	\begin{equation*}
		r (p'q-pq') \bigl(q+(\mu-\nu)p\bigr)^{k-1} q^{n}+\sum_{j=0}^k a_j p^{k-j} q^{n+1+j}+cp^{m+1}=0
	\end{equation*}
	Assume $c\neq 0$. Then $p^{m+1}$ is divisible by $q$. As $p$ and $q$ are coprime, $q$ is a constant. Without losing of generality, we can assume $q=1$ and thus $\eta$ is just the polynomial $p$. Now that all terms in \eqref{ODE on eta} are polynomials, we can count their degrees. Note $c\eta^{m+1}$ is of degree $(m+1)\deg p$ while all other terms on the left hand side of \eqref{ODE on eta} are of degree less than or equal to $k \deg p$. Therefore, we have $\deg p=0$, that is, $p$ is a constant. So are the functions $\eta$ and $Z$. 
This is a contradiction as $Z$ is not constant by Proposition \ref{prpnzh2}. Hence we must have $c=0$. By Lemma \ref{lemma Q for the KE case}, (3) holds. So the proof is completed.
\end{proof}

We are now ready to prove Proposition \ref{prop egg domain}.
\begin{proof}[Proof of Proposition $\ref{prop egg domain}$]
	Let $D$ be the complex $n$-dimensional unit ball $\{z\in \mathbb{C}^n: |z|<1\}$. We introduce the function $h=(\frac{1}{1-|z|^2})^{1/p}$ and the metric $g_0=(g_0)_{i\bar{j}}=\bigl((\log h)_{i\bar{j}}\bigr)$. Note that $g_0$ is just $\frac{1}{p(n+1)}$ multiple of the Bergman metric of $D$. Thus $g_0$ is the complete \ke metric on $D$ with Ricci curvature equal to $\lambda_0=-p(n+1)$. If we take $g=k g_0$, then $g$ is the complete \ke metric on $D$ with Ricci curvature equal to $\lambda=\lambda_0/k$.
Recall the domain $\Omega$ defined in \eqref{ball bundle for domain case}, which now becomes
	\begin{equation*}
		\Omega=\bigl\{(z, \xi)\in D\times \mathbb{C}^k: \bigl(\frac{1}{1-|z|^2}\bigr)^{1/p} |\xi|^2<1 \bigr\}.
	\end{equation*}
	Clearly, we have $\Omega=E_p$. For $m=n+k$, by Example \ref{ex bounded domain of holomorphy}, the Cheng\textendash Yau\textendash Mok solution for domain $\Omega$ is given by
	\begin{equation*}
		u=k^{\frac{n}{m+1}}(GH)^{-\frac{1}{m+1}}\phi(|\xi|h^{\frac{1}{2}}),
	\end{equation*}
	where $G=\det(g_{i\bar{j}})$ and $H=h^k$. A straightforward computation yields $G=\frac{k^n}{p^n(1-|z|^2)^{n+1}}$, and thus
	\begin{equation}\label{CYM solution for egg domain}
		u=p^{\frac{n}{m+1}} (1-|z|^2)^{(n+1+\frac{k}{p})/(m+1)} \phi\bigl(|\xi|(1-|z|^2)^{-\frac{1}{2p}}\bigr).
	\end{equation}
	
	On the other hand, the Bergman kernel $K$ of $E_p$ was computed by D'Angelo \cite{DA94}:
	\begin{equation}\label{Bergman kernel for egg domain}
		K\bigl((z,\xi), (z, \xi)\bigr)=\sum_{i=0}^{n+1} c_i\frac{(1-|z|^2)^{-(n+1)+\frac{i}{p}}}{\big((1-|z|^2)^{\frac{1}{p}}-|\xi|^2 \big)^{k+i}},
	\end{equation}
	where $c_i$ are constants depending on $i, n, k$ and $p$.
	
To establish Proposition \ref{prop egg domain}, 
we assume the Bergman metric $g_{\mathrm{B}}$ of $E_p$ is K\"ahler-Einstein and first follow the work of Fu\textendash Wong \cite{FuWo} to compute the volume form of $g_{\mathrm{B}}$.  Note that a generic boundary point of $E_p$ is smooth and  strictly pseudoconvex (indeed spherical). Take an arbitrary strictly pseudoconvex boundary point $(z_0, \xi_0)$. By using Fefferman's expansion for the Bergman kernel near $(z_0, \xi_0)$ and the argument in Cheng--Yau (\cite{CheYau}, page 510), we deduce that the Ricci curvature of $g_{\mathrm{B}}$ at $(z, \xi) \in E_p$ tends to $-1$ as $(z, \xi)$ approaches $(z_0, \xi_0)$. Thus the K\"ahler-Einstein assumption implies that the Ricci curvature of $g_{\mathrm{B}}$ is equal to $-1$. Then by Proposition 1.2 in \cite{FuWo}, the determinant of $g_{\mathrm{B}}$ equals the Bergman kernel up to a positive constant multiple. On the other hand, the volume form of two  complete \ke metrics of negative Ricci curvature, $\det\bigl( (-\log u)_{s\bar{t}} \bigr)$ and the determinant of $g_{\mathrm{B}}$,  on $E_p$ can only differ by a positive constant multiple.
	As a result, we have  $u^{-(m+1)}=c K$ for some constant $c >0$. Combining this with \eqref{CYM solution for egg domain} and \eqref{Bergman kernel for egg domain}, we obtain
	\begin{equation*}
		p^{-n} (1-|z|^2)^{-(n+1+\frac{k}{p})} \phi^{-(m+1)}\bigl(|\xi|(1-|z|^2)^{-\frac{1}{2p}}\bigr)=c\sum_{i=0}^{m} c_i\frac{(1-|z|^2)^{-(n+1)+\frac{i}{p}}}{\big((1-|z|^2)^{\frac{1}{p}}-|\xi|^2 \big)^{k+i}}.
	\end{equation*}
	After simplification, this becomes
	\begin{equation*}
		\phi^{-(m+1)}\bigl(|\xi|(1-|z|^2)^{-\frac{1}{2p}}\bigr)=c\,p^n \sum_{i=0}^{m} c_i\frac{1}{\big(1-|\xi|^2(1-|z|^2)^{-\frac{1}{p}} \big)^{k+i}}.
	\end{equation*}
	By setting $r=|\xi|(1-|z|^2)^{-\frac{1}{2p}}$, we observe $\phi^{-(m+1)}(r)$ is equal to $c\,p^n \sum_{i=0}^{m} c_i\big(1-r^2 \big)^{-(k+i)}$. Thus, $\phi^{m+1}$ is rational. By Proposition \ref{prop equivalent conditions on rationality}, we get $\lambda=-\frac{n+1}{k}$. Recall that $\lambda=\frac{\lambda_0}{k}=-\frac{p(n+1)}{k}$. So it follows that $p=1$ and the proof is completed.
\end{proof}

\bibliographystyle{plain}
\bibliography{references}

\end{document}